\Crefname{appsec}{Appendix}{Appendices}
\numberwithin{equation}{section}
\newtheorem{thm}{Theorem}[section]
\newtheorem{lem}[thm]{Lemma}
\newtheorem{prop}[thm]{Proposition}
\newtheorem{cor}[thm]{Corollary}
\newtheorem{introtheorem}{Theorem}
\theoremstyle{definition}
\newtheorem{ex}[thm]{Example}
\theoremstyle{definition}
\newtheorem{notation}[thm]{Notation}
\theoremstyle{definition}
\newtheorem{defn}[thm]{Definition}
\theoremstyle{remark}
\newtheorem{rmk}[thm]{Remark}
\DeclareMathOperator*{\unprod}{\scalerel*{\underline{\prod}}{\sum}}
\newcommand{\br}{\underline{\cdot}}
\newcommand{\inner}[1]{\left\langle #1 \right\rangle}
\newcommand{\Set}[1]{\left\lbrace #1 \right\rbrace}
\newcommand{\un}[1]{\underline{#1}}
\newcommand{\stirling}[2]{\left[\genfrac{}{}{0pt}{}{#1}{#2}\right]}
\newcommand{\lMod}[1]{#1\text{-}\mathsf{Mod}}
\newcommand{\flkmod}[2]{#1\text{-}\mathsf{mod}_{#2}}
\newcommand{\flRmod}[2]{#1\text{-} \mathsf{proj}_{#2}}
\newcommand{\rComod}[1]{\mathsf{Comod}\text{-}#1}
\newcommand{\flmod}[1]{#1\text{-}\mathsf{proj}}
\newcommand{\fplmod}[1]{#1\text{-}\mathsf{mod}}
\newcommand{\op}{\mathrm{op}}
\newcommand{\colim}{\operatorname{colim}}
\newcommand{\cop}{\mathrm{cop}}
\newcommand{\Hom}{\operatorname{Hom}}
\newcommand{\iHom}{\underline{\operatorname{Hom}}}
\newcommand{\ev}{\operatorname{ev}}
\newcommand{\Fun}{\operatorname{Fun}}
\newcommand{\coev}{\operatorname{coev}}
\newcommand{\uHom}{\underline{\operatorname{Hom}}}
\newcommand{\uEnd}{\underline{\operatorname{End}}}
\newcommand{\Id}{\operatorname{Id}}
\newcommand{\lex}{\mathrm{lex}}
\newcommand{\Nat}{\operatorname{Nat}}
\newcommand{\one}{\mathds{1}}
\newcommand{\Vect}{\mathsf{Vect}_\Bbbk}
\newcommand{\cC}{\mathcal{C}}
\newcommand{\cD}{\mathcal{D}}
\newcommand{\cE}{\mathcal{E}}
\newcommand{\cP}{\mathcal{P}}
\newcommand{\cR}{\mathcal{R}}
\newcommand{\cT}{\mathcal{T}}
\newcommand{\wcC}{\widehat{\mathcal{C}}}
\newcommand{\wcD}{\widehat{\mathcal{D}}}
\newcommand{\wF}{\widehat{F}}
\newcommand{\wcP}{\widehat{\mathcal{P}}}
\newcommand{\mI}{\mathbb{I}}
\newcommand{\mC}{\mathbb{C}}
\newcommand{\mK}{\mathbb{K}}
\newcommand{\mQ}{\mathbb{Q}}
\newcommand{\mO}{\mathbb{O}}
\newcommand{\mZ}{\mathbb{Z}}
\newcommand{\mN}{\mathbb{Z}}
\newcommand{\dq}[1]{\operatorname{det}_q(#1)}
\newcommand{\dnu}[1]{\operatorname{det}_\nu(#1)}
\newcommand{\brdq}[1]{\operatorname{\underline{det}}_q(#1)}
\newcommand{\brdeps}[1]{\operatorname{\underline{det}}_\epsilon(#1)}
\newcommand{\ov}[1]{\overline{#1}}
\newcommand{\scalar}{\sigma}
\newcommand{\monom}{\operatorname{mon}}
\newcommand{\intf}{\textrm{int}}
\title{Presentations for small reflection equation algebras of type A}
\author[J. Cooke]{Juliet Cooke}
\author[R. Laugwitz]{Robert Laugwitz}
\address{School of Mathematical Sciences,
University of Nottingham, University Park, Nottingham, NG7 2RD, UK}
\email{robert.laugwitz@nottingham.ac.uk}
\date{June 12, 2025}
\begin{document}

\begin{abstract}
We give presentations, in terms of the generators and relations, for the reflection equation algebras of type $GL_n$ and $SL_n$, i.e., the covariantized algebras of the dual Hopf algebras of the small quantum groups of $\mathfrak{gl}_n$ and $\mathfrak{sl}_n$. Our presentations display these algebras as quotients of the infinite-dimensional reflection equation algebras of types $GL_n$ and $SL_n$ by identifying additional relations that correspond to twisting the nilpotency and unipotency relations of the finite-dimensional quantum function algebras. The presentations are valid for appropriately defined integral forms of these algebras.
\end{abstract}

\subjclass[2020]{Primary 17B37; Secondary 16T05, 18M05, 18M15}
\keywords{Reflection equation algebra, covariantized Hopf algebra, small quantum group}

\maketitle

\vspace{-10pt}

\section{Introduction}

The reflection equation algebra is a fundamental structure in quantum algebra where it arises in several contexts. 
In quantum integrable systems, the reflection equation is a consistency equation, analogous to the quantum Yang--Baxter equation, but involving reflection at a wall, see \cite{Che}, \cite{Kul} and references therein. The \emph{reflection equation (RE) algebra} is an algebra associated to the reflection equation \cite{KuS}. The RE algebra has a K-matrix that defines a braided module category structure  on its representations \cites{KoSt,Kol}. In fact, the RE algebra is the universal (co)module algebra with a K-matrix \cites{BBJ2,LWY}.

The RE algebra can be constructed from the \emph{FRT algebra} (after Fadeev--Reshe\-tikhin--Takhtajan \cites{FRT,RTF}), also called the \emph{quantum function algebra}, by a cocycle twist \cites{DM,DKM} or by \emph{transmutation} \cites{Maj1,Majid95book}. To state the twisting result, the FRT algebra $A_\cC$ is viewed as a bimodule algebra over a quasitriangular Hopf algebra $H$ with R-matrix $R$, i.e., as an algebra in $\cC^\op\boxtimes \cC$ for the braided monoidal category $\cC$ of representations of $H$. Then, the RE algebra $B_\cC$ can be constructed the $2$-cocycle twist of $A_\cC$ by $R_{13}R_{23}$ as a bimodule algebra \cite{DM}. To obtain the transmutation description of $B_\cC$, $A_\cC$ is realized as the dual $H^\circ$ of the Hopf algebra $H$. Then, the product of $A_\cC$ is changed to the covariantized product 
\begin{align}
a \br b &=  a_{(2)} b_{(3)} \cR\left(a_{(3)} \otimes S \left(b_{(1)}\right)\right) \cR\left(a_{(1)} \otimes b_{(2)}\right),
\end{align}
of \cite{Majid95book}*{Section~7.4}, for the dual R-matrix $\cR$. This equips $B_\cC$ with the structure of a Hopf algebra \emph{internal} to $\cC$ \cite{Majid95book}*{Section~9.4}.

Using category theory, there are different universal constructions used to define the FRT and RE algebras associated to a ribbon category $\cC$. The first universal construction uses the coend 
$$
B_\cC=\int^{X\in \cC} X^*\otimes X
$$
of Lyubashenko \cites{Lyu1,Lyu2,LM}, using the tensor product $\otimes$ of $\cC$ and the left dual $X^*$. This universal construction also reveals structure of $B_\cC$ as a Hopf algebra internal to $\cC$ in a natural way. The FRT algebra has a similar definition as 
$$A_\cC=\int^{X\in \cC} X^*\boxtimes X,$$
constructed as an algebra object in the Deligne tensor product $\cC^\op\boxtimes \cC$, see Section \ref{sec:AC-coend}. If is a category of representations over a Hopf algebra $H$, both $A_\cC$ and $B_\cC$ can be modelled on the (finitary) dual $H^\circ$ but their algebra structures differ.
More generally, $B_\cC\cong T(A_\cC)$ is the image of $A_\cC$ under the tensor functor $T\colon \cC^\op\boxtimes \cC\to \cC$ obtained from the tensor product of $\cC$, see \cite{EGNO}*{Exercise~8.25.7}. Equivalently, $B_\cC$ can be defined, via a generalization of Tannaka--Krein reconstruction \cite{Majid95book}*{Section~9} called \emph{braided reconstruction}, as the object representing the functor 
$$\Nat(\Id_\cC,\Id_\cC\otimes (-))\colon \cC\to \Vect, \quad \text{i.e.,}
\quad 
\Hom_{\cC}(B_\cC,Y)\cong \Nat(\Id_\cC,\Id_\cC\otimes Y),
$$
for objects $Y$ of $\cC$. The FRT algebra can also be constructed as the \emph{internal} endomorphism algebra $\uEnd(\one)$ of the tensor unit with respect to the regular $\cC$-bimodule structure on $\cC$ and is called the \emph{canonical algebra} in \cites{EGNO}.
The relationship between these universal constructions is known but we review them, with coherent choices of conventions, in Section \ref{sec:FRT-RE-algebra}. To ensure existence of the RE and FRT algebras via these universal constructions, it is, in general, necessary to pass to the completion $\wcC$ of $\cC$ under directed colimits, see Appendix~\ref{sec:appendix}.

The RE algebra, with its universal property as a coend, is of central importance in the construction of invariants of links and $3$-dimensional cobordisms from tensor categories, see e.g. \cites{Lyu1,TV,DGGPR}. 
Moreover, the reflection equation algebra $B_\cC$ appeared as a key tool in the algebraic evaluation of factorization homology on punctured surfaces obtained by gluing handles \cites{BBJ,BBJ2}. The factorization homology is then equivalent to the category of modules over a braided tensor product of copies of $B_\cC$ internal to $\cC$ determined by the surface \cite{BBJ}*{Theorem~5.14}. This application motivates giving an explicit presentation in order to compute factorization homology by classifying representations of the braided products of copies of $B_\cC$. 

The main cases of interest of the FRT and RE algebra constructions consider the ribbon category $\cC$ of finite-dim\-ensio\-nal type I representations of a quantized enveloping algebra $U_q(\mathfrak{g})$. For a generic parameter $q$, the FRT and RE algebras are quadratic algebra and presentations by generators and relations are well-known, see e.g. \cites{Majid95book,DL,JW} for details.
 The FRT algebra $A_\cC$ is isomorphic to the quantum function algebra $O_q(G)$, which has generators $x_i^j$, and the RE algebra is denoted by $B_q(G)$ with generators $u_i^j$, where $i,j=1,\ldots, r=\operatorname{rank}\mathfrak{g}$.  
We review these presentations, for a suitable integral form of these algebras, in Section~\ref{sec:q-fun-alg} for the FRT algebras and in Section~\ref{sec:generic-RE} for the RE algebra. In the $GL_n$-case, and for integral forms, the presentation of the FRT algebra is obtained from \cite{Tak} where non-degenerate pairings with the (integral form of) the corresponding universal enveloping algebras are constructed. These non-degenerate pairings are key to identifying the FRT algebras with the abstractly defined coends. 

In this paper, we investigate the analogues of the FRT and RE algebras specializing $q$ to a primitive $\ell$-th root of unity $\epsilon$, where the order $\ell$ is odd and $\cC$ is the category of representations of the small quantum group $u_\epsilon(\mathfrak{g})$. We restrict ourselves to type A, where $G=SL_n$ (or $GL_n$) and $\mathfrak{g}=\mathfrak{sl}_n$ (or $\mathfrak{gl}_n$). In these cases, the finite-dimensional FRT algebra, called \emph{small quantum function algebra}, is denoted by $o_\epsilon(G)$ (see Definitions~\ref{pres-oqgln} and \ref{pres-oqsln}). Its generators $x_i^j$ are subject to the usual relations from the generic case as well as 
\begin{equation}\label{eq:intro-small-o}
\left(x^i_j\right)^\ell\quad (1\leq i\neq j\leq n),\quad \text{and}\quad  \left(x^i_i\right)^{\ell}-1.
\end{equation}

\subsection{Statement of results}

The main result gives the following presentations for the finite-dimensional (\emph{small}) reflection equation algebras $b_\epsilon(G)$ associated to $G=GL_n, SL_n$ for a root of unity $\epsilon$ of odd order $\ell>2$.\footnote{The assumption that $\ell$ is odd ensures existence of a braiding on the categories of $u_q(\mathfrak{g})$-modules and is needed for certain results of Section~\ref{sec:qgroup-defns}.}

\begin{introtheorem}[See Theorem~\ref{thm:main-theorem}]
The algebra $b_\epsilon(GL_n)$ is generated by $u^k_l$ for all $1\leq l,k\leq n$ subject to the relations \eqref{eq:BqMn-rel-1}--\eqref{eq:BqMn-rel-4} as well as the additional relations 
\begin{align}
\left(u^k_l\right)^{\br \ell}&=0,\\
\sum_{\lambda\models \ell} \scalar_\epsilon (\lambda) \sum_{(\beta_1,\ldots, \beta_{\ell+1})\in V^k(\lambda)} u^{\beta_1}_{\beta_2}\br \ldots \br u^{\beta_\ell}_{\beta_{\ell+1}}&=1,\label{eq:introformula}
\end{align}
for all $1\leq k\neq l\leq n$. The algebra $b_\epsilon(SL_n)$ is the quotient of $b_\epsilon(GL_n)$ by the additional relation
\begin{align}\label{eq:intro-det}
\brdeps{n}=\sum_{\sigma \in S_n} (-\epsilon)^{l(\sigma)}\epsilon^{e(\sigma)}u^{n}_{\sigma(n)}\br \ldots \br u^1_{\sigma(1)}&=1,
\end{align}
where $l(\sigma)$ is the number of inversions in $\sigma$ and $e(\sigma)=|\Set{i=1,\ldots, n~|~ \sigma(i)>i}|$.
\end{introtheorem}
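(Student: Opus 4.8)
The plan is to transport the presentation of the small quantum function algebra $o_\epsilon(G)$ through the covariantization that turns the FRT algebra into the reflection equation algebra. Recall from the introduction that $b_\epsilon(G)=T(o_\epsilon(G))$ for the tensor functor $T\colon\cC^\op\boxtimes\cC\to\cC$, so that $o_\epsilon(G)$ and $b_\epsilon(G)$ share a common underlying coalgebra — the integral form of $H^\circ$ — on which the FRT product and the covariantized product
\[
a\br b = a_{(2)}b_{(3)}\,\cR\!\left(a_{(3)}\otimes S(b_{(1)})\right)\cR\!\left(a_{(1)}\otimes b_{(2)}\right)
\]
are two different multiplications, the generators $x^k_l$ and $u^k_l$ being the same coalgebra elements. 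Writing $o_\epsilon(G)=O_q(G)/J$, where $J$ is the Hopf ideal generated by the relations \eqref{eq:intro-small-o} of \cref{pres-oqgln,pres-oqsln}, the first step is to observe that transmutation carries this Hopf-algebra quotient to a braided Hopf-algebra quotient $B_q(G)\twoheadrightarrow b_\epsilon(G)$ with the same underlying kernel $J$. Since the generic relations \eqref{eq:BqMn-rel-1}--\eqref{eq:BqMn-rel-4} are exactly the transmutation of the defining FRT relations and are supplied by \cref{sec:generic-RE}, the theorem reduces to rewriting a generating set of $J$ in terms of the product $\br$.

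The technical core is a straightening formula expanding an $\ell$-fold FRT power of a generator into covariantized monomials. I would derive it by iterating the covariantization formula together with the matrix coproduct $\Delta(x^k_l)=\sum_m x^k_m\otimes x^m_l$, so that the $\ell$-fold coproduct of a diagonal generator ranges over index paths $(\beta_1,\ldots,\beta_{\ell+1})$ while the accumulated evaluations of $\cR$ and $S$ collect, according to the pattern of repeated indices recorded by a composition $\lambda\models\ell$, into the scalar $\scalar_\epsilon(\lambda)$, the admissible paths forming the sets $V^k(\lambda)$. Matching this expansion against $(x^k_k)^{\ell}=1$ then yields \eqref{eq:introformula}. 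For the off-diagonal generators I expect the straightening to degenerate: I would show that the covariantized power $(u^k_l)^{\br\ell}$ reduces to a scalar multiple of the FRT power $(x^k_l)^{\ell}$, so that the nilpotency $(x^k_l)^{\ell}=0$ transports directly to $(u^k_l)^{\br\ell}=0$. The $SL_n$ case is handled in the same spirit, by showing that the covariantization of the quantum determinant $\det_q$ is precisely the braided determinant $\brdeps{n}$, whence $\det_q=1$ becomes \eqref{eq:intro-det}.

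To conclude that the listed relations present $b_\epsilon(G)$, rather than merely holding in it, I would use that $\br$-multiplication by an element of $B_q(G)$ differs from FRT multiplication only by invertible, internally supplied $R$-matrix factors; consequently, for the Hopf ideal $J$ the two-sided $\br$-ideal and the two-sided FRT ideal generated by the same relation elements coincide, so the $\br$-ideal generated by the rewritten relations is exactly $J$ and $B_q(G)$ modulo these relations returns $b_\epsilon(G)$, with the matching dimension guaranteed by the coalgebra identification above. The step I expect to be the main obstacle is the diagonal straightening: faithfully tracking the nested $\cR$- and $S$-evaluations through the iterated covariantization and proving that they package into the closed scalar $\scalar_\epsilon(\lambda)$ over exactly the path sets $V^k(\lambda)$. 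Once this bookkeeping is under control, the off-diagonal vanishing, the determinant identity, and the completeness argument should follow with comparatively little additional effort.
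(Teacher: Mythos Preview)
Your proposal is correct and follows essentially the same strategy as the paper: transport the additional relations of $o_\epsilon(G)$ through the twisting map $\Psi$ (which is the identity on the common underlying coalgebra), with the off-diagonal powers collapsing to a scalar multiple of $(u^k_l)^{\br\ell}$, the diagonal powers requiring the main combinatorial work, the determinant handled by the Jordan--White formula, and completeness coming from the fact that the kernel is a Hopf ideal and hence simultaneously an ideal for both products. The only organizational difference is that the paper structures the diagonal computation through a one-step recursion
\[
\Psi\bigl((x^k_k)^N x^k_l\bigr)=\Psi\bigl((x^k_k)^N\bigr)\br u^k_l+(1-q^{-2N})\sum_{\beta<k}\Psi\bigl((x^k_k)^{N-1}x^k_\beta\bigr)\br u^\beta_l,
\]
unwound by induction on $N$, rather than a direct expansion of the iterated covariantization formula; the bookkeeping that produces the compositions $\lambda$ and the sets $V^k(\lambda)$ is the same.
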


Equation \eqref{eq:introformula} is a closed combinatorial formula involving the following notation. 
\begin{itemize}
\item A \emph{composition} of \emph{weight} $\ell$ is a sequence $\lambda=(\lambda_1,\ldots, \lambda_k)\in \mZ_{\geq 1}^k$ of positive integers such that $\sum_{i=1}^k \lambda_i=\ell$. Its length is $|\lambda|=k$.
\item We define the \emph{$\epsilon$-scalar} associated to the composition $\lambda$,
    \begin{align}
        \scalar_\epsilon(\lambda)= \frac{\prod_{j=1}^{\ell-1} \left( 1 - \epsilon^{-2(\ell-j)} \right)}{ \prod_{k=1}^{|\lambda|-1} \left( 1 - \epsilon^{-2 \left( \ell  - \sum_{j=1}^{k} \lambda_{|\lambda|+1-j} \right)} \right)}.
    \end{align}      
We shown in Lemma \ref{lem:lambda-scalar} these $\epsilon$-scalars satisfies the recursion
$$ \scalar_\epsilon\left(\lambda\right)= \Big(\prod_{j=1}^{\lambda_{|\lambda|-1}} \big( 1 - \epsilon^{-2(\ell-j)} \big) \Big) \scalar_\epsilon (\lambda_1,\ldots, \lambda_{|\lambda|-1})\quad \in\mZ[\epsilon,\epsilon^{-1}].$$
\item The indexing set of the second summation is 
$$V^k(\lambda)=\left\{(\beta_1,\ldots, \beta_{\ell+1})\in \{1,\ldots, k\}~\middle| ~\beta_{\sum_{i=1}^x \lambda_i + 1} = k, \text{ for $x=0,\ldots,|\lambda|$}\right\}.$$
\end{itemize}

The theorem is proved by twisting the additional relations \eqref{eq:intro-small-o} of the small quantum function algebras $A=o_\epsilon(GL_n)$ or $o_\epsilon(SL_n)$ via a certain \emph{twisting map}
\begin{equation}
\Psi\colon A\to \un{A}, \quad \Psi(1)=1, \quad \Psi\left(x^i_j\right)=u^i_j,
\end{equation}
which fixes the generators, see \eqref{eq:twistingmap}. This approach is commonly used to obtain presentations for RE algebras at generic parameters \cites{KS,JW}.
It turns out that most additional, non-quadratic, relations twist in an easy way but twisting the relation $(x_k^k)^\ell=1$ leads to the combinatorial formula in Equation~\eqref{eq:introformula}. 
The formula for the quantum determinant of the RE algebra, used in Equation \eqref{eq:intro-det} is obtained from the results of \cite{JW} who computed the center of the reflection equation algebra at generic $q$.

When either $n$ or $\ell$ are small, the presentation simplifies.
In the case when $n=2$, Equation~\eqref{eq:introformula} becomes 
\begin{align}
\sum_{\lambda\models \ell} \scalar_\epsilon (\lambda) \monom(\lambda_{|\lambda|})\br \monom(\lambda_{|\lambda|-1})\br \ldots \br \monom(\lambda_1)&=1,
\end{align}
where for an integer $l\geq 1$, and $a=u^1_1$, $b=u^1_2$, $c=u^2_1$, $d=u^2_2$,
$$\monom(l)=\begin{cases}
d, & \text{if $l=1$,}\\
c\br a^{\br (l-2)}\br b, & \text{if $l>1$.}
\end{cases}$$
The presentations for $b_\epsilon(GL_2)$ and $b_\epsilon(SL_2)$ with this simplified combinatorial formula are obtained in Corollary~\ref{cor:bepGL2-SL2}.

The complexity of Equation~\eqref{eq:introformula} grows exponentially in the order of $\ell$. For $\ell=3$ it is still easy to compute all coefficients in a formula. For any $n\geq 2$, the sets $V^k(\lambda)$ from Definition \ref{def:k-extension} are in this case
\begin{gather*}
V^k(1,1,1)=\Set{(k,k,k,k)},\qquad V^k(1,2)=\Set{(k,k,i,k)~|~1\leq i< k}, \\ V^k(2,1)=\Set{(k,i,k,k)~|~1\leq i< k},\quad
V^k(3)=\Set{(k,i,j,k)~|~1\leq i,j< k}
\end{gather*}
and we compute
\begin{align}
\begin{split}
1=\left(u^k_k\right)^{\br 3}&+\left(1-\epsilon^{-2}\right) \sum_{i<k} u^k_i \br u^i_k\br u^k_k + \left(1-\epsilon^{-4}\right)\sum_{i<k} u^k_k\br u^k_i\br u^i_k \\ &+ \left(1-\epsilon^{-2}\right)\left(1-\epsilon^{-4}\right)\sum_{i,j<k}u^k_i\br u^i_j\br u^j_k.
\end{split}
\end{align}
This can be combined with Lemma~\ref{lem:BqMn-relations} (a presentation for the quadratic algebras $B_q(GL_n)$) 
to give a full presentation for $b_{\epsilon}(GL_n)$. If, for example, $n=3$, adding the relation
\begin{align*}
1=\brdeps{3}=&~u^3_3\br u^2_2 \br u^1_1- q^{2}u^3_3\br u^2_1 \br u^1_2- q^{2}u^3_1\br u^2_2 \br u^1_3\\
&- q^{2}u^3_2\br u^2_3 \br u^1_1+ q^{3}u^3_2\br u^2_1 \br u^1_3+ q^{4}u^3_1\br u^2_3 \br u^1_2.
\end{align*}
gives a full presentation for $b_{\epsilon}(SL_3)$, where $\epsilon=e^{2\pi i/3}$.

\subsection{Summary of content}

Section \ref{sec:FRT-RE-algebra} provides an overview that relates the FRT algebra and the reflection equation algebras associated to a ribbon category $\cC$. We fix a consistent set of conventions and describe the equivalence between defining the FRT algebra as coend and via interal homs, as well as the reflection equation algebra defined via braided reconstruction by Majid \cite{Majid95book}, as coend by Lyubashenko \cites{Lyu1,Lyu2,LM}, and as internal homs by \cites{EGNO, BBJ}. If $\cC$ is given by representations of a (possibly infinite rank) Hopf algebras over a non-zero commutative ring $R$, these constructions are linked to concrete models involving the dual of a Hopf algebra.

Section \ref{sec:qgroup-defns} contains definitions on quantized enveloping algebras and quantum function algebras associated to $GL_n$ and $SL_n$ following Takeuchi \cite{Tak}. Quantum groups and quantum function algebras are displayed as duals via non-degenerate pairings. We link the quantum function algebras defined here to the abstractly defined FRT algebras from Section~\ref{sec:FRT-RE-algebra}.

Section \ref{sec:results} contains the main results of the paper giving presentations for the reflection equation algebras of type $GL_n$ and $SL_n$ at roots of unity and their associated finite-dimensional quotients, again, including integral forms.

Finally, Appendix~\ref{sec:appendix} contains a summary of the concepts and results from the theory of locally finitely presentable (LFP) categories required in order to form cocompletions of tensor categories under filtered colimits. Here, we take up some ideas of \cites{Lyu3} and describe how to associate to an $R$-linear tensor category $\cC$ with right exact (or, cocontinuous) tensor product in both components, a cocompletion that adds all directed colimits. This is necessary in order to provide an ambient category in which the FRT algebra and RE algebras $A_\cC$ and $B_\cC$ exist. This appendix was added as an exposition tailored to the study of possibly infinite-dimensional quantum groups on these topics was hard to find in the literature.

We remark that the contents of Section~\ref{sec:FRT-RE-algebra} and Appendix~\ref{sec:appendix} are written in larger generality and are not required to read Section~\ref{sec:results} \emph{if} one takes Majid's definition of the covariantized algebra of the dual as a definition of the RE algebra (see Definition~\ref{defn:cov-alg}). From this point of view, Section~\ref{sec:results} can be read independently with reference back to the definition of the relevant quantum groups in Section~\ref{sec:qgroup-defns} as required. The other sections are required to link the RE algebra to other models defined by various universal properties. 

\subsection*{Acknowledgements}

This research was funded by the University of Nottingham through a Nottingham Research Fellowship. R.~L. thanks Alex Schenkel for helpful conversations on locally finitely presentable categories. The authors thank an anonymous reviewer for careful reading of the manuscript that corrected several typos and many helpful comments.

\section{The FRT algebra and the reflection equation algebra}
\label{sec:FRT-RE-algebra}

In this section we review the construction of the FRT algebra $A_\cC$  and the reflection equation  algebra $B_\cC$ associated to a braided tensor category $\cC$. This section surveys different approaches to these objects by relating the constructions of Majid \cite{Majid95book} via braided reconstruction theory, Lyubashenko \cites{Lyu1,Lyu2,LM} via coends, and \cites{EGNO, BBJ} via internal homs. We will define the FRT algebra $A_\cC$ as an object in $\wcC^\op\boxtimes \wcC$, for the cocompletion $\wcC$ of $\cC$, and the reflection equation algebra as its image 
$$B_\cC=T(A_\cC)$$
 under the tensor functor $T\colon \wcC^\op\boxtimes \wcC\to \wcC$ given by the tensor product. The cocompletion $\wcC$ is discussed in Appendix~\ref{sec:appendix}.

We are particularly interested in the case when $H$ a Hopf algebra over an non-zero commutative ring $R$ and $\cC=\flRmod{H}{R}$, the full tensor subcategory of the monoidal category of $H$-modules which are finitely generated projective over $R$.
Throughout this section, we will describe concrete models for $A_\cC$ and $B_\cC$ in this case.

Note that if $\cC$ is a finite abelian tensor category over a field $\Bbbk$, e.g., when $H$ is finite-dimensional, $A_\cC$ and $B_\cC$ are objects in $\cC^\op\boxtimes \cC$, respectively, $\cC$, see Lemma~\ref{lem:finite-case1} and there is no need to work with the cocompletion $\wcC$ of $\cC$. In this section, we keep a more general setup in order to work with (possibly infinite rank) Hopf algebra over rings such as $R=\mZ[q,q^{-1}]$ in later parts of this article.

In the following, we will assume that $\cC$ is a braided $R$-linear tensor category, cf. Definition \ref{def:tensor}. 
In addition, we assume, when necessary, that $\cC$ has duals and a \emph{ribbon twist} (or \emph{balancing}) $\theta\colon \Id_\cC\to \Id_\cC$ such that 
$$\theta_{X \otimes Y} = (\theta_X \otimes \theta_Y)   \Psi_{Y,X}  \Psi_{X,Y}, \qquad (\theta_X)^* = \theta_{X^*}, \qquad \theta_\one = \Id_\one. $$
Thus, $\cC$ comes with a monoidal natural isomorphism 
$\tau\colon \Id_\cC\to (-)^{**}$, i.e, a \emph{pivotal structure}, given by 
\begin{equation}
\label{eq:pivotal-structure}    
\tau_X:=(\ev_X\otimes \Id_{X^{**}})(\Id_{X^*}\otimes \Psi_{X^{**},X})(\coev_{X^*}\otimes \Id_X)\theta_X.
\end{equation}
We summarize this set of assumptions by calling $\cC$ an \emph{$R$-linear ribbon category}. For example, if $R$ is a commutative ring, the the category $\flmod{R}$ of finitely-generated projective $R$-modules is an $R$-linear ribbon category together with the relative tensor product $\otimes_R$, the symmetric braiding, and trivial ribbon structure $\theta=\Id$.

\subsection{The FRT algebra \texorpdfstring{$A_\cC$}{AC} as a coend}\label{sec:AC-coend}

In this section, we start by introducing the FRT algebra $A_\cC$ as a coend.

We denote the opposite category of $\cC$ (with opposite composition of morphisms) by $\cC^\vee$ and denote the monoidal category with opposite tensor product by $\cC^\op$. We consider the bifunctor
$$\cC^\vee \times \cC\to \cC^\op\boxtimes \cC, \quad (X,Y) \mapsto X^*\boxtimes Y.$$
Its \emph{coend} is the following coequalizer
$$
A_\cC=\int^{X\in \cC}X^*\boxtimes X=\operatorname{Coeq}\Bigg(
{
\xymatrix{ \displaystyle\coprod_{Y,Z,f\in \Hom_\cC(Y,Z)}Z^*\boxtimes Y\quad \ar@<5pt>[r]^-{f^*\boxtimes \Id_Y}\ar@<-5pt>[r]_-{\Id_{Z^*}\boxtimes f}& \quad \displaystyle\coprod_{X\in \cC}X^*\boxtimes X}
}
\Bigg),
$$
see e.g. \cite{FS} for a survey on coends. The coend $A_\cC$ exists as an object in the cocompletion $\wcC^\op\boxtimes \wcC$, see Appendix \ref{sec:tensor-cocompletion}.
By definition, $A_\cC$ comes equipped with universal component maps $i_X\colon X^*\boxtimes X\to A_\cC$, for every object $X\in \cC$, which make all diagrams of the form 
\begin{equation}\label{eq:naturality-iX}
\xymatrix{
Z^*\boxtimes Y\ar[d]^{\Id\boxtimes f}\ar[rr]^{f^*\boxtimes \Id}&&Y^*\boxtimes Y\ar[d]^{i_Y}\\
Z^*\boxtimes Z\ar[rr]^{i_Z}&& A_\cC
},
\end{equation}
for $f\in \Hom_\cC(Y,Z)$, commute. The object $A_\cC$ of $\wcC^\op\boxtimes \wcC$, with the maps $i_X$, is universal in the following sense. If $A$ is another object in $\wcC^\op\boxtimes \wcC$ with maps $j_X\colon X^*\boxtimes X\to A$ that make the same diagrams as in Equation \eqref{eq:naturality-iX} commute with $j_X,j_Y$ in place of $i_X,j_Y$, then there exists a unique morphism $c\colon A_\cC\to A$ such that $c\circ i_X=j_X$,
for any object $X\in \cC$.

The induced map $m$ in the diagram
$$
\xymatrix{
& (Y^*\otimes X^*) \boxtimes (X\otimes Y)\ar@{=}[dl] \ar@{=}[dr]^{\sim}& \\
(X^*\boxtimes X)\otimes (Y^*\boxtimes Y)\ar[d]^{i_X\otimes i_Y}\ar[rr]&&(X\otimes Y)^* \boxtimes (X\otimes Y)\ar[d]^{i_{X\otimes Y}}\\
A_\cC\otimes A_\cC\ar[rr]^m&& A_\cC
},
$$
together with the unit map $u$ obtained from 
$$
\xymatrix{
\one^* \boxtimes \one \ar@{=}[dr] \ar[rr]^{i_\one}&& A_\cC\\
&\one \boxtimes \one\ar[ur]^{u}&
}
$$
make $A_\cC$ an algebra object in $\wcC^\op\boxtimes \wcC$.

\begin{defn}\label{defn:AC}
We call the algebra $A_\cC$ in $\wcC^\op\boxtimes \wcC$ the \emph{Faddeev--Reshetikhin--Takhtajan (FRT) algebra} associated to $\cC$. 
\end{defn}

\begin{ex}\label{ex:finiteHopf}
Denote by $\flmod{R}$ the category of finitely generated projective $R$-modules. We denote the tensor product $\otimes_R$ simply by $\otimes$ and note that $\flmod{R}$ is a rigid $R$-linear tensor category. Its cocompletion is the $R$-linear tensor category $\lMod{R}$, see Example~\ref{ex:Rmod-compl}.

Assume that $H$ is a Hopf algebra in $\flmod{R}$ and let $\cC=\flRmod{H}{R}$ be the category of $H$-modules which are finitely generated projective  over $R$.
Then $\cC$ has duals, given by the duals in $\flmod{R}$ where, for an object $V$, $H$ acts by 
$$(h\cdot f)(v)=f(S(h)\cdot v),$$
for $h\in H$, $v\in V$, $f\in V^*=\Hom_R(V,R)$.
The (left) dual $H^*$ is also a Hopf algebra in the same monoidal category. We always assume that the antipode of a Hopf algebra is an isomorphism. The cocompletion $\wcC$ can be identified with the $R$-linear tensor category $\lMod{H}_R$ of \emph{all} $H$-modules over $R$, see Example~\ref{ex:Cocompletion-H-projR}.

The dual $H^*$ provides a concrete model for $A_\cC$. To describe its structure, we identify $\wcC^\op$ with $\lMod{H^{\cop}}_R$, i.e., left modules over the Hopf algebra $H^\cop$ with opposite coproduct $\Delta^\cop(h)=h_{(2)}\otimes h_{(1)}$, where the coproduct of $H$ is $\Delta(h)=h_{(1)}\otimes h_{(2)}$, using Sweedler's notation.

The component maps of the coend are now given by 
\begin{equation}\label{eq:componentmapsH}
i_V\colon V^*\boxtimes V\to H^*, \quad f\boxtimes v \mapsto \left(h\mapsto f(h\cdot v)\right),
\end{equation}
for any left $H$-module $V$.
One checks that $i_V$ is a morphism of $H^\cop\otimes H$-modules via the action 
\begin{align}\label{eq:Hstarbimodule}
(k_1\otimes k_2)\cdot g(h)=g(S(k_1)h k_2), \qquad \forall g\in H^*, h,k_1,k_2\in H,
\end{align}
on $H^*$ and
$$(k_1\otimes k_2)\cdot (f\boxtimes v)=(k_1\cdot f)\boxtimes (k_2\cdot v)$$
on $V^*\boxtimes V$. 

The product $m$ on $H^*$ is given by 
$$(\alpha \beta)(h)=m(\alpha\otimes \beta)(h)=\alpha(h_{(1)})\beta(h_{(2)}).$$
One checks that this product indeed is a morphism 
$$m\colon H^*\otimes H^*\to H^*$$
of left $H^\cop\otimes H$-modules, with respect to the action specified in Equation \eqref{eq:Hstarbimodule}. The unit of $H^*$ is given by $1_{H^*}=\varepsilon$, using the counit $\varepsilon\colon H\to R$ of $H$ and it follows from $1_{H^*}=\varepsilon$ being a morphism of algebras that $1_{H^*}$ is an $H^\cop\otimes H$-invariant element.
\end{ex}

\begin{lem}\label{lem:finiteHopf}
Assume that $H$ is a Hopf algebra in $\flmod{R}$ and consider $\cC=\flRmod{H}{R}$. Then $A_\cC\cong H^*$ are isomorphic as algebras in $\cC^\op\boxtimes \cC$.
\end{lem}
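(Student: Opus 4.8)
The plan is to show directly that $H^*$, equipped with the component maps $i_V$ of Example~\ref{ex:finiteHopf}, satisfies the universal property of the coend $A_\cC = \int^{X \in \cC} X^* \boxtimes X$; by uniqueness of the representing object this yields a canonical isomorphism $A_\cC \cong H^*$ in $\cC^\op \boxtimes \cC$. Since $H$ is finite over $R$ everything takes place in $\cC^\op \boxtimes \cC$, so no cocompletion is needed (cf.\ Lemma~\ref{lem:finite-case1}). Example~\ref{ex:finiteHopf} already records that each $i_V$ is a morphism of $H^\cop \otimes H$-modules, so it remains to (i) check the dinaturality squares \eqref{eq:naturality-iX}, (ii) verify the universal factorisation, and (iii) match the algebra structures. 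Dinaturality is immediate: for an $H$-module map $f\colon Y \to Z$ and $\phi \boxtimes v \in Z^* \boxtimes Y$, both composites in \eqref{eq:naturality-iX} send $\phi \boxtimes v$ to $h \mapsto \phi(f(h\cdot v)) = \phi(h \cdot f(v))$, using precisely that $f$ is $H$-linear.

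The decisive tool is the regular representation $V = H$. I first record the identity $i_H(\alpha \boxtimes 1_H) = \alpha$ for all $\alpha \in H^*$, since $i_H(\alpha \boxtimes 1_H)(h) = \alpha(h \cdot 1_H) = \alpha(h)$; in particular $i_H$ is surjective, which will force uniqueness of any factorisation. Now suppose $A$ is an object of $\cC^\op \boxtimes \cC$ together with a dinatural family $j_V \colon V^* \boxtimes V \to A$. I would define $c \colon H^* \to A$ by $c(\alpha) = j_H(\alpha \boxtimes 1_H)$. To see that $c \circ i_V = j_V$ for every $V$, apply the dinaturality of $\{j_V\}$ to the $H$-module map $\rho_v \colon H \to V$, $h \mapsto h \cdot v$ (which is $H$-linear for the left regular action on $H$), and evaluate the resulting identity on $\phi \boxtimes 1_H$: this gives $c(i_V(\phi \boxtimes v)) = j_H\big((h \mapsto \phi(h v)) \boxtimes 1_H\big) = j_V(\phi \boxtimes v)$. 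Uniqueness of $c$ then follows from $c \circ i_H = j_H$ together with the surjectivity of $i_H$.

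The step requiring the most care is verifying that $c$ is genuinely $H^\cop \otimes H$-linear, and not merely $R$-linear. Unwinding the module actions of \eqref{eq:Hstarbimodule}, linearity amounts to the identity $j_H\big((h \mapsto \alpha(S(k_1) h k_2)) \boxtimes 1_H\big) = j_H\big((h \mapsto \alpha(S(k_1) h)) \boxtimes k_2\big)$ for all $k_1, k_2 \in H$. This is exactly the dinaturality relation for the left $H$-module endomorphism $r_{k_2} \colon H \to H$, $x \mapsto x k_2$, of the regular representation (right multiplication commutes with the left action), evaluated on $(h \mapsto \alpha(S(k_1)h)) \boxtimes 1_H$. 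Thus the dinaturality of $\{j_V\}$ supplies precisely the linearity of $c$, completing the verification of the universal property and hence the isomorphism $A_\cC \cong H^*$ of objects in $\cC^\op \boxtimes \cC$.

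Finally I would promote this to an isomorphism of algebras. The product on $A_\cC$ is characterised by $m \circ (i_X \otimes i_Y) = i_{X \otimes Y} \circ \kappa$, where $\kappa$ is the canonical identification $(X^* \boxtimes X) \otimes (Y^* \boxtimes Y) \cong (X \otimes Y)^* \boxtimes (X \otimes Y)$, and the unit by $u = i_\one$. I would check that the convolution product on $H^*$ satisfies the same two equations: for $f \boxtimes x$ and $g \boxtimes y$ one computes, using $h \cdot (x \otimes y) = h_{(1)} x \otimes h_{(2)} y$, that both $i_X(f \boxtimes x)\, i_Y(g \boxtimes y)$ and $i_{X \otimes Y}(\kappa(\cdots))$ equal $h \mapsto f(h_{(1)} x)\, g(h_{(2)} y)$, while $i_\one(1 \boxtimes 1) = \varepsilon = 1_{H^*}$. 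Since the same universal diagrams determine product and unit on both sides, the isomorphism intertwines them, giving $A_\cC \cong H^*$ as algebras in $\cC^\op \boxtimes \cC$.
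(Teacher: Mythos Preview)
Your proof is correct and follows the same strategy as the paper's: both use the regular representation $V=H$ and the identity $i_H(\alpha\boxtimes 1_H)=\alpha$ to see that the $i_V$ jointly cover $H^*$, and then verify the universal property of the coend directly. The paper's proof is a one-line sketch (``one shows that $H^*$ satisfies the universal property of the coend''); you have simply supplied the omitted details, including the careful check of $H^{\cop}\otimes H$-linearity of the factorising map via dinaturality along right multiplication.
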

\begin{proof}
The above considerations in Example \ref{ex:finiteHopf} show that there is a homomorphism of $H^\cop\otimes H$-module algebras $A_\cC\to H^*$. First note that the algebra $H^*$ is generated by elements in the image of the component maps $i_V$. Namely, take $V=H$ to be the regular $H$-module. Then $i_H(1\otimes f)=(h\mapsto f(h))=f$, for any $f\in H^*$. Now, one shows that $H^*$ satisfies the universal property of the coend together with the component maps $i_V$. 
\end{proof}

\begin{ex}\label{ex:filteredHopf}
Consider the category $\cC=\flRmod{H}{R}$ for a Hopf algebra in $\lMod{R}$ which has a filtration $H=\cup_{i\in \mN}H_i$, where the $H_i$ are finitely-generated projective as $R$-modules such that $m\colon H_i\otimes_R H_j \to H_{i+j}$. In this case, $A_\cC$ can be identified with the finitary dual $H^\circ$ of $H$. Here, $H^\circ$ is the subalgebra of $\Hom_R(H,R)$ which is the directed union of images of the component maps $i_V$ from Equation \eqref{eq:componentmapsH}. Each element of $H^\circ$ can be expressed as a \emph{coordinate function}
\begin{equation}\label{coordinate function}
c_{f,v}^V:=i_V(f\otimes v)\in H^\circ,
\end{equation}
cf., e.g., \cite{BG}*{Chapter I.7}.
The product $\cdot$ and unit $1$ of $H^\circ$ are given by 
\begin{align}
c_{f,v}^V\cdot c_{g,w}^W=c_{g\otimes f,v\otimes w}^{V\otimes W},\qquad 1=c_{\Id_{\one},1}^{\one},\label{eq-product-cVs}
\end{align}
where $g\otimes f$ is regarded as an element in $W^*\otimes V^*=(V\otimes W)^*$.

The left $H^\cop\otimes H$-module structure on $H^\circ$ is given by 
\begin{align}
(k_1\otimes k_2)\cdot c_{f,v}^V=c_{k_1\cdot f,k_2\cdot v}^V.
\end{align}
\end{ex}

When studying the representation theory of quantum groups, is sometimes convenient to consider subcategories $\cC\subseteq \flRmod{H}{R}$. In this case, $A_\cC$ is the subalgebra of $H^\circ$ generated by all coordinate functions on objects $V$ of $\cC$. The following lemma will be used in such contexts.

In the following, a set of \emph{tensor generators} for an $R$-linear tensor category $\cC$ is a set of objects $\left\{X_i\right\}_{i\in I}$ such that every object of $\cC$ is in the \emph{Karoubian envelope} of the full subcategory on tensor products of the ${X_i}_{i\in I}$. That is, every object of $\cC$ can be obtained by taking tensor products, finite direct sums, duals, and direct summands of the objects $\left\{X_i\right\}_{i\in I}$. 

\begin{lem}\label{lem:tensorgenerators}
Let $H$ be a Hopf algebra $H$ as in Example~\ref{ex:filteredHopf}. Let $\cC$ be full tensor subcategory of $\flRmod{H}{R}$ with a set of tensor generators $\left\{X_i\right\}_{i\in I}$. Then $A_\cC$ is isomorphic as an algebra to the subalgebra of $H^\circ$ generated by coordinate functions $c_{f,v}^{X_i}$, varying over $i\in I$, $f\in X_i^*, v\in X_i$.
\end{lem}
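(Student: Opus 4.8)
The plan is to reduce the statement to Lemma~\ref{lem:finiteHopf} (equivalently, to the coend description of $A_\cC$) by exploiting the universal property of the coend together with the hypothesis that $\{X_i\}_{i\in I}$ tensor-generates $\cC$. Write $B\subseteq H^\circ$ for the subalgebra generated by all coordinate functions $c_{f,v}^{X_i}$ with $i\in I$, $f\in X_i^*$, $v\in X_i$. Since $A_\cC$ is defined as the coend $\int^{X\in\cC}X^*\boxtimes X$ computed inside $\wcC^\op\boxtimes\wcC$, and since for $\cC\subseteq\flRmod{H}{R}$ the component maps $i_X$ land in $H^\circ$ by Equation~\eqref{eq:componentmapsH}, the image of $A_\cC$ in $H^\circ$ is exactly the subalgebra generated by \emph{all} coordinate functions $c_{f,v}^X$ as $X$ ranges over \emph{all} objects of $\cC$. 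So the content of the lemma is that this larger generating set produces the same subalgebra $B$ that is already generated by the coordinate functions on the tensor generators alone.

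First I would show $B$ is closed under the operations that build arbitrary objects of $\cC$ out of the $X_i$, i.e. that every coordinate function $c_{f,v}^X$ lies in $B$. This is where the multiplicativity of coordinate functions in Equation~\eqref{eq-product-cVs} is used. For a tensor product $X\otimes Y$ the identity $c_{g\otimes f,v\otimes w}^{X\otimes Y}=c_{f,v}^{X}\cdot c_{g,w}^{Y}$ shows that coordinate functions on $X\otimes Y$ are products of those on $X$ and $Y$ (note that every element of $(X\otimes Y)^*=Y^*\otimes X^*$ is a sum of simple tensors $g\otimes f$, so by $R$-bilinearity this covers all coordinate functions on $X\otimes Y$). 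For a finite direct sum $X\oplus Y$, one checks $c_{f,v}^{X\oplus Y}$ decomposes additively via the inclusions and projections; for a direct summand $X$ of $Y$ with idempotent $e\in\End_\cC(Y)$, the naturality square~\eqref{eq:naturality-iX} applied to $e$ shows $c_{f,v}^X=c_{e^*f,ev}^{Y}$ lies among the coordinate functions on $Y$; and for a dual $X^*$ one uses the pivotal/ribbon structure on $\cC$ to rewrite $c_{f,v}^{X^*}$ in terms of a coordinate function on $X$ (via the canonical identification $X^{**}\cong X$). Running this over the inductive construction of an arbitrary object from the $X_i$ gives $c_{f,v}^X\in B$ for all $X\in\cC$, hence the image of $A_\cC$ in $H^\circ$ equals $B$.

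It then remains to upgrade this equality of images to an algebra isomorphism $A_\cC\cong B$. For this I would invoke the universal property directly: the component maps $i_{X_i}\colon X_i^*\boxtimes X_i\to B$, together with their images under the object-building operations, assemble (using the naturality condition~\eqref{eq:naturality-iX}, which holds because morphisms in $\cC$ are morphisms in $\flRmod{H}{R}$ and $B$ inherits the $H^\cop\otimes H$-module structure from $H^\circ$) into a cone under the coend diagram for $\cC$. This yields a canonical algebra map $A_\cC\to B$; it is surjective by the previous paragraph, and injectivity follows exactly as in the proof of Lemma~\ref{lem:finiteHopf}, since the component maps of $A_\cC$ factor the coordinate functions and $B$ satisfies the same universal property restricted to $\cC$.

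\textbf{Main obstacle.}
The step I expect to require the most care is the \emph{dual} case: reducing a coordinate function $c_{f,v}^{X^*}$ to one on $X$. This is the only object-building operation that genuinely uses the ribbon (pivotal) structure of $\cC$ rather than just the algebra and module axioms, and it forces one to track the identification $X^{**}\cong X$ and the twisted $H$-action on duals from Example~\ref{ex:finiteHopf} carefully, so that the rewriting respects the $H^\cop\otimes H$-module structure~\eqref{eq:Hstarbimodule}. The direct-summand case is a close second, since it relies on the naturality square for a non-invertible morphism (an idempotent), but that is a direct application of~\eqref{eq:naturality-iX}; the tensor-product and direct-sum cases are routine given Equation~\eqref{eq-product-cVs}.
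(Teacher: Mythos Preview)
The paper states this lemma without proof, so there is no paper argument to compare against. Your strategy is the natural one and is essentially correct: once one accepts (from Example~\ref{ex:filteredHopf}) that $A_\cC$ is identified with the $R$-span in $H^\circ$ of all coordinate functions $c_{f,v}^X$ with $X\in\cC$, the task is exactly to show this span is already the subalgebra $B$ generated by those on the $X_i$. Your treatment of tensor products (via \eqref{eq-product-cVs}), direct sums, and direct summands is correct and routine.

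Two remarks. First, your final paragraph on injectivity via the universal property is more machinery than needed. Since you are already working inside $H^\circ$, it suffices to show $A_\cC\subseteq B$ (your inductive argument) and $B\subseteq A_\cC$ (trivial, since each $c_{f,v}^{X_i}$ lies in $A_\cC$) as subsets of $H^\circ$; there is no separate injectivity step.

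Second, your instinct about the dual case is well placed, but note that the paper's own definition of tensor generators is slightly ambiguous: the first sentence (Karoubian envelope of tensor products of the $X_i$) does \emph{not} include taking duals, while the gloss after ``That is'' does. In the paper's actual applications (e.g.\ Proposition~\ref{prop:Tak-GLn}) the generating set is taken to be $\{V_1,V_1^*\}$, i.e.\ closed under duals from the start, which sidesteps the issue entirely. If one does allow duals as a building operation, your reduction via the antipode formula $S(c_{f,v}^X)=c_{\tau_X(v),f}^{X^*}$ of Lemma~\ref{lem:AC-Hopf} is the right move, but then one must argue separately that $B$ is closed under $S$---which is not automatic for an arbitrary subalgebra of a Hopf algebra, and in practice amounts to the hypothesis that the generating set is closed under duals.
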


Forgetting the $H^\cop\otimes H$-module structure on $A_\cC$ we obtain a Hopf algebra structure for $A_\cC$ as an object of $\lMod{R}$. 
\begin{lem}\label{lem:AC-Hopf}
In the setup of Lemma \ref{lem:tensorgenerators}, the subalgebra $A_\cC$ of $H^\circ$ is an $R$-sub-bialgebra with coproduct and counit given by 
\begin{gather}
\Delta(c_{f,v}^V)=\sum_{\alpha}c_{f,v_\alpha}^V\otimes c_{f^\alpha,v}^V,\qquad \varepsilon(c_{f,v}^V)=f(v)=\ev_V(f\otimes v),
\end{gather}
where $\left\{v_\alpha\right\}\subset V$ and $\left\{f_\alpha\right\}\subset V^*$ are dual bases for $V$ an object of $\cC$.

If $\cC$ is a rigid category, then $A_\cC$ is a Hopf subalgebra with antipode given by
\begin{gather}\label{eq:antipode-matrix-coeff}
S(c_{f,v}^V)=c_{\tau_V(v),f}^{V^*}\; ,
\end{gather}
where $\tau_V$ denotes the pivotal structure of $\flmod{R}$. Moreover, if $H$ is quasitriangular with R-matrix $R=R^{(1)}\otimes R^{(2)}\in H\otimes H$, then $H^\circ$ is dual quasitriangular with dual R-matrix
\begin{equation}\label{eq:R-matrix-mat-coeff}
    R(c_{f,v}\otimes c_{g,w})=f(R^{(1)}\cdot v)g(R^{(2)}\cdot w).
\end{equation}
\end{lem}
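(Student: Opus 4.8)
The plan is to exploit that every piece of structure on the finitary dual $H^\circ$ arises by transposing the corresponding structure map of $H$: the coproduct is dual to the multiplication of $H$, the counit is evaluation at $1_H$, the antipode is precomposition with $S$, and the dual $r$-form is pairing against the $R$-matrix. Since $H$ is a bialgebra (respectively a Hopf algebra, respectively quasitriangular), the dual axioms on $H^\circ$ follow formally by dualization. Thus the genuine content is twofold: (i) rewriting each transposed map on a coordinate function $c_{f,v}^V$ to recover the stated closed formulas, and (ii) checking that each operation preserves the subalgebra $A_\cC\subseteq H^\circ$ generated by the $c_{f,v}^V$ with $V\in\cC$. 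I would address these two points together for each structure map.

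For the coproduct and counit I would use $\Delta(\phi)(h\otimes k)=\phi(hk)$ and $\varepsilon(\phi)=\phi(1_H)$. Expanding $c_{f,v}^V(hk)=f(h\cdot(k\cdot v))$ and inserting the resolution $k\cdot v=\sum_\alpha f^\alpha(k\cdot v)\,v_\alpha$ in terms of dual bases $\{v_\alpha\}\subset V$, $\{f^\alpha\}\subset V^*$ gives
\[
c_{f,v}^V(hk)=\sum_\alpha f(h\cdot v_\alpha)\,f^\alpha(k\cdot v)=\sum_\alpha c_{f,v_\alpha}^V(h)\,c_{f^\alpha,v}^V(k),
\]
which is exactly the claimed formula for $\Delta(c_{f,v}^V)$, while $\varepsilon(c_{f,v}^V)=c_{f,v}^V(1_H)=f(v)$. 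Crucially, both output coordinate functions remain on the object $V$, so $\Delta$ lands in $A_\cC\otimes A_\cC$ and $\varepsilon$ in $R$; no enlargement of $\cC$ is needed, and $A_\cC$ is therefore a sub-bialgebra.

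For the antipode I would combine $S(\phi)(h)=\phi(S(h))$ with the $H$-action on the dual from Example~\ref{ex:finiteHopf}, namely $(h\cdot f)(u)=f(S(h)\cdot u)$, and the fact that the pivotal structure $\tau_V$ of $\flmod{R}$ is the canonical double-dual identification $\tau_V(v)(g)=g(v)$ for $g\in V^*$. Then
\[
c_{\tau_V(v),f}^{V^*}(h)=\tau_V(v)\big(h\cdot f\big)=(h\cdot f)(v)=f(S(h)\cdot v)=c_{f,v}^V(S(h))=S(c_{f,v}^V)(h),
\]
establishing the formula. Here the output is a coordinate function on $V^*$, so closure of $S$ within $A_\cC$ is precisely where the rigidity hypothesis on $\cC$ enters: it guarantees $V^*\in\cC$.

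Finally, defining the dual $r$-form by $\phi\otimes\psi\mapsto\phi(R^{(1)})\psi(R^{(2)})$ yields, on coordinate functions, $f(R^{(1)}\cdot v)\,g(R^{(2)}\cdot w)$ as stated, and the dual quasitriangularity axioms are the transposes of those for $R$, hence automatic. I expect the main obstacle to be bookkeeping rather than conceptual: one must confirm that the transposed maps genuinely land in the \emph{finite} dual $H^\circ\otimes H^\circ$ rather than merely in $(H\otimes H)^*$, and that they preserve $A_\cC$. The coordinate-function description handles this cleanly, since each formula manifestly outputs elements of $A_\cC$ (using rigidity for $S$); the remaining subtlety is confined to verifying that the filtration and projectivity assumptions of Example~\ref{ex:filteredHopf} make these transposes well defined, and that the dual-basis expansion above is basis-independent, which holds because $\Delta$ is intrinsically the transpose of the multiplication.
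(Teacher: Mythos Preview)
Your argument is correct and is the standard dualization proof. The paper itself states this lemma without proof, treating the formulas as well known; your proposal supplies precisely the expected verification, including the key observations that $\Delta$ stays within $A_\cC$ because both tensor factors are coordinate functions on the same $V$, and that closure under $S$ requires $V^*\in\cC$, which is exactly the rigidity hypothesis.
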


Lemma~\ref{lem:tensorgenerators} shows that to find the generators for $A_\cC$ one can take the coordinate functions $x^i_j=c_{f_i,v_j}$ obtained from  basis vectors $v_j$ and dual basis vectors $f_i$ of the tensor generators. In order to determine the relations among the generators $x^i_j$, we utilize pairings similarly to, e.g.,  \cite{Majid95book}. Assume that we have found an algebra $A$ with generators $\ov{x}^i_j$ and a surjective homomorphism of algebras 
$$A\twoheadrightarrow A_\cC\subseteq H^\circ,$$
such that $\ov{x}^i_j\mapsto x^i_j$.  
Then $A\cong A_\cC$ if and only if the composite map $\phi\colon A\to H^\circ$ is injective. This, in turn, is equivalent to the Hopf algebra pairing 
$$\langle -,-\rangle\colon A\otimes H\to R,\qquad \ov{x}^i_j\otimes h\mapsto \phi(\ov{x}^i_j)(h)=f^i(h\cdot v_j),$$
having trivial left radical. This pairing is extended to products of the generators in the left component via
\begin{equation}\label{eq:extend-pairing}
\langle ab,h \rangle = \langle a,h_{(1)} \rangle\langle b,h_{(2)} \rangle, \qquad \langle 1,h \rangle=\varepsilon(h).
\end{equation}

In the finite case, $A_\cC$ can be identified with the dual of $H$ as a Hopf algebra.

\begin{lem}\label{lem:finiteHopf-AC}
Assume that $H$ is a Hopf algebra in $\flmod{R}$ and consider $\cC=\flRmod{H}{R}$. Then $A_\cC\cong H^*$ are isomorphic as Hopf algebras in $\flmod{R}$.
\end{lem}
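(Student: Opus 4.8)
The plan is to upgrade the algebra isomorphism of \cref{lem:finiteHopf} to an isomorphism of Hopf algebras, so the key observation is that most of the work is already done. By \cref{lem:finiteHopf} we have an algebra isomorphism $A_\cC \cong H^*$ when $H$ is a Hopf algebra in $\flmod{R}$ and $\cC = \flRmod{H}{R}$. Since $H$ is finitely generated projective over $R$, it is in particular an instance of the filtered situation of \cref{ex:filteredHopf} (with a trivial, eventually constant, filtration), so \cref{lem:AC-Hopf} applies and equips $A_\cC$ with the structure of a Hopf algebra in $\flmod{R}$, with coproduct, counit and antipode given by the coordinate-function formulas there. Thus what remains is to check that the identification $A_\cC \cong H^*$ from \cref{lem:finiteHopf} intertwines these structures with the standard dual Hopf algebra structure on $H^*$, namely the coproduct dual to the product of $H$, the counit $\beta \mapsto \beta(1_H)$, and the antipode dual to $S_H$.

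First I would fix the explicit form of the isomorphism. Under the component map \eqref{eq:componentmapsH} applied to the regular module $V = H$, every $\beta \in H^*$ is realized as a coordinate function $c^H_{\beta, 1_H}$, since $i_H(\beta \boxtimes 1_H)(h) = \beta(h \cdot 1_H) = \beta(h)$; this is exactly the surjectivity argument already used in the proof of \cref{lem:finiteHopf}. So the plan is to transport the formulas of \cref{lem:AC-Hopf} along $\beta \leftrightarrow c^H_{\beta,1_H}$ and verify they agree with the dual Hopf structure. For the counit this is immediate: \cref{lem:AC-Hopf} gives $\varepsilon(c^H_{\beta,1_H}) = \beta(1_H)$, which is the standard counit of $H^*$. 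For the coproduct, I would choose a dual basis $\{v_\alpha\}, \{f^\alpha\}$ of $H$ and expand $\Delta(c^H_{\beta,1_H}) = \sum_\alpha c^H_{\beta, v_\alpha} \otimes c^H_{f^\alpha, 1_H}$, then check that evaluating this against $h \otimes h'$ reproduces $\beta(h h')$, the value of the dual coproduct; this is a direct computation using $\sum_\alpha f^\alpha(h') v_\alpha = h'$ and the definition of the coordinate functions.

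For the antipode I would use the rigidity of $\cC$ (available since $H$ has bijective antipode), so that \cref{lem:AC-Hopf} gives $S(c^H_{\beta,1_H}) = c^{H^*}_{\tau_H(1_H), \beta}$, and I would verify this equals the map dual to $S_H$, i.e.\ $\beta \mapsto (h \mapsto \beta(S_H(h)))$. The main obstacle, and the one step deserving care, is keeping the pivotal/dual-basis bookkeeping consistent: the formula \eqref{eq:antipode-matrix-coeff} is phrased using a coordinate function on the dual object $H^*$ and the pivotal structure $\tau_H$ of $\flmod{R}$, so I would need to unwind how $c^{H^*}_{\tau_H(1_H),\beta}$ is evaluated on $h \in H$ and confirm that the pivotal structure contributes exactly the antipode of $H$ rather than an extra twist. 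The remaining compatibility of the bialgebra maps with the $R$-linear (tensor) structure is routine and follows from naturality of the coend component maps, so I would state it briefly rather than expand it.
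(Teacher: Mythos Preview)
Your proposal is correct and follows the same route as the paper, which simply cites \cref{lem:finiteHopf} and \cref{lem:AC-Hopf}. One remark: the explicit verification you outline for the coproduct, counit, and antipode is not strictly necessary, since \cref{lem:AC-Hopf} already asserts that $A_\cC$ is a Hopf \emph{sub}algebra of $H^\circ$ (which equals $H^*$ with its standard dual Hopf structure in the finite case), so once \cref{lem:finiteHopf} identifies $A_\cC$ with all of $H^*$ the Hopf structures automatically agree.
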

\begin{proof}
This follows from Lemmas~\ref{lem:finiteHopf} and \ref{lem:AC-Hopf}.
\end{proof}

The following important examples will be generalized to the rank $n$-case, and more general base rings, in Section~\ref{sec:qgroup-defns}.

\begin{ex}[$O_q(SL_2)$]\label{ex:sl2}
Let $\Bbbk$ be a field of characteristic zero. Consider the quantum group 
$U_q(\mathfrak{sl}_2)$, for $q\in \Bbbk\setminus \{0,1,-1\}$, generated as a $\Bbbk$-algebra by 
$E,F,K,K^{-1}$ subject to the relations 
\begin{gather*}
KK^{-1}=K^{-1}K, \quad KEK^{-1}=q^2E,\quad  KFK^{-1}=q^{-2}F, \quad  [E,F]=\frac{K-K^{-1}}{q-q^{-1}},
\end{gather*}
with coproduct determined by 
\begin{gather*}
\Delta(K)=K\otimes K, \quad \Delta(E)=E\otimes K + 1\otimes E, \quad \Delta(F)=F\otimes 1 +K^{-1}\otimes F.
\end{gather*}
The presentation here comes from
\cite{KS}*{Section~3.1}.

Consider  the two-dimensional $U_q(\mathfrak{sl}_2)$-module $V=\Bbbk\langle v_1,v_2\rangle$, with actions 
\begin{gather*}
K\cdot v_1=qv_1, \quad K\cdot v_2=q^{-1}v_2, \\
F\cdot v_1=v_2, \quad F\cdot v_2=0
\\
E\cdot v_1=0, \quad E\cdot v_2=v_1.
\end{gather*}
This module (denoted by $T_{1/2}$ in \cite{KS}*{Section~3.2}) is a tensor generator for the tensor subcategory $\cC=\cC_q(\mathfrak{sl}_2)$ of $\flRmod{U_q(\mathfrak{sl}_2)}{\Bbbk}$ given by so-called \emph{type I modules}, cf. \cite{BG}*{Section I.6.12}. 
Thus, by Lemma \ref{lem:tensorgenerators}, the algebra $A_\cC$ is generated as an algebra by the image of $i_V\colon V^*\boxtimes V\to H^*$. 
Denoting the dual basis by $\left\{f_1,f_2\right\}$, we set
\begin{align}x^i_{j}:=i_V(f_i\otimes v_j), \text{ and } a:=x^1_{1}, \quad b:=x^1_{2}, \quad c:=x^2_{1}, \quad d:=x^2_{2}.\label{eq:abcd-def}
\end{align}
We define $O_q(SL_2)$ to be the Hopf algebra generated by $a,b,c,d$ as an algebra, subject to relations
\begin{gather}
\begin{gathered}
ab=q^{-1}ba, \quad
ac=q^{-1}ca,\quad 
bd=q^{-1}db,\quad cd=q^{-1}dc, \quad 
bc = cb, \\
ad - da = \left(q^{-1} - q\right)bc,
\end{gathered}\label{eq:AR-rels}
\end{gather}
plus the additional relation 
\begin{gather}
ad-q^{-1}bc=1.\label{eq:qdet-rel}
\end{gather}
This algebra appears in \cite{Majid95book}*{Proposition~4.2.6}. Note that the algebra generated by $a,b,c,d$ satisfying Equation \eqref{eq:AR-rels} but not Equation \eqref{eq:qdet-rel} is the algebra $O_q(M_2)$ from \cite{Majid95book}*{Example~4.2.5}. That is, it is obtained as the algebra $A(R)$ for the matrix solution
$$R=q^{-1/2}\begin{pmatrix}
q&0&0&0\\0&1&q-q^{-1}&0\\0&0&1&0\\0&0&0&q
\end{pmatrix}
$$
of the quantum Yang--Baxter equation.
The above discussion implies that  $O_q(SL_2)$ is an algebra object in $\wcC^\op\boxtimes \wcC\simeq \lMod{U_q(\mathfrak{sl}_2)^\cop\otimes U_q(\mathfrak{sl}_2)}_\Bbbk$. In particular, $O_q(SL_2)$ has a $U_q(\mathfrak{sl}_2)^\cop\otimes U_q(\mathfrak{sl}_2)$-module structure given by
\begin{gather}
(K\otimes 1)a=q^{-1}a,\quad (K\otimes 1) b=q^{-1}b,\quad (K\otimes 1) c=qc,\quad (K\otimes 1) d=q d,\\
(1\otimes K)a=qa,\quad (1\otimes K) b=q^{-1}b,\quad (1\otimes K) c=qc,\quad (1\otimes K) d=q^{-1}d,\\
(E\otimes 1)a=-qc,\quad (E\otimes 1) b=-qd,\quad (E\otimes 1) c=0,\quad (E\otimes 1) d=0,\\
(1\otimes E)a=0,\quad (1\otimes E) b=a,\quad (1\otimes E) c=0,\quad (1\otimes E) d=c,\\
(F\otimes 1)a=0,\quad (F\otimes 1) b=0,\quad (F\otimes 1) c=-q^{-1}a,\quad (F\otimes 1) d=-q^{-1}b,\\
(1\otimes F)a=b,\quad (1\otimes F) b=0,\quad (1\otimes F) c=d,\quad (1\otimes F) d=0.
\end{gather}
\end{ex}

\begin{lem}
Let $\cC=\cC_q(\mathfrak{sl}_2)$ and assume that $q$ is not a root of unity in $\Bbbk$. Then $A_\cC$ is isomorphic to $O_q(SL_2)$ as an algebra in $\wcC^\op\boxtimes \wcC$, i.e, as a $U_q(\mathfrak{sl}_2)^\cop\otimes U_q(\mathfrak{sl}_2)$-module algebra. 
\end{lem}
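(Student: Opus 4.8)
The plan is to apply the pairing criterion established just after \cref{lem:AC-Hopf}: produce a surjective algebra homomorphism $\phi\colon O_q(SL_2)\to A_\cC\subseteq U_q(\mathfrak{sl}_2)^\circ$ fixing the generators, and then show it is injective by proving that the induced Hopf pairing has trivial left radical. Since $V$ is a tensor generator of $\cC=\cC_q(\mathfrak{sl}_2)$, \cref{lem:tensorgenerators} already gives that $A_\cC$ is generated as an algebra by the coordinate functions $a,b,c,d$ of \eqref{eq:abcd-def}, so the only point in constructing $\phi$ is to verify that these satisfy the defining relations of $O_q(SL_2)$.

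First I would check the relations. The quadratic relations \eqref{eq:AR-rels} are the FRT relations associated with the braiding $\Psi_{V,V}$, whose matrix on $V\otimes V$ is the matrix $R$ of \cref{ex:sl2}; they hold among the coordinate functions by naturality of the braiding of $\cC$ (equivalently, by dual quasitriangularity of the coend, \cref{lem:AC-Hopf}), and can also be checked directly by evaluating each relation on $U_q(\mathfrak{sl}_2)$ via the product rule \eqref{eq-product-cVs}. For the determinant relation \eqref{eq:qdet-rel}, I would observe that $ad-q^{-1}bc$ is the coordinate function of the trivial summand $\one\subseteq V\otimes V$, spanned by $w=v_1\otimes v_2-q^{-1}v_2\otimes v_1$; since $U_q(\mathfrak{sl}_2)$ acts on $w$ through the counit, evaluating on any $h$ yields $\varepsilon(h)$, so $ad-q^{-1}bc=1_{H^\circ}$. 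This produces the surjection $\phi$.

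Next I would prove injectivity, which is the crux. By the criterion following \cref{lem:AC-Hopf}, $\ker\phi$ equals the left radical of the pairing $\langle \ov{x}^i_j,h\rangle=f^i(h\cdot v_j)$ extended via \eqref{eq:extend-pairing}. I would fix a PBW basis of $O_q(SL_2)$, namely the ordered monomials $\{b^ic^ja^k\}_{i,j,k\geq 0}\cup\{b^ic^jd^k\}_{i,j\geq 0,\,k\geq 1}$, and pair them against the PBW basis $\{F^rK^sE^t\}$ of $U_q(\mathfrak{sl}_2)$. The group-like $K$ makes the pairing respect the weight $\mZ$-grading, so it block-decomposes by weight; within each block the Gram matrix is triangular with diagonal entries equal to powers of $q$ times quantum factorials such as $[i]_q!\,[j]_q!$ (e.g. $\langle c^k,F^k\rangle=[k]_q!$ up to a unit). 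These are nonzero precisely because $q$ is not a root of unity, so the pairing is left nondegenerate and $\phi$ is an isomorphism of algebras.

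Finally, I would transport the $U_q(\mathfrak{sl}_2)^\cop\otimes U_q(\mathfrak{sl}_2)$-module structure from $A_\cC$ along $\phi$ and compare it on the generators, using $(k_1\otimes k_2)\cdot c^V_{f,v}=c^V_{k_1\cdot f,\,k_2\cdot v}$; for instance $(E\otimes 1)\cdot a=c^V_{E\cdot f_1,v_1}=-qc$, recovering exactly the explicit action recorded in \cref{ex:sl2}. Hence $\phi$ is an isomorphism in $\wcC^\op\boxtimes\wcC$. The main obstacle is the nondegeneracy step: setting up a PBW basis compatible with the determinant relation and controlling the triangular Gram matrix is where the hypothesis that $q$ is not a root of unity is essential, since at roots of unity the relevant quantum factorials vanish and the pairing degenerates.
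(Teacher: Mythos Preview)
Your proposal is correct and follows essentially the same strategy as the paper: use \cref{lem:tensorgenerators} to see that $a,b,c,d$ generate $A_\cC$, verify that the defining relations of $O_q(SL_2)$ lie in the left radical of the pairing so that $\phi$ is well-defined, and then establish injectivity of $\phi$ via nondegeneracy of the Hopf pairing.

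The only substantive difference is in the nondegeneracy step: the paper simply cites \cite{KS}*{Section~4.1}, whereas you sketch a direct argument via PBW bases and a block-triangular Gram matrix with quantum-factorial diagonal entries. Your sketch captures exactly the mechanism behind that cited result and makes transparent why the root-of-unity hypothesis is needed. You also go further than the paper's proof in two places: you give a clean conceptual reason for the determinant relation (identifying $ad-q^{-1}bc$ as the coordinate function on the trivial summand of $V\otimes V$), and you explicitly verify compatibility with the $U_q(\mathfrak{sl}_2)^{\cop}\otimes U_q(\mathfrak{sl}_2)$-action on generators, which the paper's proof leaves implicit. Both additions are correct and helpful.
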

\begin{proof}
Since $V$ is a tensor generator for the category $\mathcal{C}_q(\mathfrak{sl}_2)$, Lemma \ref{lem:tensorgenerators} shows that $a,b,c,d$ generate $A_\cC$. By the discussion after Lemma \ref{lem:AC-Hopf}, it remains to show that the pairing $\langle ~,~\rangle\colon O_q(SL_2)\otimes U_q(\mathfrak{sl}_2)\to \Bbbk$ with the only non-zero values on generators given by 
\begin{gather}
\begin{gathered}
\inner{a,K}=c^V_{f_1,v_1}(K)=q, \qquad \inner{d,K}=c^V_{f_2,v_2}(K)=q^{-1},\\
\inner{a,1}=1, \qquad \inner{d,1}=1,\\
\inner{b,E}=c^V_{f_1,v_2}(E)=1, \qquad \inner{c,F}=c^V_{f_2,v_1}(F)=1
\end{gathered} \label{pairings-OUsl2}
\end{gather}
is non-degenerate. 
One now checks that all relations from \eqref{eq:AR-rels}--\eqref{eq:qdet-rel} are in the left radical of the pairing. These are all relations as the resulting pairing on $O_q(SL_2)\otimes U_q(\mathfrak{sl}_2)$ is non-degenerated provided that $q$ is \emph{not} a root of unity, see e.g.  \cite{KS}*{Section~4.1}.
\end{proof}

Following, e.g., \cite{KS}*{Section~6.1.2}, consider the quantum group 
$U_q(\mathfrak{gl}_2)$, for $q\in \Bbbk\setminus \{0,1,-1\}$, generated as a $\Bbbk$-algebra by 
$E,F,J_i^{\pm}$, for $i=1,2$, subject to the relations 
\begin{gather*}
J_iJ_i^{-1}=J_i^{-1}J_i=1, \quad J_iEJ_i^{-1}=q^{(-1)^i}E,\quad  J_iFJ_i^{-1}=q^{(-1)^{i+1}}F, \\  [E,F]=\frac{J_1J_{2}^{-1}-J_1^{-1}J_2}{q-q^{-1}},
\end{gather*}
with coproduct determined by 
\begin{gather*}
\Delta(J_i)=J_i\otimes J_i, \quad \Delta(E)=E\otimes J_1J_2^{-1} + 1\otimes E, \quad \Delta(F)=F\otimes 1 +J_1^{-1}J_2\otimes F.
\end{gather*}
We note that $U_q(\mathfrak{sl}_2)$ is the Hopf subalgebra of $U_q(\mathfrak{gl}_2)$ generated by $E,F,J_1J_2^{-1}$.
Example \ref{ex:sl2} can, equally well, be extended to the quantum group of $\mathfrak{gl}_2$. 

\begin{ex}[$O_q(GL_2)$]\label{ex:gl2}
Similarly to \cite{Zh} (or \cite{KS}*{Sections~8.4, 9.4}) consider the type I simple highest weight module $L_{\epsilon_1}$ of $U_q(\mathfrak{gl}_2)$. We denote the highest weight vector $v_1$. We denote $E:=E_1$, $F:=F_1$, and obtain the action 
\begin{align*}
J_1\cdot v_1&=qv_1, &J_2\cdot v_1&=v_1, &
E\cdot v_1&=0, & F\cdot v_1&=v_2,\\
J_1\cdot v_2&=v_2, & J_2\cdot v_2&=qv_2,&
E\cdot v_2&=v_1, & F\cdot v_2&=0.
\end{align*}
Note that the restriction of $L_{\epsilon_1}$ to $U_q(\mathfrak{sl}_2)$ recovers the representation $V$ from Example \ref{ex:sl2}. Hence, we similarly define $a,b,c,d$ as in \eqref{eq:abcd-def}. It follows that the resulting subalgebra $A$ of $U_q(\mathfrak{gl}_2)^{\circ}$ satisfies the relations \eqref{eq:AR-rels} but not \eqref{eq:qdet-rel}.
However, the quantum determinant 
\begin{equation}\label{eq:qdet2}
\dq{2} :=ad-q^{-1}bc
\end{equation}
 is invertible as an element in $U_q(\mathfrak{gl}_2)^{\circ}$. This follows since $\dq{2}$ is grouplike in $U_q(\mathfrak{gl}_2)^{\circ}$ by \cite{KS}*{Section~9.2.2}, i.e., defines a character on $U_q(\mathfrak{gl}_2)$.

We define $O_q(GL_2)$ as the localization
$$O_q(GL_2)=O_q(M_2)[\dq{2}]^{-1} = O_q(M_2)[t]/(t\dq{2}-1).$$
A pairing $\inner{-,-}\colon O_q(GL_2)\otimes U_q(\mathfrak{gl}_2)\to \Bbbk$ is given on generators by the non-zero values on generators
\begin{align*}
\inner{a,J_1}&=c^V_{f_1,v_1}(J_1)=q, & \inner{a,J_2}&=1,\\ 
\inner{d,J_1}&=1 , & \inner{d,J_2}&=q,\\
\inner{a,1}&=1, & \inner{d,1}&=1,\\
\inner{b,E}&=c^V_{f_1,v_2}(E)=f_1(Ev_2)=1, & \inner{c,F}&=1,
\end{align*}
and extended via Equation \eqref{eq:extend-pairing} to products of generators and hence to the localization.

Under restriction along the inclusion of $U_q(\mathfrak{sl}_2)\hookrightarrow U_q(\mathfrak{gl}_2)$ in the right component, the pairing $\inner{-,-}$ becomes degenerate with left radical given by the relation $\dq{2}=1$. Hence, it factors over the non-degenerate pairing from Example \ref{ex:sl2}. 
\end{ex}

\begin{lem} Let $\cC=\cC_q(\mathfrak{gl}_2)$. Then $A_\cC$ is isomorphic to $O_q(GL_2)$ as an algebra in $\cC^\op\boxtimes \cC$. 
\end{lem}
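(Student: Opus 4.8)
The plan is to follow the same strategy as in the $\mathfrak{sl}_2$ case, exhibiting a surjection from $O_q(GL_2)$ onto $A_\cC$ and then proving that the associated Hopf pairing with $U_q(\mathfrak{gl}_2)$ has trivial left radical. First I would record that $L_{\epsilon_1}$ is a tensor generator for $\cC_q(\mathfrak{gl}_2)$: every type~I simple module is a direct summand of a tensor product of copies of $L_{\epsilon_1}$ and its dual, and in particular the determinant line $L_{\epsilon_1+\epsilon_2}=\wedge^2 L_{\epsilon_1}$ together with its inverse $L_{-\epsilon_1-\epsilon_2}=(L_{\epsilon_1+\epsilon_2})^*$ lie in the Karoubian envelope. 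By Lemma~\ref{lem:tensorgenerators}, $A_\cC$ is therefore generated as an algebra by the coordinate functions on $L_{\epsilon_1}$ and on $L_{\epsilon_1}^*$. The former are $a,b,c,d$ of \eqref{eq:abcd-def}; the latter are, by the antipode formula \eqref{eq:antipode-matrix-coeff}, the entries $S(a),S(b),S(c),S(d)$, which by the quantum inverse-matrix formula are expressible in terms of $a,b,c,d$ and $\dq{2}^{-1}$. Equivalently, $\dq{2}^{-1}$ is itself a coordinate function on the one-dimensional module $L_{-\epsilon_1-\epsilon_2}$ and is grouplike by \cite{KS}*{Section~9.2.2}. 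Hence $A_\cC$ is generated as an algebra by $a,b,c,d$ and $\dq{2}^{-1}$.

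Next I would construct the comparison map. The elements $a,b,c,d$ satisfy the relations \eqref{eq:AR-rels} inside $U_q(\mathfrak{gl}_2)^\circ$, so they define an algebra map $O_q(M_2)\to A_\cC$; since $\dq{2}$ maps to an invertible element, the universal property of the localization $O_q(GL_2)=O_q(M_2)[\dq{2}]^{-1}$ extends this to an algebra map $\phi\colon O_q(GL_2)\to A_\cC$. By the previous paragraph $\phi$ is surjective. Following the discussion after Lemma~\ref{lem:AC-Hopf}, $\phi$ is an isomorphism precisely when the Hopf pairing $\inner{-,-}\colon O_q(GL_2)\otimes U_q(\mathfrak{gl}_2)\to \Bbbk$ of Example~\ref{ex:gl2}, extended to products by \eqref{eq:extend-pairing}, has trivial left radical.

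Finally I would establish this non-degeneracy by reducing to the $\mathfrak{sl}_2$ case. As observed in Example~\ref{ex:gl2}, restricting the pairing along $U_q(\mathfrak{sl}_2)\hookrightarrow U_q(\mathfrak{gl}_2)$ recovers the non-degenerate pairing of Example~\ref{ex:sl2} and becomes degenerate only through the relation $\dq{2}=1$. Since $\dq{2}$ is central and grouplike, both sides carry a $\mZ$-grading by the power of the quantum determinant, matched on the $U_q(\mathfrak{gl}_2)$-side against the central grouplike $J_1J_2$ (one checks $J_1J_2$ commutes with $E,F$ and pairs with $\dq{2}$ to a nonzero power of $q$), and the pairing is homogeneous for these gradings. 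I would therefore split any radical element according to this grading and, on each graded component, combine the $SL_2$-non-degeneracy with non-degeneracy in the determinant direction to conclude triviality of the radical. I expect the main obstacle to be exactly this last step: verifying that the quantum-determinant direction is genuinely transverse to the $\mathfrak{sl}_2$ part of the pairing, so that the two non-degeneracies combine without interference, while keeping track of the $q$-power normalisations in \eqref{eq:qdet2} and of the integral grouplike $J_1J_2$.
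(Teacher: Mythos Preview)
Your setup matches the paper: the surjection $O_q(GL_2)\twoheadrightarrow A_\cC$ and the reduction to non-degeneracy of the pairing with $U_q(\mathfrak{gl}_2)$ are exactly what is established in Example~\ref{ex:gl2} and the surrounding discussion. The paper's proof then consists of a single sentence citing Takeuchi \cite{Tak}*{4.4.~Theorem} (and \cite{KS}) for the non-degeneracy of this pairing; this is the $n=2$ instance of the general result later recorded as Proposition~\ref{prop:Tak-GLn}.

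Your alternative route---reducing non-degeneracy to the $\mathfrak{sl}_2$ case via a grading---can be completed, and the obstacle you flag is not serious once phrased correctly. The relevant grading on $O_q(GL_2)$ is the total degree in $a,b,c,d$ (with $\dq{2}^{-1}$ in degree $-2$), not literally ``powers of the determinant''. Since $J_1J_2$ is central grouplike and $\inner{x^i_j,J_1J_2}=q\,\delta^i_j$, one has $\inner{f_n,(J_1J_2)^m h}=q^{nm}\inner{f_n,h}$ for $f_n$ homogeneous of degree $n$; a Vandermonde argument (using that $q$ is not a root of unity) then reduces any radical element to its homogeneous components $f_n$. Each $f_n$ lies in the left radical of the restricted pairing with $U_q(\mathfrak{sl}_2)$, hence in the ideal $(\dq{2}-1)$ by the $SL_2$ result. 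But a homogeneous element in $(\dq{2}-1)$ must vanish: writing $f_n=(\dq{2}-1)g$ with $g=\sum_m g_m$ and comparing degrees forces $g_m=\dq{2}\,g_{m-2}$ for all $m\neq n$, and since $g$ has bounded support and $\dq{2}$ is a non-zero-divisor this yields $g=0$. This gives a self-contained proof avoiding the external citation, at the cost of some bookkeeping; the paper's approach is quicker but defers the work to Takeuchi.
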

\begin{proof}
By \cite{Tak}*{4.4. Theorem}, setting $R=\Bbbk$, (see also \cite{KS}*{Theorem~18, Section~9.4}), the above pairing $\inner{-,-}\colon O_q(GL_2)\otimes U_q(\mathfrak{gl}_2)\to \Bbbk$ is non-degenerate. This implies the claim.
\end{proof}

\subsection{The algebra \texorpdfstring{$A_\cC$}{AC} as an internal hom object}

We now return to the general setting when $\cC$ is an $R$-linear braided tensor category with duals. We will see that the algebra $A_\cC$ in $\wcC^\op\boxtimes \wcC$ can, equivalently, be described as an internal hom object. For this, consider the functor 
\begin{align}\label{functor-T}
T\colon \cC^\op\boxtimes \cC\to \cC, \quad X\boxtimes Y\mapsto X\otimes Y,
\end{align}
for the exterior tensor product (Kelly tensor product) $\boxtimes=\boxtimes^{\mathrm{f}}_R$ (see Appendix \ref{sec:tensor-cocompletion}).
This functor $T$, 
together with the structural isomorphisms,
$$\xymatrix{
T((X_1\boxtimes Y_1)\otimes (X_2\boxtimes Y_2))=X_2\otimes X_1 \otimes Y_1\otimes Y_2\ar@<-10pt>[d]^{\mu^T_{X_1\boxtimes Y_1,X_2\boxtimes Y_2}:= \Psi_{X_2,X_1\otimes Y_1}\otimes \Id}\\
T(X_1\boxtimes Y_1)\otimes T(X_2\boxtimes Y_2)=X_1\otimes Y_1 \otimes X_2\otimes Y_2
}
$$
is a strong monoidal functor. Now, $T$ has a right adjoint $R\colon \cC\to \cC^\op\boxtimes \cC$ if and only if the internal homs $\uHom(\one,Z)$ exist for all objects $Z$ in $\cC$, cf. e.g., \cite{EGNO}*{Example~7.9.10}. More generally, the internal hom exists as a functor 
$$R=\uHom(\one, -)\colon \cC\to \wcC^\op\boxtimes\wcC.$$ Indeed, this right adjoint $R(Y)$ is given by the object
$$R(Y)\colon (\cC^\op\boxtimes \cC)^\vee\to \lMod{R},\qquad V\boxtimes W\mapsto \Hom_{\cC}(V\otimes W,Y)$$
in $\wcC^\op\boxtimes \wcC$.
As the functor $\widehat{T}\colon \wcC^\op\boxtimes \wcC\to \wcC$ is, by assumption, cocontinuous, $R(Y)$ is a  left  exact contravariant functor  and hence defines an object in $\wcC^\op\boxtimes \wcC$.

\smallskip

Note that the above tensor functor $T$ turns $\cC$ into a left $\cC^\op\boxtimes \cC$-module category via the categorical action given by 
$$(\cC^\op\boxtimes \cC)\times \cC\to \cC, \quad (X\boxtimes Y,Z)\mapsto T(X\boxtimes Y)\otimes Z=X\otimes Y\otimes Z.$$
This way, one shows that the coend $A_\cC$ is equivalent to an internal hom object.

\begin{lem}
The algebra $A_\cC$ is isomorphic as an algebra in $\wcC^\op\boxtimes \wcC$ to the internal hom object $\uEnd_{\cC^\op\boxtimes \cC}(\one)$. 
\end{lem}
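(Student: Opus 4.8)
The plan is to identify the internal endomorphism object $\uEnd_{\cC^\op\boxtimes\cC}(\one)$ with the value $R(\one)$ of the right adjoint $R=\uHom(\one,-)$ of $T$, and then to compare it directly with the coend $A_\cC$. Since $T$ is strong monoidal, doctrinal adjunction makes $R$ lax monoidal, so that $R(\one)$ is canonically an algebra in $\wcC^\op\boxtimes\wcC$: its multiplication is the transpose across $T\dashv R$ of
$$T\big(R(\one)\otimes R(\one)\big)\xrightarrow{\ \mu^T\ }TR(\one)\otimes TR(\one)\longrightarrow\one\otimes\one\cong\one,$$
where the second arrow is the tensor product of two copies of the adjunction counit, and its unit is the transpose of the unit constraint of $T$. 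This is precisely the algebra structure carried by $\uEnd_{\cC^\op\boxtimes\cC}(\one)$, so it suffices to exhibit an algebra isomorphism $A_\cC\cong R(\one)$.

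First I would construct a comparison morphism $\kappa\colon A_\cC\to R(\one)$ from the universal property of the coend. For each object $X$ the evaluation $\ev_X\colon X^*\otimes X=T(X^*\boxtimes X)\to\one$ transposes, across $T\dashv R$, to a morphism $\kappa_X\colon X^*\boxtimes X\to R(\one)$. The family $(\kappa_X)_X$ is a cowedge: the identity $\kappa_Y\circ(f^*\boxtimes\Id_Y)=\kappa_Z\circ(\Id_{Z^*}\boxtimes f)$ required in \eqref{eq:naturality-iX} becomes, after transposing, the relation $\ev_Y\circ(f^*\otimes\Id_Y)=\ev_Z\circ(\Id_{Z^*}\otimes f)$, which is exactly the defining property of the dual morphism $f^*$. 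The universal property then produces a unique $\kappa$ with $\kappa\circ i_X=\kappa_X$.

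Next I would check that $\kappa$ is an isomorphism in $\wcC^\op\boxtimes\wcC$. Since the objects $V\boxtimes W$ of $\cC^\op\boxtimes\cC$ form a dense generating family of the cocompletion and hence detect isomorphisms, it is enough to see that $\Hom(V\boxtimes W,\kappa)$ is a bijection for all $V,W$. On the target, $T\dashv R$ gives $\Hom(V\boxtimes W,R(\one))\cong\Hom_\cC(V\otimes W,\one)$; on the source one identifies $\Hom(V\boxtimes W,A_\cC)$ with the same space, using rigidity together with the concrete description of $A_\cC$ (as a subalgebra of $H^\circ$ in Examples~\ref{ex:finiteHopf} and \ref{ex:filteredHopf}, or $A_\cC\cong H^*$ in the finite case of Lemma~\ref{lem:finiteHopf}). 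Tracking $i_X$, $\kappa_X$ and $\ev_X$ through these identifications should show that $\Hom(V\boxtimes W,\kappa)$ is precisely the resulting bijection.

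The hard part will be verifying that $\kappa$ is an algebra homomorphism, since the two multiplications are built from different data. On $A_\cC$ the product $m_{A_\cC}$ comes from $i_{X\otimes Y}$ and the braiding-free duality isomorphism $(X\otimes Y)^*\cong Y^*\otimes X^*$, whereas on $R(\one)$ it is assembled from the strong monoidal constraint $\mu^T=\Psi_{X_2,X_1\otimes Y_1}\otimes\Id$ and the adjunction counit, and so visibly involves the braiding $\Psi$. The task is to show these agree: transposing $\kappa\circ m_{A_\cC}$ across $T\dashv R$ and substituting the definitions of $\kappa_X$, $\kappa_Y$ and $\mu^T$, naturality of the braiding with respect to $\ev_X$ makes the constraint $\mu^T$ collapse once the evaluations are applied, so that the composite of $\ev_X$ and $\ev_Y$ reduces to the nested evaluation equal to $\ev_{X\otimes Y}$; the analogous, simpler, check for units uses $i_\one$ and the counit at $\one$. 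Combining the object isomorphism with these two compatibilities yields the claimed isomorphism of algebras in $\wcC^\op\boxtimes\wcC$.
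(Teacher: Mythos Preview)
Your overall strategy---build $\kappa\colon A_\cC\to R(\one)$ from the transposes of the evaluations, test that it is an isomorphism against the generators $V\boxtimes W$, and then verify multiplicativity---is the same route the paper takes. The paper phrases it as showing $A_\cC$ satisfies the universal property of $\uEnd(\one)$ by exhibiting mutually inverse natural bijections $\alpha_{Y,Z}$ and $\beta_{Y,Z}$, but unwinding those bijections produces exactly your $\kappa$ and the check that $\Hom(V\boxtimes W,\kappa)$ is bijective.

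There is, however, a genuine gap in your step~3. The lemma is stated for an arbitrary $R$-linear ribbon category $\cC$, yet to identify $\Hom(V\boxtimes W,A_\cC)$ with $\Hom_\cC(V\otimes W,\one)$ you appeal to the \emph{concrete} model of $A_\cC$ inside $H^\circ$ (Examples~\ref{ex:finiteHopf}, \ref{ex:filteredHopf}, Lemma~\ref{lem:finiteHopf}), which is only available when $\cC$ arises from a Hopf algebra. For general $\cC$ you need an abstract argument. The paper supplies one: since $V\boxtimes W$ is compact in the cocompletion, $\Hom(V\boxtimes W,-)$ commutes with the filtered colimit defining $A_\cC$, so a morphism $V\boxtimes W\to A_\cC$ factors through some $X^*\boxtimes X$; composing with $\ev_X$ gives the map $\alpha$ to $\Hom_\cC(V\otimes W,\one)$. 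The inverse $\beta$ uses rigidity directly: from $f\colon V\otimes W\to\one$ one extracts $f'\colon W\to{}^*V$ via the right dual and then sets $\beta(f)=i_{{}^*V}\circ((f')^*\boxtimes\Id)=i_W\circ(\Id\boxtimes f')$. Your phrase ``using rigidity'' gestures at this, but the actual mechanism---compactness to reduce to a single $X^*\boxtimes X$, then rigidity to produce the explicit inverse---is missing, and the Hopf-algebra description you invoke instead does not cover the general statement.

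Your step~4 is correct in outline and matches the paper's diagram chase; the identity you need is $\ev_{X\otimes Y}=\ev_X(\Id\otimes\ev_Y)\mu^T_{X^*\boxtimes X,Y^*\boxtimes Y}$, and it does follow from naturality of $\Psi$ with respect to $\ev$ as you say.
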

\begin{proof}
The internal hom $\uEnd(\one)=\uEnd_{\cC^\op\boxtimes \cC}(\one)$ satisfies the universal property that there exists a natural isomorphism
$$\xymatrix{
\Hom_{\wcC^\op\boxtimes \wcC}(Y\boxtimes Z, \uEnd(\one))\ar@<5pt>[rr]^-{\alpha_{Y,Z}}&&\ar@<5pt>[ll]^-{\beta_{Y,Z}}\Hom_\cC(Y\otimes Z,\one).
}$$
We show that $A_\cC$ satisfies this universal property.

To construct $\alpha_{Y,Z}$ we use that $Y,Z\in \cC$ are compact objects in the cocompletion $\wcC$ see Section~\ref{sec:appendix} and hence $\Hom_{\wcC^\op\boxtimes \wcC}(Y\boxtimes Z, -)$ commutes with filtered (or, equivalently, directed) colimits. Thus, $\alpha_{Y,Z}$ is the colimit of the component maps
$$\Hom_{\cC^\op\boxtimes \cC}(Y\boxtimes Z, X^*\boxtimes X)\to \Hom_\cC(Y\otimes Z,\one), \quad f\boxtimes g\mapsto \ev_X(f\otimes g).$$

Using the natural isomorphisms 
$$\Hom_{\cC}(Y\otimes Z,\one)\cong \Hom_{\cC}(Z,{}^*Y),\quad  f\mapsto f'=(\Id_{{}^*Y}\otimes f)(\coev^r_{Y}\otimes \Id_Z),$$ 
obtained from the right dual ${}^*Y$, we define $\beta_{Y,Z}(f)$, for $f\colon Y\otimes Z\to \one$,  as the diagonal composition in the commutative diagram
$$
\xymatrix{
& Z^*\boxtimes Z\ar[dr]^{i_Z}&\\
Y\boxtimes Z=({}^*Y)^*\boxtimes Z\ar[ur]^{(f')^*\boxtimes \Id}\ar[dr]_{\Id\boxtimes f'}\ar[rr]^-{\beta_{Y,Z}(f)}&&A_\cC.\\
& Y\boxtimes {}^*Y\ar[ur]_{i_{{}^*Y}}&
}
$$
One checks that $\alpha$ and $\beta$ indeed define mutually inverse natural transformations. 

A general object $B$ in $\cC^\op\boxtimes \cC$ is a finite colimit $B=\colim X_i\boxtimes Y_i$  \cite{Kel}*{Theorem~5.35}. A morphism in $\Hom_{\wcC^\op\boxtimes \wcC}(B,A_\cC)$ is determined by the data of a compatible system of morphisms of morphisms $X_i\boxtimes Y_i\to A_\cC$ which correspond to morphisms $X_i\otimes Y_i\to \one$ using the natural isomorphisms $\alpha_{X_i,Y_i}$ and $\beta_{X_i,Y_i}$. As $T$ is colimit preserving, this system of morphisms determines a unique morphism $T(B)\to \one$.  Thus, we have natural isomorphisms 
$$\xymatrix{
\Hom_{\wcC^\op\boxtimes \wcC}(B,A_\cC)\ar@/^/[rr]^{\alpha_B}&&\ar@/^/[ll]^{\beta_B}\Hom_\cC(T(B),\one)
}$$
and $A_\cC$ satisfies the universal property of $\uEnd(\one)$.

It remains to check that the algebra structure induced on $A_\cC$ as in Definition \ref{defn:AC} coincides with the one induced on $\uEnd(\one)$ similarly to \cite{EGNO}*{Section~7.9}. Note that any morphism $f\colon A_\cC\to B$ in $\wcC^\op\boxtimes \wcC$  is determined by a compatible diagram of morphisms $f_X\colon X^*\boxtimes X\to B$ such that $f_X=f\circ i_X$. 
Clearly, $\Id_X\circ i_X=i_X$. Thus, we obtain a morphism $\ev_{\one, \one}$ determined by the commutative diagrams
$$\xymatrix{
X^*\otimes X\ar[dr]^{T(i_X)}\ar[rr]^{\ev_X}&&\one\\
&T(A_\cC)\ar[ru]^{\ev_{\one,\one}}&
}$$
for all objects $X$ in $\cC$ and $\alpha_B(f)=\ev_{\one,\one}(f)$ for all $f$.

Denote by 
$$m'\colon A_\cC\otimes A_\cC\to A_\cC$$
the multiplication map induced by the universal property of the internal hom $A_\cC=\uEnd(\one)$. It is determined as the unique morphism making the diagram
$$
\xymatrix{
T(A_\cC\otimes A_\cC)\ar[d]^{T(m')}\ar[rr]^-{\mu^T_{A_\cC,A_\cC}}&&T(A_\cC)\otimes T(A_\cC)\ar[rr]^{\Id\otimes \ev_{\one,\one}}&&T(A_\cC)\ar[d]^{\ev_{\one,\one}}\\
T(A_\cC)\ar[rrrr]^{\ev_{\one,\one}}&&&&\one
}
$$
commute. By precomposing with $$T((X^*\boxtimes X)\otimes (Y^*\boxtimes Y))=X^*\otimes X\otimes Y^*\otimes Y\xrightarrow{T(i_X\otimes i_Y)} T(A_\cC\otimes A_\cC)$$
we see that the multiplication $m$
 defined before Definition \ref{defn:AC} has this property from the commutative diagram below.
$$
\xymatrix{
&&Y^*\otimes X^*\otimes X\otimes Y\ar@/_/[dll]|-{T(i_X\otimes i_Y)}\ar@/_2pc/[dddll]|-{T(i_{X\otimes Y)}}\ar@/_4pc/[ddd]|-{\ev_{X\otimes Y}}\ar[d]^{\mu^T}\ar@/^/[drr]|-{T(i_X\otimes i_Y)}&&\\
T(A_\cC\otimes A_\cC)\ar[dd]_{T(m)}&&X^*\otimes X\otimes Y^*\otimes Y\ar@/^/[rrd]|-{T(i_X)\otimes T(i_Y)}\ar[d]^{\Id\otimes \ev_Y}&& T(A_\cC\otimes A_\cC)\ar[d]^{\mu^T}\\
&&X^*\otimes X\ar[d]^{\ev_X}\ar@/^/[rrd]|-{T(i_X)}&&T(A_\cC)\otimes T(A_\cC)\ar[d]^{\Id\otimes \ev_{\one,\one}}\\
T(A_\cC)\ar[rr]^{\ev_{\one,\one}}&&\one&& T(A_\cC)\ar[ll]_{\ev_{\one,\one}}.
}
$$
Here, the left top diagram commutes by definition of $m$, and we have used that 
$$\ev_{X\otimes Y}=\ev_Y(\Id\otimes \ev_X)\mu^T_{X^*\boxtimes X,Y^*\boxtimes Y}$$
identifying $(X\otimes Y)^*=Y^*\otimes X^*$. Thus, $m=m'$. Finally, for the unit $u\colon \one \to A_\cC$ from before Definition \ref{defn:AC} satisfies $T(u)=T(i_\one)$ and is hence equal to the unit $u'\colon \one \to A_\cC=\uEnd(\one)$ obtained from the universal property of $\uEnd(\one)$.
\end{proof}

In particular, $A_\cC$ is isomorphic to $R(\one)$ with its natural structure as an algebra object in $\wcC^\op\boxtimes \wcC$ obtained using the lax monoidal structure of $R$. We remark that $R(\one)=\uEnd_{\cC^\op\boxtimes\cC}(\one)$ is the \emph{canonical algebra} of \cite{EGNO}*{Section 7.18}.
 In some cases, it is not necessary to pass to cocompletions and $A_\cC$ is an object of $\cC$. 

\begin{lem}\label{lem:finite-case1}
If $\cC$ is a $\Bbbk$-linear finite tensor category in the sense of \cite{EGNO}, then $A_\cC$ exists as an object in $\cC^\op\boxtimes \cC$.
\end{lem}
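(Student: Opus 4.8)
The plan is to leverage the identification, established in the preceding lemma, of $A_\cC$ with the internal hom $\uEnd_{\cC^\op\boxtimes\cC}(\one)=R(\one)$, where $R$ is the right adjoint of the tensor functor $T\colon\cC^\op\boxtimes\cC\to\cC$. A priori $R$ takes values in the cocompletion $\wcC^\op\boxtimes\wcC$, and the entire content of the claim is that, when $\cC$ is finite, this right adjoint already lands in $\cC^\op\boxtimes\cC$, so that no cocompletion is needed. First I would record two structural facts. Since $\cC$ is a finite abelian $\Bbbk$-linear category, so is $\cC^\op$, and the Deligne tensor product $\cC^\op\boxtimes\cC$ of two finite abelian categories is again finite abelian (cf.\ \cite{EGNO}*{Section~1.11}). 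Second, the functor $T$, given on pure tensors by $X\boxtimes Y\mapsto X\otimes Y$, is exact: rigidity of $\cC$ makes $\otimes$ biexact, and since short exactness is self-dual the inducing bifunctor $\cC^\op\times\cC\to\cC$ is exact in each variable, whence $T$ is (right) exact.

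The key step is then to invoke the following finiteness principle: any right exact $\Bbbk$-linear functor between finite abelian categories admits a right adjoint valued in the target category itself. Concretely, identifying $\cC^\op\boxtimes\cC\simeq\mathsf{mod}\text{-}A$ and $\cC\simeq\mathsf{mod}\text{-}B$ for finite-dimensional algebras $A,B$, the Eilenberg--Watts theorem presents $T$ as $-\otimes_A M$ for an $(A,B)$-bimodule $M$, whose right adjoint $\Hom_B(M,-)$ manifestly lands in $\mathsf{mod}\text{-}A\simeq\cC^\op\boxtimes\cC$. Thus $T$ has a right adjoint $R\colon\cC\to\cC^\op\boxtimes\cC$, and $A_\cC\cong R(\one)$ is an object of $\cC^\op\boxtimes\cC$, carrying its algebra structure via the lax monoidal structure of $R$ exactly as before.

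The main obstacle is precisely ensuring that the right adjoint is valued in the uncompleted category $\cC^\op\boxtimes\cC$ rather than merely in $\wcC^\op\boxtimes\wcC$: in general coends of the form $\int^{X}X^*\boxtimes X$ require infinite colimits and only exist after cocompletion, and it is the representability of right exact functors by bimodules --- a phenomenon special to module categories over finite-dimensional algebras --- that rescues finiteness here. An alternative, more hands-on route would bypass adjoints and compute the coend directly: choosing a projective generator $G$ of $\cC$ with $\End(G)=A$, one expresses $A_\cC$ as the cokernel of a single map $G^*\boxtimes(A\otimes G)\to G^*\boxtimes G$ built from the $A$-action, exhibiting $A_\cC$ as a finite colimit and hence as an object of the finite abelian category $\cC^\op\boxtimes\cC$. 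Either route reduces the nominally infinite coend to finite data; I would present the adjoint-functor argument as the cleaner one and mention the generator computation as a sanity check.
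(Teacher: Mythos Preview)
Your proposal is correct and follows essentially the same approach as the paper: both use the preceding identification $A_\cC\cong\uEnd_{\cC^\op\boxtimes\cC}(\one)$ and then invoke existence of internal homs in the finite setting. The paper's proof is a one-line citation to \cite{EGNO}*{Section~7.9} for this existence, whereas you unpack that citation via the Eilenberg--Watts argument (right exact functor between finite abelian categories $\Rightarrow$ right adjoint exists), which is precisely the mechanism behind the EGNO result.
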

\begin{proof}
This follows from the isomorphism $A_\cC\cong \uEnd_{\cC^\op\boxtimes\cC}(\one)$ and existence of internal homs in $\Bbbk$-linear finite tensor categories \cite{EGNO}*{Section~7.9}.
\end{proof}

\subsection{The reflection equation algebra \texorpdfstring{$B_\cC$}{BC}}\label{sec:BC}
We will now introduce the main object of interest in this paper, the reflection equation algebra $B_\cC$, as the image of the FRT algebra $A_\cC$ under the tensor product functor. This approach follows \cite{EGNO}*{Exercise~8.25.7} and agrees with the definition of $B_\cC$ as a coend.

The tensor functor $T\colon \cC^\op\boxtimes \cC\to \cC$ from \eqref{functor-T} has an extension 
$$\widehat{T}\colon \wcC^\op\boxtimes \wcC\to \wcC$$
to cocompletions which preserves colimits, see Proposition~\ref{prop:wcC}. Thus there is a canonical isomorphism 
$$\widehat{T}(A_\cC)=\widehat{T}\bigg(\int^{X\in \cC}X^*\boxtimes X\bigg)\cong \int^{X\in \cC}X^*\otimes X$$
of coends, which is an isomorphism of algebras in $\wcC$.

\begin{defn}\label{defn:BC}
We denote 
$$B_\cC:=\int^{X\in \cC}X^*\otimes X\,\in\, \wcC $$
and call $B_\cC$ the \emph{reflection equation algebra} associated to $\cC$.
\end{defn}

We will now show how $B_\cC$ satisfies the universal properties postulated in \cite{Majid95book}*{Chapter~9}, cf. \cite{Sch}*{2.1.9. Lemma}.

\begin{lem}\label{lem:representability1}
For any object $Y$ of $\cC$, there are natural isomorphisms
$$\Hom_{\wcC}(B_\cC,Y)\cong \Nat(\Id_\cC,\Id_\cC\otimes Y).$$
\end{lem}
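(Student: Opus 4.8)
The plan is to exhibit the isomorphism as a composite of three natural isomorphisms, following the standard pattern by which a coend of the form $\int^X X^*\otimes X$ corepresents natural transformations out of the identity functor. Since $B_\cC=\int^{X\in\cC}X^*\otimes X$ is a colimit in $\wcC$ and each object $X^*\otimes X$, as well as $Y$, lies in $\cC$, which is fully faithfully embedded in $\wcC$, the contravariant functor $\Hom_{\wcC}(-,Y)$ turns this colimit into a limit. Concretely, the universal dinatural maps $i_X\colon X^*\otimes X\to B_\cC$ identify
$$\Hom_{\wcC}(B_\cC,Y)\;\cong\;\int_{X\in\cC}\Hom_\cC(X^*\otimes X,Y),$$
the end consisting of families $\phi=\{\phi_X\colon X^*\otimes X\to Y\}_{X\in\cC}$ satisfying the wedge condition $\phi_X\circ(f^*\otimes\Id_X)=\phi_{X'}\circ(\Id_{X'^*}\otimes f)$ for every morphism $f\colon X\to X'$, this being precisely the condition dual to \eqref{eq:naturality-iX} transported through $\widehat{T}$.

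Next I would use the duality adjunction $X^*\otimes(-)\dashv X\otimes(-)$, whose unit and counit are $\coev_X\otimes\Id$ and $\ev_X\otimes\Id$, to identify pointwise
$$\Hom_\cC(X^*\otimes X,Y)\;\cong\;\Hom_\cC(X,X\otimes Y).$$
Explicitly, a family member $\phi_X$ is sent to $\eta_X:=(\Id_X\otimes\phi_X)(\coev_X\otimes\Id_X)\colon X\to X\otimes Y$, with inverse assignment $\eta_X\mapsto(\ev_X\otimes\Id_Y)(\Id_{X^*}\otimes\eta_X)$; the two assignments are mutually inverse by the zigzag identities for the left dual $X^*$. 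Under the end formula for natural transformations, $\int_{X}\Hom_\cC(X,X\otimes Y)=\Nat(\Id_\cC,\Id_\cC\otimes Y)$, so it remains only to check that this pointwise bijection carries the wedge condition on $\{\phi_X\}$ to the naturality condition $(f\otimes\Id_Y)\circ\eta_X=\eta_{X'}\circ f$ defining an element of $\Nat(\Id_\cC,\Id_\cC\otimes Y)$.

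This last compatibility is the main obstacle, and amounts to a diagram chase. Starting from the wedge condition, I would expand the dual morphism $f^*=(\ev_{X'}\otimes\Id_{X^*})(\Id_{X'^*}\otimes f\otimes\Id_{X^*})(\Id_{X'^*}\otimes\coev_X)$, precompose both sides with the appropriate coevaluation, and repeatedly apply the snake identities to transform the two sides into $(f\otimes\Id_Y)\circ\eta_X$ and $\eta_{X'}\circ f$ respectively; the converse implication is the same computation read backwards, using that the assignment $\phi_X\leftrightarrow\eta_X$ is a bijection. Finally, all three isomorphisms are manifestly natural in $Y$ (the first by naturality of the coend's universal property, the second since $\ev_X$ and $\coev_X$ do not involve $Y$, the third by functoriality of $\Nat$), so their composite is a natural isomorphism $\Hom_{\wcC}(B_\cC,Y)\cong\Nat(\Id_\cC,\Id_\cC\otimes Y)$, as claimed.
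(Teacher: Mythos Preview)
Your proof is correct and follows essentially the same approach as the paper: identify $\Hom_{\wcC}(B_\cC,Y)$ with dinatural families $\{\phi_X\colon X^*\otimes X\to Y\}$ via the universal property of the coend, then use the duality isomorphism $\phi_X\mapsto(\Id_X\otimes\phi_X)(\coev_X\otimes\Id_X)$ with inverse built from $\ev_X$. The paper's proof is terser and leaves the verification that the wedge condition corresponds to naturality of $\eta$ implicit, whereas you spell this out.
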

\begin{proof}
Denote the component map of the coend by
$$d_Y\colon Y^*\otimes Y\to \int^{X\in \cC} X^*\otimes X=B_\cC.$$
Then, by the universal property, we have an identification of $\Hom_{\wcC}(B_\cC,Y)$ with the set 
$$
\Set{(g_X\colon X^*\otimes X\to Y)_{X\in \cC}\;\vert \; g_X(f^*\otimes \Id_X)=g_Z(\Id_{Z^*}\otimes f),\, \forall f\colon X\to Z}.
$$
Now, using the coevaluation map $\coev_X\colon \one \to X\otimes X^*$, we define 
$$h_X:=(\Id_X\otimes g_X)(\coev_X\otimes \Id_X)\colon X\to X\otimes Y,$$
and obtain a natural transformation $h=(h_X)_{X\in \cC}\colon \Id_\cC\to \Id_\cC\otimes Y$. Conversely, given such a natural transformation $h\colon \Id_\cC\to \Id_\cC\otimes Y$, one uses $\ev_X\colon X^*\otimes X\to \one$ to obtain an element $(g_X)_{X\in \cC}$ in the colimit and hence a morphism from the coend.
\end{proof}

In other words, $B_\cC$ represents the functor
$$\cC\to \lMod{R}, \quad Y \mapsto \Nat(\Id,\Id\otimes Y).$$
This condition, together with the higher representability, is imposed in \cite{Majid95book}*{Section~9.4}. Here, we consider the functors 
$$\Id_\cC^{\otimes n}\otimes Y \colon \cC^{\boxtimes n}\to \cC, \quad X_1\boxtimes \ldots \boxtimes X_n\mapsto X_1\otimes \ldots \otimes X_n\otimes Y.$$

\begin{lem}\label{lem:representabilityn}
For any object $Y$ of $\cC$ and $n\geq 0$, there are natural isomorphisms
$$\Hom_{\wcC}(B_\cC^{\otimes n},Y)\cong \Nat(\Id_\cC^{\otimes n},\Id_\cC^{\otimes n}\otimes Y)$$
of functors $\cC\to \lMod{R}$.
\end{lem}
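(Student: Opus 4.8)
The plan is to reduce the statement, via rigidity and the duality between $\Hom$ and coends, to a single isomorphism of coends that encodes the braiding, thereby generalising the construction in the proof of \cref{lem:representability1}. Write $P_{\un X}=X_1\otimes\cdots\otimes X_n$ for a tuple $\un X=(X_1,\ldots,X_n)$ of objects of $\cC$. The set $\Nat(\Id_\cC^{\otimes n},\Id_\cC^{\otimes n}\otimes Y)$ is computed by an end, so that
\[
\Nat(\Id_\cC^{\otimes n},\Id_\cC^{\otimes n}\otimes Y)\cong \int_{\un X\in\cC^{\boxtimes n}}\Hom_\cC(P_{\un X},P_{\un X}\otimes Y).
\]
First I would apply, for each fixed tuple, the rigidity adjunction $(P_{\un X}^*\otimes-)\dashv(P_{\un X}\otimes-)$ already used for $n=1$, giving a natural isomorphism $\Hom_\cC(P_{\un X},P_{\un X}\otimes Y)\cong \Hom_\cC(P_{\un X}^*\otimes P_{\un X},Y)$. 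Since contravariant $\Hom$ turns a coend into an end, this yields
\[
\Nat(\Id_\cC^{\otimes n},\Id_\cC^{\otimes n}\otimes Y)\cong \int_{\un X}\Hom_\cC(P_{\un X}^*\otimes P_{\un X},Y)\cong \Hom_{\wcC}\Big(\int^{\un X\in\cC^{\boxtimes n}} P_{\un X}^*\otimes P_{\un X},\,Y\Big),
\]
all isomorphisms being natural in $Y$.

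Next I would identify the coend on the right with $B_\cC^{\otimes n}$. Since the tensor product of $\wcC$ is cocontinuous in each variable by \cref{prop:wcC}, the coends defining the factors of $B_\cC^{\otimes n}=B_\cC\otimes\cdots\otimes B_\cC$ may be pulled out of the tensor product, and the Fubini theorem for coends rewrites the resulting iterated coend as a single coend over the product category, giving $B_\cC^{\otimes n}\cong \int^{\un X\in\cC^{\boxtimes n}} X_1^*\otimes X_1\otimes\cdots\otimes X_n^*\otimes X_n$. It therefore remains to produce an isomorphism of coends
\[
\int^{\un X} (X_1\otimes\cdots\otimes X_n)^*\otimes(X_1\otimes\cdots\otimes X_n)\;\cong\; \int^{\un X} X_1^*\otimes X_1\otimes\cdots\otimes X_n^*\otimes X_n ,
\]
after which applying $\Hom_{\wcC}(-,Y)$ gives $\Nat(\Id_\cC^{\otimes n},\Id_\cC^{\otimes n}\otimes Y)\cong\Hom_{\wcC}(B_\cC^{\otimes n},Y)$, naturally in $Y$.

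This last isomorphism is the heart of the argument and is where the braiding enters. Identifying $(X_1\otimes\cdots\otimes X_n)^*=X_n^*\otimes\cdots\otimes X_1^*$, the two integrands are $X_n^*\otimes\cdots\otimes X_1^*\otimes X_1\otimes\cdots\otimes X_n$ and the interleaved object $X_1^*\otimes X_1\otimes\cdots\otimes X_n^*\otimes X_n$. I would construct an isomorphism between them by repeatedly applying the braiding $\Psi$ of $\cC$ to move each $X_i^*$ across the intervening factors until it sits immediately to the left of the corresponding $X_i$; naturality of $\Psi$ in each slot, together with the functoriality of $(-)^*$, makes this family natural in every $X_i$ and so induces the desired isomorphism of coends.

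The main obstacle I expect is the bookkeeping at this last step: one must check that the braiding reshuffle is genuinely dinatural, i.e.\ compatible with the wedge conditions of both coends, which comes down to naturality of $\Psi$ and of $\ev,\coev$ together with the hexagon axioms, and that the rigidity adjunction of the first paragraph respects the dinaturality of the end as the tuple $\un X$ varies. The remaining ingredients---cocontinuity of $\otimes$ on $\wcC$ and the Fubini theorem for coends in the cocompletion---are supplied by \cref{prop:wcC} and the generalities on coends, so no further input is needed.
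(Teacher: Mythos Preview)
Your proposal is correct and follows essentially the same approach as the paper: both use cocontinuity of $\otimes$ to present $B_\cC^{\otimes n}$ as a coend over tuples $\un X$, invoke the rigidity adjunction to pass between $\Hom_\cC(P_{\un X},P_{\un X}\otimes Y)$ and $\Hom_\cC(P_{\un X}^*\otimes P_{\un X},Y)$, and use the braiding to reshuffle between the interleaved object $X_1^*\otimes X_1\otimes\cdots\otimes X_n^*\otimes X_n$ and $P_{\un X}^*\otimes P_{\un X}$. The paper simply carries out these steps in a slightly different order and more tersely---applying the braiding and rigidity directly at the level of component maps rather than packaging everything as an isomorphism of coends via Fubini---but the mathematical content is the same.
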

\begin{proof}
Since $\otimes$ preserves colimits, $B_\cC^{\otimes n}$ is a colimit with a universal property with respect to the component maps 
$$d_{Y_1}\otimes \ldots d_{Y_n}\colon Y_1^*\otimes Y_1\otimes\ldots \otimes Y_n^*\otimes Y_n\to B_\cC^{\otimes n}.$$
Using the braiding of $\cC$ one obtains a natural isomorphism
$$\Hom_\cC(Y_1^*\otimes Y_1\otimes\ldots \otimes Y_n^*\otimes Y_n,Y)\cong \Hom_\cC(Y_1\otimes\ldots \otimes Y_n,Y_1\otimes \ldots \otimes Y_n\otimes Y).$$
These isomorphisms associate to a morphism $B_\cC^{\otimes n}\to Y$ a natural transformation $\Id^{\otimes n}_\cC\to \Id^{\otimes n}_\cC\otimes Y$, component-wise. Now, proceeding as in Lemma~\ref{lem:representability1} yields the claimed natural isomorphism.
\end{proof}

Note that $B_\cC$ is an algebra object in $\wcC$. In fact, Majid \cite{Maj1}, \cite{Majid95book}*{Section~9.4.2} showed that $B_\cC$ is a (quasitriangular) Hopf algebra object in the cocompletion $\wcC$. To display its Hopf algebra structure, consider:
\begin{enumerate}
\item the natural transformation $$\delta\colon \Id_\cC\to \Id_\cC\otimes B_\cC$$
corresponding to the identity $\Id\colon B_\cC\to B_\cC$ in Lemma \ref{lem:representability1};
\item the natural transformation 
$\mu\colon \Id_\cC\otimes \Id_\cC\to \Id_\cC\otimes \Id_\cC\otimes B_\cC$
given by 
$$\mu_{X\otimes Y}=\delta_{X\otimes Y};$$
\item the natural transformation $\delta^*\colon \Id_\cC\to \Id_\cC\otimes B_\cC$ given by 
$$\delta^*_X=(\Id_X\otimes \ev_X)(\Id_{X\otimes X^*}\otimes \Psi_{B_\cC,X})(\Id\otimes \delta_{X^*}\otimes \Id)(\coev_X\otimes \Id_X)$$
\end{enumerate}

\begin{prop}[Majid]\label{prop:coend-Hopf}
Assume that $\cC$ is a ribbon category. Then
$B_\cC$ is a Hopf algebra in $\wcC$ with product $\mu_{B_\cC}$, unit $u_{B_\cC}$, coproduct $\Delta_{B_\cC}$, unit $\varepsilon_{B_\cC}$, and antipode $S_{B_\cC}$ given component-wise by 
\begin{align*}
\mu_{B_\cC} (d_{X}\otimes d_Y)&=d_{X\otimes Y}(\Psi_{X^*\otimes X,Y^*}\otimes \Id_Y)\colon (X^*\otimes X)\otimes (Y^*\otimes Y)\to B_{\cC},\\
u_{B_\cC}&=d_\one\colon \one \to B_\cC,\\
\Delta_{B_\cC} d_X&=(d_X\otimes d_X)(\Id_{X^*}\otimes \coev_{X}\otimes \Id_X)\colon X^*\otimes X\to B_\cC\otimes B_\cC,\\
\varepsilon_{B_\cC} d_X&=\coev_X\colon X^*\otimes X\to \one,\\
S_{B_\cC} d_X&=d_{X^*}(\phi_X\otimes \Id_{X^*})\Psi_{X^*,X}\colon X^*\otimes X\to B_\cC,
\end{align*}
where $\phi_X=(\ev_X\otimes \Id_{X^{**}})(\Id_{X^*}\otimes \Psi_{X^{**},X})(\coev_{X^*}\otimes \Id_X)$ is the Drinfeld isomorphism.
\end{prop}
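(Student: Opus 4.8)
The plan is to verify, for each of the five structure maps, first that the displayed component-wise formula descends to a genuine morphism of the coend (with codomain $B_\cC$ or $B_\cC\otimes B_\cC$), and only then that the Hopf algebra axioms hold. Throughout I would work in the graphical calculus of the ribbon category $\cC$ and exploit that the tensor functor $\widehat{T}$ is cocontinuous, so that for each $n$ the object $B_\cC^{\otimes n}$ is again a coend, $B_\cC^{\otimes n}\cong \int^{X_1,\ldots,X_n}(X_1^*\otimes X_1)\otimes\cdots\otimes(X_n^*\otimes X_n)$ over $\cC^{\times n}$, with universal component maps $d_{X_1}\otimes\cdots\otimes d_{X_n}$. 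This is what lets me define and compare morphisms out of tensor powers of $B_\cC$ purely through the families $(d_X)_{X\in\cC}$.

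For well-definedness I would argue as follows. To specify $\mu_{B_\cC}\colon B_\cC\otimes B_\cC\to B_\cC$ it suffices, by the universal property of the coend $B_\cC\otimes B_\cC$, to produce a family $(X^*\otimes X)\otimes(Y^*\otimes Y)\to B_\cC$ that is dinatural in $X$ and in $Y$; the candidate is $d_{X\otimes Y}(\Psi_{X^*\otimes X,Y^*}\otimes\Id_Y)$, and dinaturality in each variable follows from naturality of the braiding together with the defining cowedge relation of $d_{X\otimes Y}$. The same pattern handles $\Delta_{B_\cC}$ (whose codomain $B_\cC\otimes B_\cC$ is a coend), $\varepsilon_{B_\cC}$, and $S_{B_\cC}$: in each case dinaturality reduces to naturality of $\Psi$, $\coev$, $\ev$, and of the Drinfeld isomorphism $\phi$. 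For the unit $u_{B_\cC}=d_\one$ there is nothing to check.

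Having established well-definedness, the Hopf axioms can then be verified by precomposing with the component maps $d_X$ (respectively $d_X\otimes d_Y$ and $d_X\otimes d_Y\otimes d_Z$) and comparing the two resulting morphisms out of $(X^*\otimes X)^{\otimes k}$ as string diagrams in $\cC$. Associativity of $\mu_{B_\cC}$ and the braided compatibility of $\mu_{B_\cC}$ with $\Delta_{B_\cC}$ rest on the hexagon identities and naturality of $\Psi$; coassociativity and counitality of $\Delta_{B_\cC}$ follow from coherence of $\coev_X$ and the zig-zag duality identities; and the unit and counit axioms are immediate from the formulas. Alternatively, the whole verification may be routed through the representability isomorphisms $\Hom_{\wcC}(B_\cC^{\otimes n},Y)\cong\Nat(\Id_\cC^{\otimes n},\Id_\cC^{\otimes n}\otimes Y)$, under which the structure maps correspond to natural operations on transformations and the axioms become elementary identities; this is exactly Majid's braided reconstruction, so the statement may also be cited from \cite{Majid95book}*{Section~9.4.2} after matching conventions.

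The delicate point, which I expect to be the main obstacle, is the antipode axiom $\mu_{B_\cC}(S_{B_\cC}\otimes\Id)\Delta_{B_\cC}=u_{B_\cC}\varepsilon_{B_\cC}=\mu_{B_\cC}(\Id\otimes S_{B_\cC})\Delta_{B_\cC}$. Unwinding $S_{B_\cC}d_X=d_{X^*}(\phi_X\otimes\Id_{X^*})\Psi_{X^*,X}$ introduces both a braiding and the Drinfeld isomorphism $\phi_X$, and checking the convolution-inverse identities genuinely requires the full ribbon structure: one must use the compatibility of $\tau$ with the pivotal structure \eqref{eq:pivotal-structure} together with $(\theta_X)^*=\theta_{X^*}$ in order to slide $\phi_X$ past the duality morphisms and cancel the two sides. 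This is precisely where the ribbon (rather than merely braided) hypothesis enters, and I would discharge it either by a careful graphical computation or by appealing directly to Majid's result.
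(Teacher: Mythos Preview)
Your proposal is correct, and in fact you already describe the paper's own argument as your ``alternative'' route: the paper's proof is essentially a citation to Majid's braided reconstruction \cite{Majid95book}*{Chapter~9}, defining $\mu_{B_\cC}$, $u_{B_\cC}$, $\Delta_{B_\cC}$, $\varepsilon_{B_\cC}$, $S_{B_\cC}$ as the morphisms corresponding under Lemmas~\ref{lem:representability1} and~\ref{lem:representabilityn} to the natural transformations $\mu$, $\delta_\one$, $\delta_{B_\cC}$, $\Id_\cC$, and $\delta^*$ introduced just before the proposition, and then noting that the component-wise formulas follow by translating $\delta_X$ into $d_X$ under those same isomorphisms. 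The Hopf axioms are not verified in the paper but deferred to Majid.

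Your primary approach---checking dinaturality of the displayed families directly and then verifying the Hopf axioms by string-diagram computations after precomposing with $d_X$---is a genuinely different and more self-contained route. It buys independence from the reconstruction machinery at the cost of several graphical verifications, the antipode identity being (as you note) the only one that genuinely uses the ribbon twist. The paper's route is shorter but relies on having the representability lemmas and Majid's theorem already in hand; your direct route would make the proposition stand alone.
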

\begin{proof}
Using the ideas of braided reconstruction theory \cite{Majid95book}*{Chapter~9}, we define the Hopf algebra structure by the
\begin{itemize}
\item product given by the map $\mu_{B_\cC}\colon B_\cC\otimes B_\cC\to B_\cC$ corresponding to $\mu$ under Lemma \ref{lem:representabilityn},
\item unit $u_{B_\cC}=\delta_{\one}\colon \one\to B_\cC$,
\item coproduct $\Delta_{B_\cC}=\delta_{B_\cC}$,
\item counit $\varepsilon_{B_\cC}\colon B_\cC\to \one$ corresponding to $\Id_\cC$ under Lemma \ref{lem:representability1},
\item antipode $S_{B_\cC}\colon B_\cC\to B_\cC$ corresponding to $\delta^*$ under Lemma \ref{lem:representability1}.
\end{itemize}
To describe these structures on the coend, we use the isomorphisms from Lemma \ref{lem:representability1} and \ref{lem:representabilityn}, under which 
$\delta_X\colon X\to X\otimes B_\cC$ corresponds to the component map $d_X\colon X^*\otimes X\to B_\cC$. 
\end{proof}
The Hopf algebra structure on $B_\cC$ displayed in Proposition \ref{prop:coend-Hopf} here matches that of \cite{DGGPR}*{Section~2.4}.

In the spirit of \cite{Majid95book}*{Section~9}, the Hopf algebra $B_\cC$ is obtained via comodule reconstruction from $\cC$. Here, we denote by $ \rComod{B_\cC}(\cC)$ the monoidal category of right $B_\cC$-comodules internal to $\wcC$ where the objects underlying the comodules are contained in the full subcategory $\cC\subseteq \wcC$. 
There is a fully faithful functor of monoidal categories 
$$\cC\to \rComod{B_\cC}(\cC)$$
which sends an object $X$ of $\cC$ to itself with canonical $\cC$-coaction given by the natural transformation
$$\delta_X\colon X\to X\otimes B_\cC$$
corresponding to the identity morphism $\Id_{B_\cC}$ under Lemma \ref{lem:representability1}.

\smallskip

The Hopf algebra structure on $B_\cC$ corresponds to that of Majid's covariantized Hopf algebra.
\begin{defn}[{\cite{Majid95book}*{Example~9.4.10}}]\label{defn:cov-alg}
Let $A$ be a dual quasitriangular $R$-Hopf algebra over the integral domain $R$ with dual R-matrix $\cR\colon A\otimes A \to R$. Define the \emph{covariantized Hopf algebra} $\un{A}$ to be the same coalgebra as $A$, and the same unit, but with \emph{braided product} and antipode given by 
\begin{align}
\un{\mu}(a\otimes b)=a \br b &=  a_{(2)} b_{(3)} \cR\left(a_{(3)} \otimes S (b_{(1)})\right) \cR\left(a_{(1)} \otimes b_{(2)}\right)\label{eq:brproduct}
\\
\un{S}(a)&=S(a_{(2)})\cR(S^2(a_{(3)})S(a_{(1)})\otimes a_{(4)}).\label{eq:brantipode}
\end{align}
\end{defn}

Note that in these formula, we denote elements of $\un{A}$ by the same symbols as the corresponding elements of $A$. We will later require the following results.

\begin{lem}
The product of $A$ is recovered from the braided product $\un{\mu}$ of $\un{A}$ by the formula  
\begin{align}\label{eq:brproduct2}
\begin{split}
    a b &= \cR\left(S (a_{(1)}), b_{(1)}\right)  \cR \left( a_{(3)}, b_{(2)}\right) a_{(2)} \br b_{(3)}\\
    &= \cR^{-1}\left(a_{(1)}, b_{(1)}\right)  \cR \left( a_{(3)}, b_{(2)}\right) a_{(2)} \br b_{(3)}.
\end{split}
\end{align}
\end{lem}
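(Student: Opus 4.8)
The plan is to read the asserted identity as saying that $\mu$ is recovered from $\un{\mu}$ by inverting the twist that produced $\un{\mu}$ in the first place. Unwinding \eqref{eq:brproduct}, the braided product factors as $\un{\mu}=\mu\circ\Phi$, where $\Phi\colon A\otimes A\to A\otimes A$ is the $R$-linear map
\[
\Phi(a\otimes b)=\cR\!\left(a_{(1)}\otimes b_{(2)}\right)\,\cR\!\left(a_{(3)}\otimes S(b_{(1)})\right)\,a_{(2)}\otimes b_{(3)},
\]
and the first displayed formula of the lemma is exactly the statement $\mu=\un{\mu}\circ\Phi'$ for
\[
\Phi'(a\otimes b)=\cR\!\left(S(a_{(1)})\otimes b_{(1)}\right)\,\cR\!\left(a_{(3)}\otimes b_{(2)}\right)\,a_{(2)}\otimes b_{(3)}.
\]
Since $\un{\mu}=\mu\circ\Phi$, it suffices to prove $\Phi\circ\Phi'=\Id_{A\otimes A}$: then $ab=\mu(a\otimes b)=\mu\bigl(\Phi\Phi'(a\otimes b)\bigr)=\un{\mu}\bigl(\Phi'(a\otimes b)\bigr)$. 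The second displayed formula is then immediate from the first via the standard antipode identity $\cR(S(a)\otimes b)=\cR^{-1}(a\otimes b)$, which replaces the leading factor $\cR(S(a_{(1)})\otimes b_{(1)})$ by $\cR^{-1}(a_{(1)}\otimes b_{(1)})$.

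The computation uses only the defining properties of the dual R-matrix: the two hexagon relations $\cR(ab\otimes c)=\cR(a\otimes c_{(1)})\cR(b\otimes c_{(2)})$ and $\cR(a\otimes bc)=\cR(a_{(1)}\otimes c)\cR(a_{(2)}\otimes b)$, the normalisation $\cR(a\otimes 1)=\varepsilon(a)=\cR(1\otimes a)$, and the antipode axioms $S(a_{(1)})a_{(2)}=\varepsilon(a)=a_{(1)}S(a_{(2)})$. (The identity $\cR(S(a)\otimes b)=\cR^{-1}(a\otimes b)$ invoked above is itself a one-line consequence of the first hexagon relation together with $S(a_{(1)})a_{(2)}=\varepsilon(a)$.)

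Now I would expand $\Phi\bigl(\Phi'(a\otimes b)\bigr)$ by iterating the coproduct so that each of $a$ and $b$ carries five Sweedler legs; collecting the four scalar factors this becomes
\[
\cR\!\left(S(a_{(1)})\otimes b_{(1)}\right)\cR\!\left(a_{(2)}\otimes b_{(4)}\right)\cR\!\left(a_{(4)}\otimes S(b_{(3)})\right)\cR\!\left(a_{(5)}\otimes b_{(2)}\right)\,a_{(3)}\otimes b_{(5)}.
\]
Two symmetric cancellations finish the argument. Reordering the (commuting) scalars and applying the second hexagon relation to $\cR(a_{(4)}\otimes S(b_{(3)}))\,\cR(a_{(5)}\otimes b_{(2)})$ gives $\cR\bigl(a_{(4)}a_{(5)}\otimes b_{(2)}S(b_{(3)})\bigr)$, which by the antipode axiom $b_{(2)}S(b_{(3)})=\varepsilon(b_{(2,3)})$ and the normalisation of $\cR$ collapses the four legs $a_{(4)},a_{(5)},b_{(2)},b_{(3)}$ to counits. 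The two remaining factors are then $\cR(S(a_{(1)})\otimes b_{(1)})\,\cR(a_{(2)}\otimes b_{(4)})$ in which $a_{(1)},a_{(2)}$ and $b_{(1)},b_{(4)}$ are now adjacent; the first hexagon relation turns this into $\cR\bigl(S(a_{(1)})a_{(2)}\otimes b'\bigr)$ for the reconstituted leg $b'$, which collapses by $S(a_{(1)})a_{(2)}=\varepsilon(a)$. What survives is $a_{(3)}\otimes b_{(5)}=a\otimes b$, as required.

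The main obstacle is purely organisational: the Sweedler bookkeeping of five legs on each side, and the fact that a naive leg-by-leg cancellation fails because the $b$-legs paired against consecutive $a$-legs are interleaved rather than adjacent. The device that makes everything close up is to use the commutativity of the scalar factors to reorder them and then apply each hexagon relation in the direction that manufactures a product $b_{(2)}S(b_{(3)})$ (respectively $S(a_{(1)})a_{(2)}$) to which an antipode axiom applies. One must also take care to place the antipode on the correct tensor leg, i.e. to use $\cR(S(a)\otimes b)=\cR^{-1}(a\otimes b)$ with $S$ on the first slot (matching the second displayed line of the lemma) rather than $S^{-1}$.
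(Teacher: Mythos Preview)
Your argument is correct and carries out explicitly the ``easy verification'' that the paper defers to \cite{KS}*{Section~10.3.1} and \cite{Majid95book}*{Lemma~2.2.2}. Organising the computation as the identity $\Phi\circ\Phi'=\Id_{A\otimes A}$ is a clean way to package it; the paper does not spell out any details.

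One small notational slip: when you combine $\cR(a_{(4)}\otimes S(b_{(3)}))\,\cR(a_{(5)}\otimes b_{(2)})$ via the second hexagon relation, the result is $\cR\bigl(a'\otimes b_{(2)}S(b_{(3)})\bigr)$ where $a'$ is the \emph{recombined} Sweedler leg (i.e.\ the fourth leg in a four-fold coproduct), not the algebra product $a_{(4)}a_{(5)}$. This does not affect the conclusion, since $b_{(2)}S(b_{(3)})=\varepsilon(b_{(2)})1$ and the normalisation $\cR(-\otimes 1)=\varepsilon$ kills the term regardless of what sits in the first slot; but the formula as written conflates multiplication with comultiplication. The same remark applies to the second collapse.
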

\begin{proof}
This is easily verified, see e.g.~\cite{KS}*{Section 10.3.1, (86)}, and using \cite{Majid95book}*{Lemma~2.2.2} for the second equality.
\end{proof}

\begin{lem}
If $g\in A$ is grouplike, then $g$ is invertible in $\un{A}$.
\end{lem}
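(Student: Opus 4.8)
The plan is to reduce the braided product with a grouplike element to the ordinary product of $A$, and then to read off the inverse. Recall that a grouplike $g\in A$ satisfies $\Delta(g)=g\otimes g$ and $\varepsilon(g)=1$, so that in every iterated coproduct all Sweedler components equal $g$; moreover $g$ is invertible in $A$ with $g^{-1}=S(g)$, and $S(g)$ is again grouplike. Note also that the unit of $\un A$ is the same as that of $A$.

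First I would record that $\chi:=\cR(g\otimes-)\colon A\to R$ is a character. Indeed, the coquasitriangularity axiom $\cR(a\otimes bc)=\cR(a_{(1)}\otimes c)\cR(a_{(2)}\otimes b)$ (as in \cite{Majid95book}) together with $\Delta(g)=g\otimes g$ and commutativity of $R$ gives $\chi(bc)=\cR(g\otimes c)\,\cR(g\otimes b)=\chi(b)\chi(c)$, while $\chi(1)=\cR(g\otimes 1)=\varepsilon(g)=1$.

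Next I would substitute $a=g$ into the defining formula \eqref{eq:brproduct}. Using $g_{(i)}=g$ throughout, the scalar factors pull out and
\begin{align*}
g\br b &= \sum g\,b_{(3)}\,\cR\!\left(g\otimes S(b_{(1)})\right)\cR\!\left(g\otimes b_{(2)}\right)
= g\cdot\Big(\sum \chi\!\left(S(b_{(1)})\right)\chi(b_{(2)})\,b_{(3)}\Big)\\
&= g\cdot\Big(\sum \chi\!\left(S(b_{(1)})\,b_{(2)}\right)b_{(3)}\Big)
= g\cdot\chi(1)\,b = g\,b,
\end{align*}
where the third equality uses that $\chi$ is a character and the fourth uses the counit identity $\sum S(b_{(1)})\,b_{(2)}\otimes b_{(3)}=1\otimes b$. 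Thus left $\br$-multiplication by a grouplike element coincides with ordinary left multiplication in $A$. Applying this identity with $g$ and then with the grouplike element $S(g)$ yields $g\br S(g)=g\,S(g)=\varepsilon(g)1=1$ and $S(g)\br g=S(g)\,g=1$, so $S(g)$ is a two-sided $\br$-inverse of $g$ in $\un A$.

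The only points requiring care are the Sweedler-index bookkeeping and the precise form of the coquasitriangular axioms; the main conceptual step is recognizing that $\cR(g\otimes-)$ is a character, which collapses the two $\cR$-factors in \eqref{eq:brproduct} via the antipode--counit relation. Alternatively, since the coalgebra of $\un A$ equals that of $A$, the element $g$ is grouplike in the braided Hopf algebra $\un A$, so the antipode axiom directly gives $\un S(g)\br g=g\br\un S(g)=1$; a short evaluation of \eqref{eq:brantipode} on $g$ (using $S^2(g)S(g)=1$) shows $\un S(g)=S(g)$, recovering the same inverse.
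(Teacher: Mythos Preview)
Your proof is correct and, at its core, follows the same route as the paper: both compute the braided product \eqref{eq:brproduct} directly after observing that all Sweedler components of a grouplike element coincide. The paper simply instructs the reader to ``compute $g\br g^{-1}$'', whereas you make the computation transparent by first isolating the key mechanism---that $\chi=\cR(g\otimes-)$ is a character---and deducing the stronger identity $g\br b=gb$ for all $b$. Your alternative argument via the braided antipode axiom in $\un A$ (using that $\un A$ shares its coalgebra with $A$, so $g$ remains grouplike there, and computing $\un S(g)=S(g)$ from \eqref{eq:brantipode}) is a genuinely different and slightly slicker route that the paper does not take; it has the advantage of avoiding any manipulation of the $\cR$-factors in \eqref{eq:brproduct} beyond the single evaluation $\cR(1\otimes g)=\varepsilon(g)$.
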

\begin{proof}
If $g$ is grouplike in $A$, then $\Delta(g)=g\otimes g$ and $S(g)=g^{-1}=S^{-1}(g)$. Now, computing $g\br \,g^{-1}$ using the definition of product in \Cref{eq:brproduct} shows that $g$ is invertible with respect to the braided product. 
\end{proof}

\begin{lem}\label{lem:cov-is-BC}
Let $\cC$ be a tensor subcategory of $\flRmod{H}{R}$, for $H$ a ribbon $R$-Hopf algebra as in Example~\ref{ex:filteredHopf}. Then $B_\cC$ is isomorphic as a Hopf algebra object in $\cC$ to the covariantized Hopf algebra $\un{A_\cC}$, for $A_\cC\leq H^\circ$ as defined in Lemmas \ref{lem:tensorgenerators}--\ref{lem:AC-Hopf}.
\end{lem}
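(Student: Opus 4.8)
The plan is to exhibit an explicit isomorphism between $B_\cC$ and $\un{A_\cC}$ that is the identity on underlying objects and on coalgebra structure, and then verify that the multiplication on $B_\cC$ computed via Proposition~\ref{prop:coend-Hopf} agrees with the braided product $\un{\mu}$ of Definition~\ref{defn:cov-alg} under the identification $A_\cC\cong \un{A_\cC}$ of underlying coalgebras. Since $B_\cC=\widehat{T}(A_\cC)$ by Definition~\ref{defn:BC}, and $A_\cC\cong H^\circ$ (or its relevant subalgebra) by Lemmas~\ref{lem:tensorgenerators}--\ref{lem:AC-Hopf}, both $B_\cC$ and $\un{A_\cC}$ have the same underlying object of $\wcC$, namely $A_\cC$ with its forgotten Hopf structure. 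The coalgebra structures already agree: Proposition~\ref{prop:coend-Hopf} gives $\Delta_{B_\cC}$ and $\varepsilon_{B_\cC}$ via the coevaluation, and these match the coproduct and counit of $A_\cC$ (hence of $\un{A_\cC}$, which is defined to share the coalgebra of $A_\cC$) under the coordinate-function description of Lemma~\ref{lem:AC-Hopf}. So the content of the lemma is entirely in the multiplication and antipode.

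The key step is to translate the categorical product formula
$$\mu_{B_\cC}(d_X\otimes d_Y)=d_{X\otimes Y}(\Psi_{X^*\otimes X,Y^*}\otimes \Id_Y)$$
from Proposition~\ref{prop:coend-Hopf} into the concrete language of coordinate functions $c_{f,v}^V$. First I would evaluate the component map $d_X$ against the model: under $A_\cC\cong H^\circ$, the map $d_X\colon X^*\otimes X\to B_\cC$ sends $f\otimes v$ to the coordinate function $c_{f,v}^X$, and the braiding $\Psi$ acts via the R-matrix $R=R^{(1)}\otimes R^{(2)}$ of $H$. Then I would plug the explicit dual R-matrix $\cR$ of $H^\circ$ from Equation~\eqref{eq:R-matrix-mat-coeff}, namely $\cR(c_{f,v}\otimes c_{g,w})=f(R^{(1)}\cdot v)g(R^{(2)}\cdot w)$, into the target formula \eqref{eq:brproduct} and compute $c_{f,v}^X\br c_{g,w}^Y$ directly. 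The goal is to show that after resolving the braiding $\Psi_{X^*\otimes X,Y^*}$ into four R-matrix factors (two from braiding $Y^*$ past $X^*$, two from braiding it past $X$, matching the two $\cR$-evaluations and the appearance of the antipode $S$ in \eqref{eq:brproduct}), the categorical product lands precisely on Majid's braided product expression $a_{(2)}b_{(3)}\cR(a_{(3)}\otimes S(b_{(1)}))\cR(a_{(1)}\otimes b_{(2)})$. The antipode comparison is analogous: the formula $S_{B_\cC}d_X=d_{X^*}(\phi_X\otimes \Id_{X^*})\Psi_{X^*,X}$ should unwind, using the Drinfeld isomorphism $\phi_X$ and the antipode formula \eqref{eq:antipode-matrix-coeff} together with the R-matrix, into \eqref{eq:brantipode}.

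I expect the main obstacle to be the careful bookkeeping in matching the \emph{single} braiding morphism $\Psi_{X^*\otimes X,Y^*}$ in the coend product against the \emph{two} separate $\cR$-factors appearing in the covariantized product \eqref{eq:brproduct}, together with the factor $S(b_{(1)})$ in one of them. Concretely, braiding the tensor factor $Y^*$ past the composite $X^*\otimes X$ produces $\Psi_{X,Y^*}\circ\Psi_{X^*,Y^*}$ (up to the monoidal coherence), and one of these two braidings involves a dual object, so the graphical/algebraic rewriting will introduce the antipode through the duality relating $\cR$ evaluated on $X^*$ to $\cR$ evaluated on $X$ composed with $S$. Getting the Sweedler indices, the placement of $S$, and the order of the two $\cR$-factors to line up exactly with \eqref{eq:brproduct}, rather than off by a use of $\cR^{-1}$ or a transposition, is the delicate calculation; I would verify it by computing both sides on the generating coordinate functions and using naturality of the braiding together with Equation~\eqref{eq:R-matrix-mat-coeff}. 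Once multiplication, unit, comultiplication, counit, and antipode are all matched, the identity map on $A_\cC$ is the desired isomorphism of Hopf algebras in $\cC$, completing the proof.
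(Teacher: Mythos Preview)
Your proposal is correct and follows essentially the same approach as the paper: identify $B_\cC$ and $\un{A_\cC}$ on the underlying object via the coordinate functions $d_V(f\otimes v)=c_{f,v}^V$, check the coalgebra structures agree trivially, and then verify the coend product and antipode formulas from Proposition~\ref{prop:coend-Hopf} against Majid's formulas \eqref{eq:brproduct}--\eqref{eq:brantipode} by an explicit R-matrix computation on test elements $h\in H$. The only minor difference is mechanical: rather than decomposing $\Psi_{X^*\otimes X,Y^*}$ into two braidings and tracking the antipode through the dual, the paper computes directly with the $H$-action, introduces the antipode via $(k\cdot f)(v)=f(S(k)\cdot v)$ on duals, and then invokes the R-matrix axiom $(\Delta\otimes\Id)(R)=R_{13}R_{23}$ together with $(S\otimes S)(R)=R$ to produce the two $\cR$-factors; this sidesteps the decomposition you describe but lands on the same identity.
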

\begin{proof}
Consider the maps $d_V\colon V^*\otimes V\to H^*$, $f\otimes v\mapsto c_{f,v}^V$, defined as in \eqref{eq:componentmapsH}, for $V$ a finitely generated projective $H$-module. Then $d_V$ is a morphism of $H$-modules with respect to the coadjoint action 
\begin{align}
(k\cdot f)(h)&=f(S(k_{(1)})hk_{(2)}), \qquad h,k\in H,
\end{align}
of $H$ on $H^*$. We now need to check that the Hopf algebra structure on $\un{H}^*$ satisfies the equations from Proposition \ref{prop:coend-Hopf} with respect to the maps $d_V$.

Clearly, $d_\one=u$. Further, 
$$\varepsilon_{H^*} d_V(f\otimes v)=i_V(f\otimes v)(1)=f(v)=\coev_V(f\otimes v).$$
Next, we check for the coproduct that
$$\big(\Delta_{H^*}d_V(f\otimes v)\big)(h\otimes k)=f((hk)\cdot v)=f(h\cdot (k\cdot v))$$
is equal to 
$$\left((d_V\otimes d_V)\left(\sum_\alpha f\otimes v_\alpha\otimes f^\alpha \otimes v\right)\right)(h\otimes k)=f(h\cdot v_\alpha)f^\alpha(k\cdot v)=f(h\cdot(k\cdot v)),$$
where $\coev_V=\sum_\alpha v_\alpha\otimes f^\alpha$ for dual bases $\Set{v_\alpha}$ of $V$, $\Set{f^\alpha}$ of $V^*$ as $R$-modules.

To verify that the product on $B_\cC$ corresponds to the braided product from \eqref{eq:brproduct}, we compute with the $R$-matrix $R=R^{(1)}\otimes R^{(2)}\in H\otimes H$, that
\begin{align*}
d_{V\otimes W}&(\Psi_{V^*\otimes V,W^*}\otimes \Id_W)(f\otimes v\otimes g\otimes w)(h)\\
&=d_{V\otimes W}(R^{(2)}\cdot g\otimes (R^{(1)})_{(1)}\cdot f \otimes (R^{(1)})_{(2)}\cdot v\otimes w)(h)\\
&=((R^{(1)})_{(1)}\cdot f)(h_{(1)}(R^{(1)})_{(2)}\cdot v) (R^{(2)}\cdot g)(h_{(2)}\cdot w)
\\
&=f(S((R^{(1)})_{(1)})h_{(1)}(R^{(1)})_{(2)}\cdot v) (R^{(2)}\cdot g)(h_{(2)}\cdot w)\\
&=f(S((R^{(1)})_{(1)})h_{(1)}(R^{(1)})_{(2)}\cdot v) g(S(R^{(2)})h_{(2)}\cdot w)\\
&=f(S(R_1^{(1)})h_{(1)}R_2^{(1)}\cdot v) g(S(R_2^{(2)})S(R_1^{(2)})h_{(2)}\cdot w)\\
&=f(R_1^{(1)}h_{(1)}R_2^{(1)}\cdot v) g(S(R_2^{(2)})R_1^{(2)}h_{(2)}\cdot w).
\end{align*}
Here, the  first and second equalities follow from the definitions. The third and fourth equalities apply the action of $H$ on the duals, via the antipode, cf. Equation \eqref{eq:Hstarbimodule}. The sixth equality applies the $R$-matrix axiom 
\begin{equation}
    \label{eq:R-matrix}
(\Delta\otimes \Id)(R)=(R^{(1)})_{(1)}\otimes (R^{(1)})_{(2)}\otimes R^{(2)}= R_1^{(1)}\otimes R_2^{(1)}\otimes R_1^{(2)}R_2^{(2)}=R_{13}R_{23},
\end{equation}
where $R_i=R_i^{(1)}\otimes R_i^{(2)}$, for $i=1,2$, indicates two copies of the $R$-matrix. Here, we also use that 
$$S(R_1^{(2)}R_2^{(2)})=S(R_2^{(2)})S(R_1^{(2)}),$$
since the antipode $S$ is an anti-algebra morphism. The last equality uses invariance of the $R$-matrix under application of $S\otimes S$, c.f. \cite{Majid95book}*{Lemma~2.1.2}.
The last expression in the above block of equalities equals, by definition of the maps $c^V, c^W$ in Equation \eqref{coordinate function},
\begin{align*}
c_{f,v}^V&(R_1^{(1)}h_{(1)}R_2^{(1)})c_{g,w}^W(S(R_2^{(2)})R_1^{(2)}h_{(2)})\\
&={c_{f,v}^V}_{(1)}(R_1^{(1)}){c_{f,v}^V}_{(2)}(h_{(1)}){c_{f,v}^V}_{(3)}(R_2^{(1)}){c_{g,w}^W}_{(1)}(S(R_2^{(2)})){c_{g,w}^W}_{(2)}(R_1^{(2)}){c_{g,w}^W}_{(3)}(h_{(2)})\\
&=R({c_{f,v}^V}_{(1)}\otimes {c_{g,w}^W}_{(2)})
R({c_{f,v}^V}_{(3)}\otimes S({c_{g,w}^W}_{(1)})){c_{f,v}^V}_{(2)}(h_{(1)})
{c_{g,w}^W}_{(3)}(h_{(2)})
\\
&=\un{\mu}(c_{f,v}^V\otimes c_{g,w}^W)(h)\\&=
\big(\un{\mu}(d_V\otimes d_W)(f\otimes v\otimes g\otimes w)\big)(h).
\end{align*}
Here, we used the conventions that for $f,g\in H^*$, the coproduct is given by 
$$(f_{(1)}\otimes f_{(2)})(h\otimes g)=f_{(1)}(h)f_{(2)}(g).$$

To check that the antipode is given by the formula in \eqref{eq:brantipode}, we compute
\begin{align*}
\big(d_{V^*}&(\phi_V\otimes \Id_{V^*})\Psi_{V^*,V}(f\otimes v)\big)(h)\\
&=\big(d_{V^*}(\phi_V\otimes \Id_{V^*})(R^{(2)}\cdot v\otimes R^{(1)}\cdot f)\big)(h)\\
&=\big(d_{V^*}(S^2(R_2^{(1)}) R_2^{(2)}R_1^{(2)}\cdot v\otimes R_1^{(1)}\cdot f)\big)(h)\\
&=(hR_1^{(1)}\cdot f)(S^2(R_2^{(1)}) R_2^{(2)}R_1^{(2)}\cdot v)\\
&=f(S(R_1^{(1)})S(h)S^2(R_2^{(1)}) R_2^{(2)}R_1^{(2)}\cdot v)\\
&=f(S({R^{(1)}}_{(2)})S(h)S^2({R^{(1)}}_{(1)}) R^{(2)}\cdot v).
\end{align*}
Here, the first equality uses the definition of the braiding and the second equality computes the Drinfeld isomorphism $\phi_V$ in terms of the action of the R-matrix. The third equality applies the definition of $d_{V^*}$, followed by the action of $H$ on the dual in the forth equality. The fifth equality applies the R-matrix axiom \eqref{eq:R-matrix}. 
We need to equate the above to the following. 
\begin{align*}
   \un{S}d_V(f\otimes v)(h)&=(\un{S}c_{f,v}^V)(h)\\
   &=R\big(S^2({c_{f,v}^V}_{(3)})S({c_{f,v}^V}_{(1)})\otimes {c_{f,v}^V}_{(4)}\big) S({c_{f,v}^V}_{(2)})(h)\\
   &= R\big(S^2(c_{f^\beta,v_\gamma}^V)S(c_{f,v_\alpha}^V)\otimes c_{f^\gamma,v}^V\big)S(c_{f^\alpha,v_\beta}^V)(h)\\
   &= R\big(c_{\tau f^\beta, \tau v_\gamma}^{V^{**}}c_{\tau v_\alpha, f}^{V^*}\otimes c_{f^\gamma,v}^V\big)c_{\tau v_\beta, f^\alpha}^{V^*}(h)\\
   &= R\big(c_{\tau v_\alpha\otimes \tau f^\beta, \tau v_\gamma\otimes f}^{V^{**}\otimes V^*}\otimes c_{f^\gamma,v}^V\big)c_{\tau v_\beta, f^\alpha}^{V^*}(h)\\
   &= (\tau f^\beta({R^{(1)}}_{(1)}\cdot \tau v_\gamma))(\tau v_\alpha({R^{(1)}}_{(2)}\cdot f)) f^\gamma(R^{(2)}\cdot v)c_{\tau v_\beta, f^\alpha}^V(h)\\
   &= f^\beta(S^2({R^{(1)}}_{(1)})\cdot  v_\gamma)f(S({R^{(1)}}_{(2)})\cdot v_\alpha) f^\gamma(R^{(2)}\cdot v)f^\alpha(S(h)\cdot v_\beta)\\
   &= f^\beta(S^2({R^{(1)}}_{(1)})R^{(2)}\cdot v)f(S({R^{(1)}}_{(2)})\cdot v_\alpha) f^\alpha(S(h)\cdot v_\beta)\\
   &= f(S({R^{(1)}}_{(2)})\cdot v_\alpha) f^\alpha(S(h)S^2({R^{(1)}}_{(1)})R^{(2)}\cdot v)\\
      &= f(S({R^{(1)}}_{(2)})S(h)S^2({R^{(1)}}_{(1)})R^{(2)}\cdot v).
\end{align*}
The second equality applies the definition of the braided antipode from Equation \eqref{eq:brantipode}. In the third equality, we apply the coproduct of $H^\circ$, see Equation \eqref{eq-product-cVs}, followed by several applications of the antipode of $H^\circ$ of Equation \eqref{eq:antipode-matrix-coeff}, with the pivotal structure $\tau$ of $\flmod{R}$. The fifth equation applies the product of $H^\circ$, followed by the dual R-matrix from Equation \eqref{eq:R-matrix-mat-coeff}, where the first tensor factor $R^{(1)}$ acts on the tensor product using the coproduct of $H$. Now, the seventh equality evaluates the evaluation pairings on dual spaces in terms of the original pairing $\ev_V\colon V^*\otimes V\to R$, $f\otimes v\mapsto f(v)$, making use of the equality $(h\cdot f)(v)=f(S(h)\cdot (v))$ for the dual action. The remaining equalities use the duality axiom
$v_\alpha f^\alpha(v)=v$ repeatedly. This completes the proof of the antipode formula and concludes the comparison of Hopf algebra structures. 
\end{proof}

We will later use the case of a Hopf algebra $H$ which is finitely generated projective over $R$, when $B_\cC$ is isomorphic to the dual as a coalgebra, with covariantized product (see also \cite{TV}*{Section~6.4}).

\begin{ex} Consider the finite rank case, i.e., that $H$ is a Hopf algebra in $\flmod{R}$ with dual Hopf algebra $H^*$ and $\cC=\flRmod{H}{R}$, then $B_\cC=\un{H}^*$ as a Hopf algebra in $\cC$.
\end{ex}

\begin{ex}\label{eq:BqMn-GL2-SL2}
As a first concrete key example, used later in this paper, we continue Examples~\ref{ex:gl2} and \ref{ex:sl2} and consider the algebras $B_q(GL_2):=\un{O_q(GL_2)}$ and $B_q(SL_2):=\un{O_q(SL_2)}$. As a first step, one computes the algebra $B_q(M_2):=\un{O_q(M_2)}$, based on the dual quasi-triangular bialgebra $O_q(M_2)$ from \cite{Majid95book}*{Example~4.2.5} which, even though not a Hopf algebra, can still be deformed to the algebra $B_q(M_2)$ since the dual R-matrix is convolution invertible.  The algebra $B_q(M_2)$ is generated by elements $a,b,c,d$ subject to the relations 
\begin{gather}
b\br a=q^2a\br b,\qquad  c\br a=q^2a\br c, \qquad a\br d=d\br a,\label{eq:BqM2-rel1} \\
c\br d-d\br c=\left(1-q^{-2}\right)c\br a, \quad d\br b-b\br d = \left(1-q^{-2}\right)a\br b,\label{eq:BqM2-rel2}
\\ c\br b-b\br c=\left(1-q^{2}\right)(d-a)\br a.\label{eq:BqM2-rel3}
\end{gather}
This algebra appears in, e.g.\ in \cite{Majid95book}*{Example~4.3.4}. The algebra $B_q(GL_2)$ is now given by localizing at the central element $a\br d-q^2c\br b$ of $B_q(M_2)$, and the algebra $B_q(SL_2)$ is the quotient $B_q(M_2)/\left(a\br d-q^2c\br b-1\right)$.
\end{ex}


\section{Quantum groups and quantum function algebras}
\label{sec:qgroup-defns}

\subsection{Definitions of quantum groups}
\label{sec:q-groups}

In this section, we define the quantum groups $U_q(\mathfrak{sl}_n)$ and $U_q(\mathfrak{gl}_n)$ together with their small analogues $u_\epsilon(\mathfrak{sl}_n)$ and $u_\epsilon(\mathfrak{gl}_n)$, for an odd root of unity $\epsilon\in \mC$, as well as the associated quantum function algebras (or FRT algebras) $O_q(SL_n)$ and $O_q(GL_n)$. Following \cite{Tak} we show that these are dually paired Hopf algebras, paying particular attention to integral forms for the quantum function algebras. 

To fix notation, we consider the rings and fields 
\begin{equation}\label{not:rings-fields}
\mI:=\mZ[q,q^{-1}],\qquad \Bbbk:=\mQ(q), \qquad \mK:=\mC(q),
\end{equation}
for a generic parameter $q$.
We assume that 
\begin{center}
$\epsilon$ is a primitive $\ell$-th root of unity, for $\ell$ odd,
\end{center}
and denote by $\nu$ the image of $q$ in ring of cyclotomic integers, i.e.
$$\mO=\mZ[\nu]=\mZ[q,q^{-1}]/(\Psi_\ell(q))\xrightarrow{\sim}\mZ[\epsilon]\subset \mC.$$
Our presentations follows \cite{Tak}*{Section~3.2}, see also \cite{KS}*{Section~6.1.2} or \cite{Zh}*{Section~2}.

\begin{defn}[$U_q(\mathfrak{gl}_n)$ and $U_q(\mathfrak{sl}_n)$]
$U_q(\mathfrak{gl}_n)$ is generated as a $\mK$-algebra (or, just as a $\Bbbk$-algebra) by $E_i,F_i, J_j^{\pm}$, for $i=1,\ldots, n-1$ and $j=1,\ldots, n$ such that 
\begin{gather*}
J_iJ_j=J_jJ_i, \qquad J_iJ_i^{-1}=J_i^{-1}J_i=1,\\
J_iE_jJ_i^{-1}=q^{\delta^i_j-\delta^i_{j+1}}E_j, \qquad 
J_iF_jJ_i^{-1}=q^{-\delta^i_j+\delta^i_{j+1}}F_j,\\
[E_i,F_r]=\delta^i_r\frac{J_iJ_{i+1}^{-1}-J_i^{-1}J_{i+1}}{q-q^{-1}}, \qquad [E_i,E_j]=[F_i,F_j]=0 \quad (|i-j|\geq 2),\\
E_i^2E_{i\pm 1}-\left(q + q^{-1}\right)E_iE_{i\pm 1}E_i+E_{i\pm 1}E_i^2=0,\\
F_i^2F_{i\pm 1}-\left(q + q^{-1}\right)F_iF_{i\pm 1}F_i+F_{i\pm 1}F_i^2=0.
\end{gather*}
It is a Hopf algebra with coproduct, counit, and antipode give by 
\begin{gather*}
\Delta(E_i)=E_i\otimes J_{i}J_{i+1}^{-1}+1\otimes E_i,\\
\Delta(F_i)=F_i\otimes 1+J_{i}^{-1}J_{i+1}\otimes F_i,\\
\Delta(J_i)=J_i\otimes J_i, \qquad \varepsilon(E_i)=\varepsilon(F_i)=0, \qquad \varepsilon(J_i)=1,\\
S(E_i)=-E_iJ_i^{-1}J_{i+1}, \qquad S(F_i)=-J_iJ_{i+1}^{-1}F_i,\qquad S(J_i)=J_i^{-1}.
\end{gather*}

Recall from \cite{KS}*{Section~6.1.2} that the Hopf subalgebra of $U_q(\mathfrak{gl}_n)$ generated by $E_i, F_i,K_i:=J_iJ_{i+1}^{-1}$, for $i=1,\ldots, n-1$, is isomorphic to $U_q(\mathfrak{sl}_n)$. A presentation can be derived easily from the above presentation. In particular, we have
\begin{gather*}
K_iE_iK_i^{-1}=q^2E_i,\qquad K_iF_iK_i^{-1}=q^{-2}F_i,\\
K_{i}E_{i+1}K_{i}^{-1}=q^{-1}E_{i+1}, \qquad K_{i}F_{i+1}K_{i}^{-1}=qF_{i+1},\\
K_{i}E_{j}K_{i}^{-1}=E_j, \qquad K_{i}F_{j}K_{i}^{-1}=F_j \quad (j\neq i,i+1),\\
[E_i,F_j]=\delta^i_j\frac{K_i-K_i^{-1}}{q-q^{-1}},\\
\Delta(E_i)=E_i\otimes K_i+1\otimes E_i, \qquad\Delta(F_i)=F_i\otimes 1+K_i^{-1}\otimes F_i.
\end{gather*}
\end{defn}

We define the following elements in $U_q(\mathfrak{gl}_n)$, or $U_q(\mathfrak{sl}_n)$,
\begin{gather}
\stirling{g}{k}_q:=\prod_{i=1}^k\frac{q^{i-k}g-q^{k-i}g^{-1}}{q^i-q^{-i}}, \qquad x^{(k)}=\frac{x^k}{[k]_q!},
\end{gather}
where $g=J_j, K_j$, $x=E_i, F_i$, $k\in \mZ_{\geq 0}$, and
\begin{align}
[k]_q!=[k]_q[k-1]_q\ldots [1]_q, \qquad [k]_q=\frac{q^k-q^{-k}}{q-q^{-1}}.
\end{align}
In particular, $[1]_q=1$, $
\stirling{g}{0}_q=1, \stirling{g}{1}_q=\frac{g-g^{-1}}{q-q^{-1}}$, and $x^{(1)}=x$.
\begin{defn}[Integral forms $U^\intf_q(\mathfrak{gl}_n)$ and $U^\intf_q(\mathfrak{sl}_n)$]
$U^\intf_q(\mathfrak{gl}_n)$ is defined to be the $\mI$-subalgebra of $U_q(\mathfrak{gl}_n)$ generated by $J_j^{\pm 1}$, $\stirling{J_j^{\pm}}{k}_q$, $E_i^{(k)}$, and $F_i^{(k)}$, for $1\leq i\leq n-1$, $1\leq j\leq n$, and $k\in \mZ_{\geq 0}$. Similarly, one defines $U^\intf_q(\mathfrak{sl}_n)$ as the $\mI$-subalgebra of $U^\intf_q(\mathfrak{gl}_n)$ generated by $K_j^{\pm 1}$, $\stirling{K_j^{\pm}}{k}_q$, $E_i^{(k)}$, and $F_i^{(k)}$. In this algebra, we have the following the relations:
\begin{equation}E_i^k=E_i^{(k)}[k]_q!, \qquad F_i^k=F_i^{(k)}[k]_q!
\end{equation}
\end{defn}
$U^\intf_q(\mathfrak{gl}_n)$ and $U^\intf_q(\mathfrak{sl}_n)$ are Hopf algebras over $\mI$ and it is possible to give a full presentation of $U^\intf_q(\mathfrak{gl}_n)$ by generators and relations following \cite{Lus2}*{Theorem~4.5}. 
There is an isomorphism of $\Bbbk$-Hopf algebras between $U_q^\intf(\mathfrak{gl}_n)\otimes_{\mI}\Bbbk$ and $U_q(\mathfrak{gl}_n)$.

We define small quantum groups following \cite{Lus2}*{Section~0.4}.
\begin{defn}[Integral forms of small quantum groups $u_\nu^\intf(\mathfrak{gl}_n)$, $u_\nu^\intf(\mathfrak{sl}_n)$]
Define $u_\nu^\intf(\mathfrak{gl}_n)$ to be the $\mO$-subalgebra of the quotient $$U_q^\intf(\mathfrak{gl}_n)\otimes_\mI \mO/\inner{J^\ell_j-1\mid j=1,\ldots, n}$$ generated by $E_i$, $F_i$, and $J_j$, $1\leq i\leq n-1$, $1\leq j\leq n$. It follows that $u_\nu^\intf(\mathfrak{gl}_n)$ is a Hopf algebra. The Hopf algebra $u_\nu^\intf(\mathfrak{sl}_n)$ is defined as the $\mO$-subalgebra of $u_\nu^\intf(\mathfrak{gl}_n)$ generated by $E_i$, $F_i$, and $K_i$ of the same quotient. 
\end{defn}

\begin{defn}[Quantum groups at odd roots of unity $\epsilon\in \mC$]\label{def:U-at-epsilon}
We define 
\begin{gather*}
U_\epsilon(\mathfrak{gl}_n):=U_q^\intf(\mathfrak{gl}_n)\otimes_\mI \mC,\qquad U_\epsilon(\mathfrak{sl}_n):=U_q^\intf(\mathfrak{sl}_n)\otimes_\mI \mC,\\
u_\epsilon(\mathfrak{gl}_n):=u_\nu^\intf(\mathfrak{gl}_n)\otimes_\mO \mC,\qquad u_\epsilon(\mathfrak{sl}_n):=u_\nu^\intf(\mathfrak{sl}_n)\otimes_\mO \mC,
\end{gather*}
using the ring homomorphism $\mI\twoheadrightarrow \mO, q\mapsto \epsilon,$ and $\mO\hookrightarrow  \mC, \nu\mapsto \epsilon$.
\end{defn}

We will subsequently denote the generators of the small quantum groups corresponding to $E_i,F_i,J_j,K_j$ by lower case symbols $e_i, f_i, j_j, k_j$, respectively. The following result \cite{Lus2}*{Section 5.7} can be used to derive a presentation for $u_\epsilon(\mathfrak{gl}_n)$ or $u_\epsilon(\mathfrak{sl}_n)$.

\begin{lem}\label{lem:small-u-as-quo}
Mapping $E_i\mapsto e_i$, $F_i\mapsto f_i$, $J_j\mapsto j_j$ (or, $K_i\mapsto k_i$)  defines a surjective homomorphism of Hopf algebras $\pi\colon U_\epsilon(\mathfrak{gl}_n)\twoheadrightarrow u_\epsilon(\mathfrak{gl}_n)$ (respectively, $\pi\colon U_\epsilon(\mathfrak{sl}_n)\twoheadrightarrow u_\epsilon(\mathfrak{sl}_n)$). 

The kernel of $\pi$ is given by the ideal generated by 
$E_\alpha^\ell, F_\alpha^\ell, J_j^\ell-1$, for any positive root $\alpha$, $j=1,\ldots, n$ in the $\mathfrak{gl}_n$-case (respectively, with the generating relation $K_j^\ell-1$ instead of $J_j^\ell-1$ in the $\mathfrak{sl}_n$-case).
\end{lem}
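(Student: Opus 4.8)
The plan is to deduce the statement from Lusztig's structural results on the small quantum group \cite{Lus2}*{Section~5.7}, organised around a base-change argument along $\mI\twoheadrightarrow\mO\hookrightarrow\mC$. Since $U_\epsilon(\mathfrak{gl}_n)=U_q^\intf(\mathfrak{gl}_n)\otimes_\mI\mC$ and $u_\epsilon(\mathfrak{gl}_n)=u_\nu^\intf(\mathfrak{gl}_n)\otimes_\mO\mC$, and $\mC$ is flat over the cyclotomic ring $\mO$, it suffices to construct the homomorphism and identify its kernel over $\mO$ and then apply $-\otimes_\mO\mC$. Because the relevant integral forms are free as $\mO$-modules (they carry Lusztig PBW bases), right-exactness of the base change preserves both surjectivity and the kernel, so no information is lost in passing to $\mC$.

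First I would check that $\pi$ is a well-defined surjective morphism of Hopf algebras. Surjectivity is immediate, as $u_\nu^\intf(\mathfrak{gl}_n)$ is by definition generated by the images of $E_i,F_i,J_j$. Well-definedness is essentially built into the construction: $e_i,f_i,j_j$ are the images of $E_i,F_i,J_j$ under the defining quotient $U_q^\intf\otimes_\mI\mO\twoheadrightarrow U_q^\intf\otimes_\mI\mO/\inner{J_j^\ell-1}$, so they satisfy all relations of the corresponding generators of $U_\epsilon(\mathfrak{gl}_n)$; and since coproduct, counit and antipode are given on both sides by the same formulas on $E_i,F_i,J_j$, the map $\pi$ is automatically a bialgebra, hence Hopf algebra, morphism.

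The real content is the identification of $\ker\pi$. Write $I$ for the two-sided ideal generated by $E_\alpha^\ell,F_\alpha^\ell$ (for all positive roots $\alpha$) and $J_j^\ell-1$. The first key point is that $I$ is a \emph{Hopf} ideal. Using the quantum binomial theorem together with the vanishing $\binom{\ell}{a}_{\epsilon^{2}}=0$ for $0<a<\ell$ (valid because $\ell$ is odd, so $\epsilon^2$ is again a primitive $\ell$-th root of unity), one computes that $E_i^\ell$ is skew-primitive, namely $\Delta(E_i^\ell)=E_i^\ell\otimes(J_iJ_{i+1}^{-1})^\ell+1\otimes E_i^\ell$, and similarly for $F_i^\ell$ and for the nonsimple root vectors via Lusztig's braid operators $T_w$; together with normality of these elements this gives $\Delta(I)\subseteq I\otimes U_\epsilon(\mathfrak{gl}_n)+U_\epsilon(\mathfrak{gl}_n)\otimes I$ and $S(I)\subseteq I$. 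Thus $\pi$ factors through the Hopf algebra $U_\epsilon(\mathfrak{gl}_n)/I$, and $I\subseteq\ker\pi$ is clear since every generator of $I$ maps to $0$ or $1$.

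The main obstacle is the reverse inclusion $\ker\pi\subseteq I$, i.e.\ that the listed relations already exhaust all of them. I would prove this by a PBW-basis count. Lusztig's triangular decomposition gives compatible bases of $U_\epsilon(\mathfrak{gl}_n)$ in ordered monomials in the root vectors and the torus, and of $u_\epsilon(\mathfrak{gl}_n)$ in which the root-vector exponents are restricted to $0\le k<\ell$ and the torus is $(\mZ/\ell)^n$. Reducing an arbitrary ordered monomial modulo $I$ using $E_\alpha^\ell\equiv0$, $F_\alpha^\ell\equiv0$ and $J_j^\ell\equiv1$ produces a spanning set of $U_\epsilon(\mathfrak{gl}_n)/I$ indexed by exactly the PBW set of $u_\epsilon(\mathfrak{gl}_n)$; since $\pi$ carries this spanning set onto an honest basis of $u_\epsilon(\mathfrak{gl}_n)$, the spanning set must itself be a basis, forcing $U_\epsilon(\mathfrak{gl}_n)/I\xrightarrow{\sim}u_\epsilon(\mathfrak{gl}_n)$ and hence $\ker\pi=I$. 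The delicate step is controlling the straightening of the nonsimple root vectors $E_\alpha$ modulo $I$, where one genuinely relies on the integral-form computations of \cite{Lus2}; performing this over $\mO$ and then base-changing to $\mC$ completes the argument. The $\mathfrak{sl}_n$-case is identical after replacing $J_j^\ell-1$ by $K_j^\ell-1$ and restricting to the Hopf subalgebra generated by $E_i,F_i,K_i$.
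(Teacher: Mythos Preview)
The paper does not give a proof of this lemma; it merely records it as a consequence of \cite{Lus2}*{Section~5.7}. Your proposal follows the same route, citing Lusztig and sketching the underlying PBW-basis argument, so it is consistent with (and more detailed than) what the paper provides.
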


Finally, we observe that the standard module for $U_q(\mathfrak{gl}_n)$ and $U_q(\mathfrak{sl}_n)$ can be defined for the integral forms of these quantum groups, cf.\ \cite{Tak}*{Section~3.3}.

\begin{lem}\label{lem:V1-integral}
$U^\intf_q(\mathfrak{gl}_n)$ admits an irreducible rank $n$ module $V_1$ with free $\mI$-basis $v_1,\ldots, v_n$ and action given by 
\begin{align*}
J_i\cdot v_k&=q^{\delta^i_k}v_k, & E_j\cdot v_{k}&=\delta^j_k v_{k-1}, &  F_j\cdot v_k&=\delta^j_k v_{k+1},\\
\stirling{J_i}{l}_q\cdot v_k&=\delta^1_l\delta^i_k v_k, & E_j^{(l)}\cdot v_{k+1}&=\delta^1_l \delta^{j+1}_k v_k, & F_j^{(l)}\cdot v_{k}&=\delta^1_l\delta^j_k v_{k+1}.
\end{align*}
This module is also irreducible when restricted to a $U^\intf_q(\mathfrak{sl}_n)$-module.
\end{lem}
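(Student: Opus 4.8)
The plan is to verify directly that the proposed module structure on $V_1$ is well-defined over the integral form $U_q^\intf(\mathfrak{gl}_n)$, and then check irreducibility. The first task is to confirm that the given action of the generators $J_i^{\pm 1}$, $\stirling{J_i}{l}_q$, $E_j^{(l)}$, $F_j^{(l)}$ respects all the defining relations of $U_q^\intf(\mathfrak{gl}_n)$. Since these relations are inherited from $U_q(\mathfrak{gl}_n)$ together with the relations involving the divided powers, I would proceed by evaluating both sides of each relation on an arbitrary basis vector $v_k$ and comparing. Most of these are routine: the $J_i$ act diagonally by powers of $q$, so the relations $J_iE_jJ_i^{-1}=q^{\delta^i_j-\delta^i_{j+1}}E_j$ follow from a direct computation on $v_k$, and the Serre relations hold because on the standard module $E_j$ and $E_{j\pm 1}$ act by shifting the index by one, so any composite $E_j E_{j\pm 1} E_j$ applied to $v_k$ lands in the zero subspace after at most two shifts.

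The key relation to check carefully is the commutator $[E_i,F_r]=\delta^i_r\frac{J_iJ_{i+1}^{-1}-J_i^{-1}J_{i+1}}{q-q^{-1}}$. **The main obstacle** is making sure the scalar on the right evaluates correctly against the prescribed eigenvalues of $J_i$ on $v_k$. Applying the left side to $v_k$, one computes $E_iF_i\cdot v_k - F_iE_i\cdot v_k$ and finds it is supported on $v_k$ with coefficient depending on whether $k=i$ or $k=i+1$; meanwhile the right side acts as $\frac{q^{\delta^i_k-\delta^{i+1}_k}-q^{-\delta^i_k+\delta^{i+1}_k}}{q-q^{-1}}$, and one must confirm these two expressions agree on each $v_k$. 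This reduces to checking the cases $k=i$, $k=i+1$, and $k\notin\{i,i+1\}$ separately; in the generic-parameter setting this is standard, and the point here is simply that the divided-power generators act by integer-entry matrices so the formulas make sense over $\mI=\mZ[q,q^{-1}]$ without denominators. I would also verify the compatibility of the divided powers, e.g.\ that $E_j^{(l)}$ acts as zero for $l\geq 2$ (consistent with $E_j^2\cdot v_k=0$) and that $E_j^{(1)}=E_j$, which is immediate from the displayed formulas.

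For irreducibility, the plan is to show that any nonzero $U_q^\intf(\mathfrak{gl}_n)$-submodule $W$ of $V_1$ equals $V_1$. Taking a nonzero element $w=\sum_k c_k v_k\in W$, I would argue that by applying suitable $E_j$'s and $F_j$'s one can isolate a single basis vector. Concretely, because the $F_j$ lower the index and the $E_j$ raise it with the standard module's chain structure $v_n \xleftarrow{F} \cdots \xleftarrow{F} v_1$, repeated application of the $F_j$ to $w$ projects onto the bottom vector $v_n$ (up to a unit scalar), so $v_n\in W$; then applying the $E_j$ successively recovers every $v_k$, giving $W=V_1$. This works over $\mI$ because the structure constants are units or zero, so no division is required. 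Finally, irreducibility as a $U_q^\intf(\mathfrak{sl}_n)$-module follows from the same argument, since the chain-raising and chain-lowering operators $E_i, F_i$ already lie in the $\mathfrak{sl}_n$-subalgebra generated by $E_i,F_i,K_i$ (as recorded in the definition of $U_q^\intf(\mathfrak{sl}_n)$), and the $K_i=J_iJ_{i+1}^{-1}$ still act diagonally with distinct eigenvalues sufficient to separate the weight spaces.
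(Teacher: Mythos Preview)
Your verification of the module structure is correct and more detailed than the paper's, which simply asserts that one checks $V_1$ is closed under the generators. Your emphasis that the divided powers act by integer-entry matrices, so everything lives in $\mI$, is the right point.

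For irreducibility, however, there is a genuine gap. You claim that applying the $F_j$'s to a nonzero $w=\sum_k c_k v_k\in W$ yields $v_n$ ``up to a unit scalar'', but this is not justified: if $k_0$ is the smallest index with $c_{k_0}\neq 0$, your procedure produces $c_{k_0}v_n$, and $c_{k_0}\in\mI=\mZ[q,q^{-1}]$ need not be a unit. Indeed, for any non-unit $a\in\mI$ the subset $aV_1$ is a proper nonzero $U_q^\intf(\mathfrak{gl}_n)$-submodule, so $V_1$ is not simple in the strict sense. The statement should be read as: $V_1$ has no proper \emph{pure} submodule (equivalently, $V_1\otimes_\mI\Bbbk$ is irreducible). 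Your chain argument proves exactly this once one base-changes to the fraction field, but the sentence ``this works over $\mI$ because the structure constants are units or zero'' misidentifies where the difficulty lies---the issue is the coefficients of $w$, not the structure constants.

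The paper's proof takes a slightly different tack: it observes that the $J_i$-weights on $v_1,\dots,v_n$ are pairwise distinct, so each weight space is the rank-one free summand $\mI v_k$, and then invokes that $V_1$ is a highest weight module generated by $v_1$. This phrasing makes the freeness over $\mI$ transparent and reduces irreducibility (in the appropriate sense) to the same cyclic-generation argument you sketch.
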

\begin{proof}
One checks that $V_1$ is closed under action by the generators of $U^\intf_q(\mathfrak{gl}_n)$. The Cartan part, generated by the $J_i$ and $\stirling{J_i}{k}_q$ acts by scalars in $\mI$ and $V_1$ decomposes as a direct sum of distinct simple modules with respect to this action. This shows that the $\mI$-action is free with the stated basis. Moreover, the module $V_1$ is a highest weight module with highest weight vector $v_1$. This can be used to show that $V_1$ is irreducible as a $U_q^\intf(\mathfrak{sl}_n)$-module and, hence, as a $U_q^\intf(\mathfrak{sl}_n)$-module.  
\end{proof}

By extension of scalars, one recovers a $n$-dimensional simple $U_q(\mathfrak{gl}_n)$-module and $U_q(\mathfrak{sl}_n)$-module, denoted by $V_1^\mK=V_1\otimes_\mI\mK$.

\begin{lem}
Extension of scalars $V_1^\mO=V_1\otimes_\mI\mO$ along the map of rings $\mI\twoheadrightarrow \mO, q\mapsto \nu$ produces an irreducible $u_\nu^\intf(\mathfrak{gl}_n)$ module which is also irreducible when restricted to an $u_\nu^\intf(\mathfrak{sl}_n)$-module.
\end{lem}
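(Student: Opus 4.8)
The plan is to run the argument of Lemma~\ref{lem:V1-integral} again, but now transported along the base change $\mI\twoheadrightarrow\mO$, $q\mapsto\nu$, and then restricted to the subalgebra $u_\nu^\intf(\mathfrak{gl}_n)$ of the quotient $U_q^\intf(\mathfrak{gl}_n)\otimes_\mI\mO/\langle J_j^\ell-1\rangle$.

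First I would verify that $V_1^\mO$ really is a module for this quotient. By Lemma~\ref{lem:V1-integral} the space $V_1$ is a $U_q^\intf(\mathfrak{gl}_n)$-module, so extension of scalars makes $V_1^\mO=V_1\otimes_\mI\mO$ a module over $U_q^\intf(\mathfrak{gl}_n)\otimes_\mI\mO$, and every relation of that algebra holds automatically. It therefore only remains to see that the ideal $\langle J_j^\ell-1\rangle$ acts by zero, i.e.\ that each generator $J_j^\ell-1$ annihilates $V_1^\mO$; this is immediate from $J_j\cdot v_k=q^{\delta^j_k}v_k$, which specializes to $j_j\cdot v_k=\epsilon^{\delta^j_k}v_k$ and hence $j_j^\ell\cdot v_k=\epsilon^{\ell\delta^j_k}v_k=v_k$ because $\epsilon^\ell=1$. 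Restricting the resulting action of the quotient to the subalgebra $u_\nu^\intf(\mathfrak{gl}_n)$ then endows $V_1^\mO$ with the desired module structure.

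For irreducibility I would argue exactly as in Lemma~\ref{lem:V1-integral}. The Cartan generators act diagonally in the basis $v_1,\dots,v_n$, with $j_j$ acting on $v_k$ by the scalar $\epsilon^{\delta^j_k}$; since $\ell>1$ forces $\epsilon\neq1$, the $n$ associated characters of the Cartan part are pairwise distinct, so $V_1^\mO$ is free over $\mO$ on $v_1,\dots,v_n$ and is a highest weight module generated by $v_1$. The simple root vectors $e_i,f_i$ connect consecutive basis vectors, so that any nonzero weight vector can be carried to $v_1$ by raising operators and $v_1$ in turn generates all of $V_1^\mO$ by lowering operators. The point that needs care is that $\mO$ is not a field, so the differences of the distinct weight-scalars are not invertible and one cannot split an arbitrary submodule into its weight components directly over $\mO$. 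I would circumvent this precisely as is implicit in Lemma~\ref{lem:V1-integral}: perform the weight-separation over a field by base-changing along $\mO\hookrightarrow\mC$, where the distinct one-dimensional weight spaces force $V_1^\mO\otimes_\mO\mC$ to be a simple $u_\epsilon(\mathfrak{gl}_n)$-module, and then transfer this back to $V_1^\mO$ using its $\mO$-freeness (so that $V_1^\mO$ has no proper nonzero $\mO$-pure submodule, which is the relevant sense of irreducibility for these integral forms, as in Lemma~\ref{lem:V1-integral}).

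Finally, the restriction to $u_\nu^\intf(\mathfrak{sl}_n)$ follows the same template with the generators $k_i=j_ij_{i+1}^{-1}$ in place of the $j_j$. Here $k_i$ acts on $v_k$ by $\epsilon^{\delta^i_k-\delta^{i+1}_k}$, and these characters are pairwise distinct as soon as $\epsilon^2\neq1$, which holds because $\ell$ is odd and $\ell>2$; the same highest weight generation by $v_1$ and the same base-change-to-$\mC$ argument then give irreducibility over $u_\nu^\intf(\mathfrak{sl}_n)$. I expect the only genuine obstacle to be this non-field base ring: the separation of weight spaces that drives the irreducibility argument has to be carried out over $\mC$ (or the fraction field of $\mO$) and pulled back via freeness, rather than performed over $\mO$ itself.
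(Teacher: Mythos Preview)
The paper states this lemma without proof, treating it as an immediate consequence of Lemma~\ref{lem:V1-integral} under base change. Your proposal therefore supplies strictly more than the paper does, and the argument you sketch is sound: you correctly verify that the ideal $\langle J_j^\ell-1\rangle$ acts trivially (since $\nu^\ell=1$ in $\mO$), you identify the weight decomposition, and you are right to flag that irreducibility over the non-field $\mO$ must be understood in the same sense as in Lemma~\ref{lem:V1-integral} (e.g.\ no proper pure submodules, equivalently simplicity after passing to a field), since $I\cdot V_1^\mO$ is a proper submodule for any proper nonzero ideal $I\subset\mO$. Your base-change-to-$\mC$ workaround is the appropriate way to handle this.

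One small notational slip: in several places you write $\epsilon$ where $\nu$ is meant (we are over $\mO=\mZ[\nu]$, not yet over $\mC$); for instance $j_j\cdot v_k=\nu^{\delta^j_k}v_k$ rather than $\epsilon^{\delta^j_k}v_k$. This does not affect the mathematics. Your check that the $\mathfrak{sl}_n$ weight characters are distinct when $\nu^2\neq1$ is correct and is exactly the kind of verification the paper's terse treatment omits.
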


Again, extending scalars gives a simple $n$-dimensional module $V_1^\mC=V_1^\mO\otimes_{\mO}\mC$ of $u_\epsilon(\mathfrak{gl}_n)$ or $u_\epsilon(\mathfrak{sl}_n)$.

\subsection{Definitions of quantum function algebras}\label{sec:q-fun-alg}

We now introduce various versions of quantum function algebras of type $GL_n$ and $SL_n$ following \cite{Tak}.

\begin{defn}[The $q$-matrix algebra $O^\intf_q(M_n)$]\label{defn:OLqMn}
Define $O^\intf_q(M_n)$ to be the $\mI$-algebra generated by $x^{i}_{j}$ subject to relations 
\begin{gather}
x^i_kx^i_j=qx^i_jx^i_k,\qquad x^k_ix^j_i=qx^j_ix^k_i\quad (\forall 1\leq j<k\leq n), \label{eq:Mqn1}\\
\begin{split}
x^i_lx^j_k=x^j_kx^i_l, \quad x^j_lx^i_k-x^i_kx^j_l=\left(q-q^{-1}\right)x^i_lx^j_k\\ (\forall 1\leq i<j\leq n,\quad 1\leq k<l\leq n ).\end{split}
\label{eq:Mqn2}
\end{gather}
\end{defn}

The second type of relations \eqref{eq:Mqn2} can be expressed as 
$$bc=cb, \qquad da-ad=\left(q-q^{-1}\right)bc$$
for any $2\times 2$-minor matrix 
$
\left(
\begin{smallmatrix}
a&b\\
c&d
\end{smallmatrix}
\right)
$ 
of the matrix $\left(x^i_j\right)_{1\leq i,j\leq n}$.

\begin{ex}
If $n=2$, we recover the relations from \Cref{eq:AR-rels}.
\end{ex}

\begin{rmk}
The algebra $O^\intf_q(M_n)$ is often defined with relations summarized in a matrix equation. How to relate to such a presentation is expained, with the conventions used here, e.g., in \cite{Majid95book}*{Exercise~4.1.2, Examples~4.1.3 \& 4.2.5}.
\end{rmk}

For any total ordering of the generators $x^i_j$, the set
$$\left\{ \prod_{1\leq i\leq j\leq n} \left(x^i_j\right)^{n_{ij}}\;\middle|\; n_{ij}\in \mZ_{\geq 0} \right\}$$
is an $\mI$-basis for $O^\intf_q(M_n)$, see \cite{Tak}*{Section 3.1} and  \cite{PW}. Note that, unlike for the quantum groups, the $q$-matrix algebras do not require divided powers in their integral forms.

By extension of scalars, we define versions of the $q$-matrix algebras valued in other rings and fields from \eqref{not:rings-fields}.
\begin{gather}
O_q(M_n):=O^\intf_q(M_n)\otimes_{\mI}\Bbbk,\qquad O_\epsilon(M_n):=O^\intf_q(M_n)\otimes_\mI \mC
\end{gather}
as in Definition \ref{def:U-at-epsilon}, where the latter uses the root of unity $\epsilon$ of odd order $\ell$. Unlike the case of the quantized enveloping algebras in Section \ref{sec:q-groups}, the presentations of the integral form  directly give presentations for these quantum function algebras.

\begin{lem}
$O^\intf_q(M_n)$ is a bialgebra with coproduct and antipode given by
\begin{align}
\Delta\left(x^i_j\right)=\sum_{k=1}^n x^i_k\otimes x^k_j, \qquad \varepsilon\left(x^i_j\right)=\delta^i_j.\label{eq:Mn-coalgebra}
\end{align}
\end{lem}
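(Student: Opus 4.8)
The plan is to construct $\Delta$ and $\varepsilon$ as algebra homomorphisms and then verify the coalgebra axioms, reducing everything to identities on the generators. Since $O^\intf_q(M_n)$ is the quotient of the free $\mI$-algebra on the symbols $x^i_j$ by the two-sided ideal generated by the relations \eqref{eq:Mqn1}--\eqref{eq:Mqn2}, its universal property says that the assignments $\Delta(x^i_j)=\sum_{k=1}^n x^i_k\otimes x^k_j$ and $\varepsilon(x^i_j)=\delta^i_j$ extend to $\mI$-algebra maps $\Delta\colon O^\intf_q(M_n)\to O^\intf_q(M_n)\otimes_\mI O^\intf_q(M_n)$ and $\varepsilon\colon O^\intf_q(M_n)\to \mI$ precisely when the images of all defining relations vanish in the respective target algebras. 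So the first task is to verify this relation by relation.

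For $\varepsilon$ the check is immediate: substituting $x^i_j\mapsto \delta^i_j$ kills every off-diagonal generator and sends each diagonal one to $1$, and one reads off that both sides of each relation in \eqref{eq:Mqn1}--\eqref{eq:Mqn2} collapse to the same element of $\mI$. For $\Delta$ the computation is the one underlying the FRT construction. I would package it by writing $T=(x^i_j)$ as a matrix over $O^\intf_q(M_n)$ and recognizing \eqref{eq:Mqn1}--\eqref{eq:Mqn2} as the quantum-matrix relations $R\,T_1T_2=T_2T_1\,R$ for the type $A_{n-1}$ R-matrix acting on $V_1\otimes V_1$, where $V_1$ is the standard module of Lemma~\ref{lem:V1-integral} (the rank-$n$ analogue of the R-matrix in Example~\ref{ex:sl2}) and $T_1=T\otimes I$, $T_2=I\otimes T$. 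Because the two tensor factors of $O^\intf_q(M_n)\otimes_\mI O^\intf_q(M_n)$ commute with one another, the matrix with entries $\Delta(x^i_j)$ is the product of the two commuting copies of $T$ sitting in the two factors, and the standard argument---inserting $R$ and sliding it through using $R\,T_1T_2=T_2T_1\,R$ in each copy---shows that this new matrix again satisfies the same relations. Concretely, this amounts to applying $\Delta$ to each of \eqref{eq:Mqn1}--\eqref{eq:Mqn2}, expanding via the coproduct formula, and reducing both sides using the relations in each tensor factor. These are polynomial identities in $q,q^{-1}$ with no denominators, so they hold verbatim over the integral form; they are carried out in \cite{Tak}*{Section~3.1} and \cite{PW}.

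With $\Delta$ and $\varepsilon$ in hand as algebra homomorphisms, coassociativity and the counit axiom need only be verified on the generators $x^i_j$, since all maps involved are algebra homomorphisms and the $x^i_j$ generate $O^\intf_q(M_n)$. On generators they reduce to the matrix identities $(\Delta\otimes\Id)\Delta(x^i_j)=\sum_{k,l}x^i_k\otimes x^k_l\otimes x^l_j=(\Id\otimes\Delta)\Delta(x^i_j)$ and $\sum_k \varepsilon(x^i_k)\,x^k_j=x^i_j=\sum_k x^i_k\,\varepsilon(x^k_j)$, both of which are clear.

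The main obstacle is the single step of confirming that $\Delta$ respects \eqref{eq:Mqn1}--\eqref{eq:Mqn2}; the remaining verifications (for $\varepsilon$, and of the coalgebra axioms) are formal consequences of the matrix form of the coproduct. I would therefore either present the R-matrix bookkeeping once, or simply invoke \cite{Tak} and \cite{PW} for the quadratic relations, noting that passing to the integral form over $\mI$ changes nothing since no divided powers or denominators occur.
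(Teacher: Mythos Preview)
Your proposal is correct and follows the standard FRT/matrix-bialgebra argument. The paper does not supply a proof for this lemma at all; it is stated as a known fact, with the surrounding text pointing to \cite{Tak} and \cite{PW} for the PBW basis and related structural results. So your outline actually goes further than the paper, which simply records the bialgebra structure without verification.

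One small remark: the lemma as stated contains a typo (``coproduct and antipode'' should read ``coproduct and counit''), and your proof correctly treats it as such. Your suggestion at the end---to simply invoke \cite{Tak} and \cite{PW} since the relations are quadratic with integral coefficients---is exactly the spirit in which the paper handles this, so that terse option would already suffice.
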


Define the $q$-determinant by
\begin{equation}
\dq{n}:=\sum_{\sigma\in S_n}(-q)^{-l(\sigma)}x^1_{\sigma(1)}\ldots x^n_{\sigma(n)},\label{eq:qdet}
\end{equation}
where $$l(\sigma)=|\Set{(i,j)~|~ 1\leq i<j\leq n \text{ such that } \sigma(i)>\sigma(j)}|$$ denotes the number of \emph{inversions} of $\sigma$.  
As a special case, we recover $\dq{2}$ from \eqref{eq:qdet2}.
Results from \cites{PW,Tak} imply the following lemma.
\begin{lem}\label{lem:dqn}
The element $\dq{n}$ is not a zero divisor. It is grouplike and central in the bialgebra $O^\intf_q(M_n)$.
\end{lem}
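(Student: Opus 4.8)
The plan is to establish the three assertions about $\dq{n}$ in sequence, with the grouplike and centrality properties being essentially computational and the non-zero-divisor property leveraging the PBW-type basis stated after Definition~\ref{defn:OLqMn}. First I would prove that $\dq{n}$ is grouplike. Using the coproduct formula $\Delta(x^i_j)=\sum_k x^i_k\otimes x^k_j$ from \eqref{eq:Mn-coalgebra}, I would compute $\Delta(\dq{n})$ by expanding the sum over $\sigma\in S_n$ and collecting terms. The key identity here is the quantum Laplace/Cauchy expansion: applying $\Delta$ to \eqref{eq:qdet} produces terms indexed by functions rather than permutations, and one shows using the commutation relations \eqref{eq:Mqn1}--\eqref{eq:Mqn2} that the terms where a column index repeats cancel, leaving precisely $\dq{n}\otimes \dq{n}$. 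The counit computation $\varepsilon(\dq{n})=1$ follows immediately from $\varepsilon(x^i_j)=\delta^i_j$, since only the identity permutation contributes.

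Next I would establish centrality. The standard approach is to show that $\dq{n}$ commutes with each generator $x^a_b$, which suffices since the $x^a_b$ generate the algebra. I would use the quantum minor/cofactor expansion of $\dq{n}$ and the relations \eqref{eq:Mqn1}--\eqref{eq:Mqn2} directly, or alternatively invoke the results of \cite{PW} and \cite{Tak} as indicated in the statement. The computation reduces to verifying that moving $x^a_b$ past the ordered products $x^1_{\sigma(1)}\cdots x^n_{\sigma(n)}$ and resumming over $\sigma$ returns the same element; the $q$-weights arising from the commutation relations conspire to cancel precisely because $\dq{n}$ is the fully antisymmetrized (quantum-determinant) combination.

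Finally, for the non-zero-divisor claim, I would appeal to the explicit $\mI$-basis
$$\left\{ \prod_{1\leq i\leq j\leq n} \left(x^i_j\right)^{n_{ij}}\;\middle|\; n_{ij}\in \mZ_{\geq 0} \right\}$$
stated to exist for $O^\intf_q(M_n)$. Since $O^\intf_q(M_n)$ is a free $\mI$-module with this ordered-monomial basis and the quotient of a polynomial-type (Ore/iterated skew-polynomial) algebra, it is an integral domain: indeed $O^\intf_q(M_n)$ is an iterated skew polynomial ring over $\mI$, and such rings are domains. In a domain every nonzero element is a non-zero divisor, so it suffices to observe that $\dq{n}\neq 0$, which is clear since its leading term (say $x^1_1 x^2_2\cdots x^n_n$ under a suitable ordering) appears with coefficient $\pm 1$ in the basis expansion.

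The main obstacle is the grouplike computation: expanding $\Delta(\dq{n})$ honestly requires controlling the $q$-signs $(-q)^{-l(\sigma)}$ against the reorderings forced by \eqref{eq:Mqn1}--\eqref{eq:Mqn2} when column indices coincide, and verifying the cancellation of the ``degenerate'' terms is the combinatorially delicate part. In practice I would either cite the quantum Laplace expansion from \cite{PW} or \cite{Tak} to bypass this bookkeeping, since the lemma explicitly attributes the result to those sources; the centrality and non-zero-divisor properties then follow more directly from the algebra's skew-polynomial structure.
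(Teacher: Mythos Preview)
Your proposal is correct and aligns with the paper's treatment: the paper does not prove this lemma at all but simply attributes it to \cite{PW} and \cite{Tak}, and your sketch (grouplike via the quantum Laplace expansion, centrality on generators, non-zero-divisor via the iterated skew polynomial / PBW domain argument) is precisely what those references contain. Your final paragraph already anticipates this, so there is nothing to add.
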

The lemmas hold similarly for extensions of $O^\intf_q(M_n)$  to coefficients in any integral domain that is an $\mI$-algebra, e.g., the fields $\Bbbk$ and $\mK$, see \eqref{not:rings-fields}. 

\begin{defn}[Integral versions $O^\intf_q(GL_n)$, $O^\intf_q(SL_n)$]\label{defn:OqLGLn-SLn}
We define 
\begin{gather*}
O^\intf_q(GL_n):=O^\intf_q(M_n)[t]/(t\dq{n}-1),\\
O^\intf_q(SL_n):=O^\intf_q(M_n)/(\dq{n}-1).
\end{gather*}
Similarly, we define by extension of scalars
\begin{gather}\label{eq:OqGn}
O_q(G_n):=O^\intf_q(G_n)\otimes_{\mI}\Bbbk, \qquad O_\epsilon(G_n):=O^\intf_q(G_n)\otimes_\mI \mC,
\end{gather}
For $G_n=GL_n$ or $SL_n$, using the ring homomorphisms $\mI\twoheadrightarrow \Bbbk, q\mapsto \epsilon,$ and $\mI\hookrightarrow  \mC$. We will also regard $O_q(G_n)$ as a $\mK$-algebra without distinguishing notation.
\end{defn}

\begin{ex}
When $n=3$, a detailed presentation of the algebra $O_q(SL_n)$ is given in \cite{Majid95book}*{Example~4.2.9}.
\end{ex}

\begin{lem}[{\cite{PW}*{(5.3.2) Theorem}, \cite{Tak}}]\label{lem:OqGL-antipode}
The $\mI$-algebra $O_q^\intf(GL_n)$ is a Hopf algebra with sub-bialgebra $O^\intf_q(M_n)$.
\end{lem}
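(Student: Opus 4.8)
The plan is to build the Hopf structure in two stages: first extend the known bialgebra structure on $O^\intf_q(M_n)$ across the localization, then construct the antipode from quantum cofactors.

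\textbf{Extending the bialgebra structure.} By Lemma~\ref{lem:dqn} the element $\dq{n}$ is central, grouplike, and a non-zero-divisor, so $O^\intf_q(GL_n)=O^\intf_q(M_n)[t]/(t\dq{n}-1)$ is the localization of $O^\intf_q(M_n)$ inverting the central regular element $\dq{n}$, with $t=\dq{n}^{-1}$. Since localization at a central non-zero-divisor is injective, the canonical map $O^\intf_q(M_n)\to O^\intf_q(GL_n)$ is a monomorphism of algebras. To extend the coproduct, I would consider the algebra map $O^\intf_q(M_n)[t]\to O^\intf_q(GL_n)\otimes_\mI O^\intf_q(GL_n)$ that extends $\Delta$ on $O^\intf_q(M_n)$ and sends $t\mapsto t\otimes t$; this is well defined because $t\otimes t$ is central, and it kills the defining relation since grouplikeness of $\dq{n}$ gives
\[
(t\otimes t)\,\Delta(\dq{n})-1\otimes 1=(t\dq{n})\otimes(t\dq{n})-1\otimes 1=0 .
\]
Hence $\Delta$ descends to $O^\intf_q(GL_n)$, and likewise $\varepsilon$ extends by $\varepsilon(t)=1$. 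Coassociativity and the counit axioms hold because they hold on the algebra generators $x^i_j$ and $t$ and both maps are algebra homomorphisms. As $\Delta$ and $\varepsilon$ restrict to the original structure, this exhibits $O^\intf_q(M_n)$ as a sub-bialgebra.

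\textbf{Constructing the antipode.} The substantive part is the antipode, which I would define following \cite{PW} via the quantum adjugate. For each pair $(i,j)$ let $\widehat{x}^i_j$ be the corresponding entry of the quantum adjugate of $(x^r_s)$, a signed $(n-1)$-fold quantum minor with sign normalisation dictated by \eqref{eq:qdet}; these are polynomials in the $x^r_s$ with coefficients in $\mI$. The key input is the pair of quantum Laplace expansions
\[
\sum_{k=1}^n x^i_k\,\widehat{x}^k_j=\delta^i_j\,\dq{n}=\sum_{k=1}^n \widehat{x}^i_k\, x^k_j ,
\]
expressing that $(\widehat{x}^i_j)$ is a two-sided inverse of $(x^i_j)$ up to the factor $\dq{n}$. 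I would then set $S(x^i_j)=t\,\widehat{x}^i_j$ and $S(t)=\dq{n}$, and extend $S$ to an anti-algebra homomorphism of $O^\intf_q(GL_n)$.

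\textbf{Verification and main obstacle.} Two checks remain. First, $S$ must respect all defining relations \eqref{eq:Mqn1}, \eqref{eq:Mqn2} and $t\dq{n}-1$, which reduces to the commutation relations among the quantum minors $\widehat{x}^i_j$. Second, the antipode identities $m(S\otimes\Id)\Delta=u\varepsilon=m(\Id\otimes S)\Delta$ need only be verified on the generators $x^i_j$ and $t$: since $S$ is an anti-homomorphism, the set of elements satisfying each identity is a subalgebra, so checking generators suffices. On $x^i_j$ both identities become exactly the Laplace expansions above (using that $t$ is central), while on $t$ they are immediate from $S(t)=\dq{n}$ and $t\dq{n}=1$. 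The hard part is the purely combinatorial input — establishing the Laplace expansions together with the compatibility of the $\widehat{x}^i_j$ with the relations — which is the content of \cite{PW}*{(5.3.2)} and \cite{Tak}; crucially it works integrally over $\mI$ because no denominators beyond the single inverted $\dq{n}$ occur.
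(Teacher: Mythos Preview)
The paper does not supply a proof of this lemma; it is stated with a citation to \cite{PW}*{(5.3.2)} and \cite{Tak} and left at that. Your sketch is a correct outline of the standard argument found in those references: extend the bialgebra structure across the localization using that $\dq{n}$ is central and grouplike, then define the antipode via the quantum adjugate and verify the antipode axioms from the quantum Laplace expansions. There is nothing to compare against in the paper itself, and your proposal is consistent with the cited sources.
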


\begin{defn}[Small quantum function algebras, $GL_n$-type]\label{pres-oqgln}
We define $o_\nu^\intf(GL_n)$ to be the  quotient of $O^\intf_q(GL_n)\otimes_\mI \mO$ by the relations 
\begin{equation}\label{eq:small-o-rel}
\left(x^i_j\right)^\ell=0\quad (1\leq i\neq j\leq n),\quad \text{and}\quad  \left(x^i_i\right)^{\ell}=1.
\end{equation}
As this ideal is a Hopf ideal, $o_\nu^\intf(GL_n)$ is a Hopf algebra. 
Further, we define 
$$o_\epsilon(GL_n)=o_\nu^\intf(GL_n)\otimes_\mO\mC.$$
\end{defn}

\begin{lem}[{\cite{Tak}*{Section~5}}]\label{lem:Tak-OGLn-quotient}
$o^\intf_\nu(GL_n)$ is isomorphic to the quotient of $O^\intf_q(M_n)\otimes_\mI \mO$ by the ideal generated by 
\begin{align}
\left(x^i_j\right)^\ell\quad (1\leq i\neq j\leq n),\quad \text{and}\quad  \left(x^i_i\right)^{\ell}-1.\label{eq:rel-small-o}
\end{align}
In particular, $\dq{n}$ is invertible in this quotient.
An $\mO$-basis for $o_\nu(GL_n)$ is given by the set
$$\left\lbrace \prod_{1\leq i,j\leq n} \left(x^i_j\right)^{n_{ij}}\,\middle| \, 0\leq n_{ij}\leq \ell-1 \right\rbrace.$$
\end{lem}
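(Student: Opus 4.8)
I would first reduce the statement to a single rank computation. Write $A = O^\intf_q(M_n)\otimes_\mI\mO$ and let $I\subseteq A$ be the two-sided ideal generated by the elements in \eqref{eq:rel-small-o}. By the $\mI$-basis of $O^\intf_q(M_n)$ recalled after Definition \ref{defn:OLqMn}, every element of $A$ is an $\mO$-linear combination of ordered monomials $\prod (x^i_j)^{n_{ij}}$. In $A/I$ the relation $(x^i_j)^\ell=0$ (for $i\neq j$) annihilates any monomial having an off-diagonal exponent $\geq \ell$, while $(x^i_i)^\ell=1$ reduces each diagonal exponent modulo $\ell$ (the powers of a fixed $x^i_i$ are contiguous in an ordered monomial, so this is unambiguous). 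Hence the $\ell^{n^2}$ reduced monomials with all $0\leq n_{ij}\leq \ell-1$ span $A/I$ over $\mO$, and $A/I$ is a quotient of the free module $\mO^{\ell^{n^2}}$. The entire content of the lemma is therefore the \emph{linear independence} of these monomials, i.e.\ that $A/I$ has $\mO$-rank exactly $\ell^{n^2}$; spanning is immediate and is not the difficulty.

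Next I would record that $I$ is a bi-ideal of the bialgebra $A$. Indeed $\varepsilon$ vanishes on the generators of $I$, and using the coproduct \eqref{eq:Mn-coalgebra} together with the vanishing of the intermediate $\nu$-binomial coefficients at the primitive $\ell$-th root $\nu$ (so that $\bigl(\sum_k x^i_k\otimes x^k_j\bigr)^{\ell}=\sum_k (x^i_k)^\ell\otimes (x^k_j)^\ell$), one checks that $\Delta$ sends each generator of $I$ into $A\otimes I + I\otimes A$. Thus $A/I$ is a bialgebra and, by Lemma \ref{lem:dqn}, $\dq{n}$ is a central grouplike element of $A/I$. The localisation map $O^\intf_q(M_n)\to O^\intf_q(GL_n)$ induces an algebra homomorphism $\phi\colon A/I \to o^\intf_\nu(GL_n)$ which is the identity on the generators $x^i_j$.

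The essential input is the rank count, which I would obtain from Takeuchi's duality. The nondegenerate Hopf pairing $O^\intf_q(GL_n)\otimes U^\intf_q(\mathfrak{gl}_n)\to \mI$ of \cite{Tak} base-changes along $\mI\to\mO$ to a pairing of $\mO$-Hopf algebras. Under this pairing the defining ideal of the small quantum group, generated by $E_\alpha^\ell, F_\alpha^\ell, J_j^\ell-1$ as in Lemma \ref{lem:small-u-as-quo}, is annihilated exactly by the ideal $I$ of small relations \eqref{eq:rel-small-o}, and conversely; so the pairing descends to a perfect pairing between $o^\intf_\nu(GL_n)$ and $u^\intf_\nu(\mathfrak{gl}_n)$. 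Since $u^\intf_\nu(\mathfrak{gl}_n)$ is free over $\mO$ of rank $\ell^{n^2}$ (its PBW monomials in the $e_\alpha, f_\alpha, j_j$), perfection forces $o^\intf_\nu(GL_n)$ to be free of rank $\ell^{n^2}$ with the reduced monomials in the $x^i_j$ as an $\mO$-basis. In particular the localisation variable $t=\dq{n}^{-1}$ is redundant, so $\phi$ is surjective; being a surjection from a module spanned by $\ell^{n^2}$ elements onto a free module of rank $\ell^{n^2}$, it must carry those generators to a basis, whence $\phi$ is an isomorphism and the claimed basis for $A/I\cong o^\intf_\nu(GL_n)$ follows. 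Invertibility of $\dq{n}$ in the quotient is then immediate, as it holds in $o^\intf_\nu(GL_n)$ by construction (its inverse being expressed through the monomial basis).

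\emph{Main obstacle.} The crux is the nondegeneracy over the \emph{integral} form $\mO$: one must verify that the annihilator, under Takeuchi's pairing, of $\langle E_\alpha^\ell, F_\alpha^\ell, J_j^\ell-1\rangle$ is precisely the ideal generated by \eqref{eq:rel-small-o}, with no additional relations, and that the induced pairing remains perfect after base change to $\mO$ rather than only over the fraction field $\mathrm{Frac}(\mO)=\mQ(\nu)$. This is the technical heart of \cite{Tak}*{Section~5}. If one prefers to argue by specialisation, the rank equality over $\mQ(\nu)$ transfers to $\mO$ by torsion-freeness: the reduced monomials are independent over $\mQ(\nu)$, and the kernel of $\mO^{\ell^{n^2}}\twoheadrightarrow A/I$, being killed by $\otimes_\mO \mQ(\nu)$, is torsion and hence zero inside the free module $\mO^{\ell^{n^2}}$.
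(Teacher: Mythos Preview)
Your proposal is correct and aligns with the paper's approach: the paper does not supply its own argument for this lemma but simply cites \cite{Tak}*{Section~5}, and your write-up accurately reconstructs the structure of Takeuchi's proof, correctly isolating the nondegeneracy of the induced pairing $o^\intf_\nu(GL_n)\otimes u^\intf_\nu(\mathfrak{gl}_n)\to\mO$ (equivalently, the rank count $\ell^{n^2}$) as the essential input beyond the elementary spanning argument. One small imprecision worth tightening: in your bi-ideal step, the identity $\bigl(\sum_k x^i_k\otimes x^k_j\bigr)^{\ell}=\sum_k (x^i_k)^\ell\otimes(x^k_j)^\ell$ holds because the summands pairwise $\nu^{-2}$-commute (not $\nu$-commute), and one uses that $\nu^{-2}$ is still a primitive $\ell$-th root since $\ell$ is odd.
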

Thus, $o^\intf_\nu(GL_n)$ is a free $\mO$-module of rank $\ell^{n^2}=\ell^{\dim \mathfrak{gl}_n}$. Extending scalars to $\mC$ we see that $o_\epsilon(GL_n)$ is $\ell^{n^2}$-dimensional.

We remark that the additional relations \eqref{eq:rel-small-o} are easier than those for the small quantum groups in Lemma~\ref{lem:small-u-as-quo} which are indexed by positive roots.

\begin{defn}[Small quantum function algebras, $SL_n$-type]\label{pres-oqsln}
We define $o^\intf_\nu(SL_n)$ to be the  quotient of $o^\intf_\nu(GL_n)$ by the ideal generated by 
$$\dnu{n}-1.$$
As this ideal is a Hopf ideal, $o^\intf_\nu(SL_n)$ is a Hopf algebra. 
Further, we define 
$$o_\epsilon(SL_n)=o^\intf_\nu(SL_n)\otimes_\mO\mC.$$
\end{defn}
The Hopf algebra $o^\intf_\nu(SL_n)$ has rank $\ell^{n^2-1}=\ell^{\dim \mathfrak{sl}_n}$ as an $\mO$-module. For completeness, we summarize these  presentations below.

The same presentations apply to $o_\epsilon(GL_n)$ and $o_\epsilon(SL_n)$ when extending scalars to $\mC$ via the algebra homomorphism $\mO\hookrightarrow \mC, q\mapsto \epsilon$.

\subsection{Quantum function algebras as FRT algebras}

We will now identify the FRT algebras from Section \ref{sec:AC-coend} associated to certain categories of representations over quantum groups 
with the corresponding quantum function algebras from Section \ref{sec:q-fun-alg}. In the following, denote 
$$\mathfrak{g}_n=
\mathfrak{gl}_n\text{ or }\mathfrak{sl}_n.$$

We start by considering the integral case. Recall that $\mI=\mZ[q,q^{-1}]$ and consider the $\mI$-Hopf algebra $U_q^\intf(\mathfrak{g}_n)$ and the $\mI$-tensor category category $\cC=\flRmod{U_q^\intf(\mathfrak{g}_n)}{\mI}$ of modules which are finitely-generated projective as $\mI$-modules.

\begin{defn}[Type I representations]
We define the $\mI$-linear category $\cC^\intf_q(\mathfrak{g}_n)$ of \emph{integral type I representations} of $U^\intf_q(\mathfrak{g}_n)$ as the Karoubian tensor subcategory of the category $\cC$ generated by $V_1$ and $V_1^*$ from Lemma \ref{lem:V1-integral}.

Similarly, we define 
$\cC_q(\mathfrak{g}_n)$ and $
\cC_\epsilon(\mathfrak{g}_n)$,
the tensor categories of \emph{type I representations} over  
$U_q(\mathfrak{g}_n)$ and $U_\epsilon(\mathfrak{g}_n)$, respectively, as the tensor subcategories generated by the modules $V_1^\Bbbk$ and $V_1^\mC$, respectively, and their duals.
\end{defn}

In can be shown that the simple modules in $\cC_q(\mathfrak{g}_n)$ are precisely the simple type I representations of $U_q(\mathfrak{g}_n)$ \cite{Jan}*{Section~5.2}. These simple modules are precisely the simple direct summands of tensor powers of $V_1$ and $V_1^*$ \cite{Jan}*{5A.10~Proposition}. 
Hence, $\cC_q(\mathfrak{g}_n)$ corresponds to the (abelian, semisimple) subcategory of $\flRmod{U_q(\mathfrak{g}_n)}{\Bbbk}$ of representations that are direct sums of simple modules of weight $(q^{m_1},\ldots, q^{m_{r}})$, i.e., where 
$$K_{\alpha_i}\cdot v=q^{m_i}v,\qquad m_i\in \mZ,$$
for a highest weight vector $v$, and $K_{\alpha_i},\ldots, K_{\alpha_r}$ a choice of simple roots for $\mathfrak{g}_n$. In the case of $\mathfrak{gl}_n$ we have $r=n$ and $K_{\alpha_i}=J_i$ while for $\mathfrak{sl}_n$ we have $r=n-1$ and $K_{\alpha_i}=K_i$.

\begin{prop}\label{prop:Tak-GLn}
Set $\cC=\cC^\intf_q(\mathfrak{g}_n)$.  Then $A_\cC$ is isomorphic as a Hopf algebra to $O_q^\intf(G_n)$, with $G_n=GL_n$ or $SL_n$, respectively. The isomorphism is given on generators by 
$x^i_j\mapsto c^{V_1}_{f_i,v_j}$ and $S\left(x^i_j\right)\mapsto c^{V_1^*}_{v_i,f_j}$, using the set of free $\mI$-generators $\left\{v_1,\ldots, v_n\right\}$ of $V_1$ from Lemma \ref{lem:V1-integral} and the dual basis $\left\{f_1,\ldots, f_n\right\}$ of $V_1^*$.

\noindent Moreover, this isomorphism is one of $U_q^\intf(\mathfrak{g}_n)$-bimodule algebras. 
\end{prop}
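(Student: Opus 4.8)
The plan is to instantiate the template laid out after \Cref{lem:AC-Hopf}: present $O_q^\intf(G_n)$ as an algebra admitting a surjection onto $A_\cC\subseteq (U_q^\intf(\mathfrak{g}_n))^\circ$ sending the generators $x^i_j$ to the coordinate functions $c^{V_1}_{f_i,v_j}$, and then identify injectivity of this map with non-degeneracy of the associated Hopf pairing. First I would fix notation: by \Cref{lem:tensorgenerators}, since $\cC=\cC^\intf_q(\mathfrak{g}_n)$ is the Karoubian tensor subcategory generated by $V_1$ and $V_1^*$, the algebra $A_\cC$ is generated by the coordinate functions $c^{V_1}_{f_i,v_j}$ together with $c^{V_1^*}_{v_i,f_j}$. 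Setting $x^i_j:=c^{V_1}_{f_i,v_j}$, I would record the pairing $\inner{x^i_j,h}=f_i(h\cdot v_j)$, extended to products by \eqref{eq:extend-pairing}; its values on the algebra generators of $U_q^\intf(\mathfrak{g}_n)$ are read off directly from the explicit action in \Cref{lem:V1-integral}.

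Next I would build the surjection. Because the pairing is a Hopf pairing, the value of $\inner{-,h}$ on a quadratic expression in the $x^i_j$ is governed by the action of $U_q^\intf(\mathfrak{g}_n)$ on $V_1\otimes V_1$ through the coproduct; verifying that relations \eqref{eq:Mqn1} and \eqref{eq:Mqn2} lie in the left radical is then the standard FRT computation with the $R$-matrix on $V_1\otimes V_1$, i.e.\ the rank-$n$ version of Examples~\ref{ex:sl2} and \ref{ex:gl2}. This produces an algebra homomorphism $O^\intf_q(M_n)\to A_\cC$. For $SL_n$ I would check that $\dq{n}-1$ is in the radical: by \Cref{lem:dqn} the element $\dq{n}$ is grouplike and pairs as the character of the action on $\Lambda^n V_1$; since this top power is the trivial $U_q^\intf(\mathfrak{sl}_n)$-module, $\inner{\dq{n},h}=\varepsilon(h)$ for all $h$, so the map factors through $O^\intf_q(SL_n)$. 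For $GL_n$ the image of $\dq{n}$ is a nontrivial grouplike, hence convolution-invertible in $(U_q^\intf(\mathfrak{gl}_n))^\circ$ with inverse again a coordinate function lying in $A_\cC$, so the map extends over the localization $O^\intf_q(GL_n)=O^\intf_q(M_n)[\dq{n}^{-1}]$. Surjectivity then follows because, by \Cref{lem:OqGL-antipode}, the antipodes $S(x^i_j)$ are polynomial expressions in the $x^i_j$ (and $\dq{n}^{-1}$ for $GL_n$), so their images are defined and, by the antipode formula \eqref{eq:antipode-matrix-coeff}, coincide with the remaining generators $c^{V_1^*}_{\cdot,\cdot}$ of $A_\cC$.

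For injectivity I would invoke the discussion after \Cref{lem:AC-Hopf}: the homomorphism $\phi\colon O_q^\intf(G_n)\to (U_q^\intf(\mathfrak{g}_n))^\circ$ is injective precisely when the Hopf pairing $\inner{-,-}$ has trivial left radical, and this non-degeneracy over the integral base $\mI=\mZ[q,q^{-1}]$ is exactly Takeuchi's theorem \cite{Tak}. This is the step I expect to be the main obstacle, and the reason the argument is organised around \cite{Tak}: non-degeneracy must be established over the ring $\mI$ rather than over a field, which is precisely what the careful choice of integral forms in \Cref{sec:q-groups,sec:q-fun-alg} is designed to permit. Granting it, $\phi$ is an isomorphism of algebras onto $A_\cC$.

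It then remains to match the additional structures, which is routine. Using the coalgebra formulas of \Cref{lem:AC-Hopf} one computes $\Delta(x^i_j)=\sum_k c^{V_1}_{f_i,v_k}\otimes c^{V_1}_{f^k,v_j}=\sum_k x^i_k\otimes x^k_j$ and $\varepsilon(x^i_j)=f_i(v_j)=\delta^i_j$, matching \eqref{eq:Mn-coalgebra}; hence $\phi$ is a bialgebra and therefore Hopf isomorphism, the antipode sending the generators to the $V_1^*$-coordinate functions via \eqref{eq:antipode-matrix-coeff} (with the trivial pivotal structure of $\flmod{\mI}$), as stated. Finally, the bimodule assertion is immediate from Examples~\ref{ex:finiteHopf} and \ref{ex:filteredHopf}, where the coordinate-function map $i_{V_1}$ is shown to be a morphism of $U_q^\intf(\mathfrak{g}_n)^\cop\otimes U_q^\intf(\mathfrak{g}_n)$-modules; thus $\phi$ automatically intertwines the bimodule structures on $O_q^\intf(G_n)$ and $A_\cC$.
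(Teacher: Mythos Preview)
Your proposal is correct and follows essentially the same route as the paper: set up the algebra map $O_q^\intf(G_n)\to A_\cC\subseteq (U_q^\intf(\mathfrak{g}_n))^\circ$ via coordinate functions on $V_1$, verify the FRT relations lie in the left radical of the induced pairing, handle $\dq{n}$ separately for $GL_n$ (localization) and $SL_n$ (trivial on $\Lambda^n V_1$), obtain surjectivity from the antipode covering the $V_1^*$-coordinate functions, and invoke Takeuchi's non-degeneracy theorem over $\mI$ for injectivity. The only cosmetic difference is that the paper checks surjectivity by computing the $U_q^\intf$-action on $V_1^*$ directly to see that the $c^{V_1^*}_{v_i,f_j}$ are already $\mI$-multiples of the $x^i_j$, whereas you route this through the antipode formula \eqref{eq:antipode-matrix-coeff}; both arguments amount to the same observation.
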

\begin{proof}
First, we consider the $GL_n$-case. By definition, $\cC=\cC^\intf_q(\mathfrak{gl}_n)$ is generated as an $\mI$-tensor category by $V_1$.
We check that the morphism $\phi\colon A_\cC'\to O_q^\intf(GL_n)$ from the statement of the Proposition is a morphism of algebras, i.e., that the $\phi(x_j^i)=c_{f_i,v_j}^{V_1}$ satisfy the relations of $O_q(GL_n)$.
For this, consider the pairing $$\langle ~,~\rangle\colon O_q^\intf(M_n)\otimes U_q^\intf(\mathfrak{gl}_n)\to \mZ[q,q^{-1}], \quad \inner{x^i_j,u}=c_{f_i,v_j}(u)=f_i(u\cdot v_j).$$
This pairing is given on generators by 
\begin{gather*}
\inner{x^i_j,J_k}=\delta^i_j q^{\delta^j_k}, \quad \inner{x^i_j,{\textstyle\stirling{J_k}{l}_q}}=\delta^1_l\delta^j_k\delta^i_j,\\
 \inner{x^i_j,E_k^{(l)}}=\delta^1_l\delta^j_k\delta^i_{j-1},\quad \inner{x^i_j,F_k^{(l)}}=\delta^1_l\delta^j_k\delta^i_{j+1}, 
\end{gather*}
and extends to products via 
$$\inner{xy,h}=\inner{x,h_{(1)}}\inner{y,h_{(2)}},\qquad\inner{x,gh}=\inner{x_{(1)},g}\inner{x_{(2)},h}.$$
This pairing extends to the pairing $O_q^\intf(GL_n)\otimes U_q^\intf(\mathfrak{gl}_n)\to \mI$ of \cite{Tak}*{Section~3.3}, where the inverse of the quantum determinant is evaluated as 
$$\inner{t,u}=\inner{\dq{n},S(u)},$$
using the antipode.

The action of the generators on the dual $V_1^*$ is given by 
$$J_i\cdot f_j=q^{-\delta^i_j}f_j, \quad E_k\cdot f_k=-q^{-1}f_{k+1}, \quad F_k\cdot f_k=-q^{-1}f_{k-1},$$
and zero otherwise. Thus, the coordinate functions associated with $V_1^*$ are, up to multiplication with elements from $\mI$, given by the $\left(x^i_j\right)_{i,j}$ as they take the same values on the generators.
Since $V_1,V_1^*$ are tensor generators for the $\mI$-tensor category $\cC=\mathcal{C}^\intf_q(\mathfrak{gl}_n)$, Lemma \ref{lem:tensorgenerators} shows that the $\left(x^i_j\right)_{i,j}$, and the corresponding dual coordinate functions $c_{f_i,v_j}^{V_1^*}$ identifying $V_1^{**}\cong V_1$, generate $A_\cC$. It is enough to formally adjoin the inverse $t$ of the quantum determinant to generate all dual coordinate functions from the $\left(x^i_j\right)_{i,j}$ and $t$ since $O_q^\intf(GL_n)=O_q^\intf(M_n)[\dq{n}^{-1}]$ is a Hopf algebra. Thus, the map $\phi\colon O_q^\intf(M_n)[\dq{n}^{-1}]\to U_q^\intf(\mathfrak{gl}_n)^\circ$ given by the pairing is surjective.

Now, the pairing is non-degenerated by \cite{Tak}*{4.4.~Theorem}, where this property is referred to as \emph{connectedness}, i.e., injectivity of the map $\phi$. As discussed in Section~\ref{sec:AC-coend}, this implies that $A_\cC$ and $O_q^\intf(GL_n)$ are isomorphic.

The $SL_n$-case follows, similarly, from \cite{Tak}*{4.11.~Theorem} by restricting the pairing to a pairing $O_q^\intf(M_n)\otimes U_q^{\intf}(\mathfrak{sl}_n)$ and observing that $\dq{n}-1$ is in the left radical. 
\end{proof}

Now consider any morphism of rings $\mI\to R$ and denote $U^R=U\otimes_\mI R$ for a Hopf algebra $U$ with scalars extended to $R$. The functor of extension of scalars
$$\lMod{U}_\mI\to \lMod{U^R}, \quad V\mapsto V\otimes_\mI R$$
is left adjoint to restriction and thus preserves all colimits that exist in $\lMod{U}_R$. Since $A_\cC$ is defined as a colimit, we derive that for the subcategory $\cC^R$ generated by $V_1^R$ and its dual, $A_{\cC^R}=A_{\cC}\otimes_\mI R$ is simply given by extension of scalars. 
Thus, we derive the following corollaries from Proposition~\ref{prop:Tak-GLn}.

\begin{cor}
There are isomorphisms of Hopf algebras (and $U$-bimodule algebras) $A_\cC\cong O$, where:
\begin{enumerate}
\item[(i)] $\cC=\cC_q(\mathfrak{gl}_n)$,  $O=O_q(GL_n)$, and $U=U_q(\mathfrak{gl}_n)$,
\item[(ii)]$\cC=\cC_\epsilon(\mathfrak{gl}_n)$, $O=O_\epsilon(GL_n)$, and $U=U_\epsilon(\mathfrak{gl}_n)$,
\item[(iii)] $\cC=\cC_q(\mathfrak{sl}_n)$, $O=O_q(SL_n)$, and  $U=U_q(\mathfrak{sl}_n)$,
\item[(iv)]$\cC=\cC_\epsilon(\mathfrak{sl}_n)$, $O=O_\epsilon(SL_n)$, and $U=U_\epsilon(\mathfrak{sl}_n)$.
\end{enumerate}
\end{cor}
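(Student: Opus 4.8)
The plan is to obtain all four isomorphisms from the integral case, Proposition~\ref{prop:Tak-GLn}, by base change along suitable ring homomorphisms $\mI\to R$. Proposition~\ref{prop:Tak-GLn} supplies, for the integral type I category $\cC=\cC^\intf_q(\mathfrak{g}_n)$, an isomorphism $A_\cC\cong O_q^\intf(G_n)$ of Hopf algebras that is simultaneously an isomorphism of $U_q^\intf(\mathfrak{g}_n)$-bimodule algebras. The mechanism for propagating this to the specialized settings is the observation recorded just before the statement: for any ring map $\mI\to R$ the extension-of-scalars functor $\lMod{U_q^\intf(\mathfrak{g}_n)}_\mI\to \lMod{U^R}$ is left adjoint to restriction, hence preserves colimits, and therefore commutes with the coend defining the FRT algebra, giving $A_{\cC^R}\cong A_\cC\otimes_\mI R$ for the subcategory $\cC^R$ generated by $V_1^R$ and its dual.

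First I would record, case by case, the relevant base ring and ring homomorphism: for (i) and (iii) this is the localization $\mI\hookrightarrow \Bbbk$ with $q\mapsto q$, while for (ii) and (iv) it is $\mI\to \mC$ with $q\mapsto \epsilon$. By Definition~\ref{def:U-at-epsilon} the quantum group $U$ in each case is exactly $U_q^\intf(\mathfrak{g}_n)\otimes_\mI R$, and by Lemma~\ref{lem:V1-integral} together with its scalar-extension the standard module $V_1^R=V_1\otimes_\mI R$ is the base change of $V_1$. Consequently the category $\cC$ of the corollary is precisely the category $\cC^R$ generated by $V_1^R$ and its dual, so the colimit-preservation above yields $A_\cC\cong A_{\cC^\intf_q(\mathfrak{g}_n)}\otimes_\mI R\cong O_q^\intf(G_n)\otimes_\mI R$.

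Next I would identify the right-hand side with the algebra $O$ named in each case. For $G_n=GL_n$, Definition~\ref{defn:OqLGLn-SLn} defines $O_q(GL_n)$ and $O_\epsilon(GL_n)$ as exactly these base changes, settling (i) and (ii); the $SL_n$ cases (iii) and (iv) follow in the same way from the definitions of $O_q(SL_n)$ and $O_\epsilon(SL_n)$. Because the coproduct, counit, antipode, and the left $U^\cop\otimes U$-action appearing in Proposition~\ref{prop:Tak-GLn} are all given by $\mI$-linear structure maps, and because extension of scalars is strong monoidal and so transports each such structure, the isomorphisms $A_\cC\cong O$ remain isomorphisms of Hopf algebras and of $U$-bimodule algebras after specialization.

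The hard part — really the only point requiring care — is the compatibility of the tensor-categorical base change with the coend: one must ensure that $A_{\cC^R}$ is genuinely computed from the single pair of tensor generators $V_1^R,(V_1^R)^*$ and not from some \emph{a priori} larger category of type I modules over $U$. This is exactly what Lemma~\ref{lem:tensorgenerators} provides, since it exhibits $A_\cC$ as the subalgebra generated by the coordinate functions on a chosen set of tensor generators; as extension of scalars preserves tensor products and duals, $V_1^R$ and its dual tensor-generate $\cC^R$, and no additional generators can appear under specialization.
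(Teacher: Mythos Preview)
Your proposal is correct and follows essentially the same approach as the paper: both derive the corollary from Proposition~\ref{prop:Tak-GLn} by base change along the ring maps $\mI\hookrightarrow\Bbbk$ and $\mI\to\mC$, using that extension of scalars is a left adjoint and hence commutes with the coend defining $A_\cC$. Your write-up is simply more explicit about the role of Lemma~\ref{lem:tensorgenerators} and the identification of the base-changed algebras with the named quantum function algebras.
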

We remark that \cite{Tak}*{Section~4} shows the non-degenerateness of the pairings of quantum function algebras and quantum groups over any base ring containing $\mI$ (e.g., $\mK$, $\mC$ as used in the above Corollary).

We now define $O_\nu(GL_n)^\intf:=O_q(GL_n)^\intf\otimes_\mI\mO$ and $U^\intf_\nu(\mathfrak{gl}_n):=U^\intf_q(\mathfrak{gl}_n)\otimes_\mI\mO$ and define $ u'_\nu(\mathfrak{gl}_n)$ to be the $\mO$-subalgebra of $U^\intf_\nu(\mathfrak{gl}_n)$ generated by the $J_i^{\pm 1}, E_j,F_j$.

\begin{prop} 
The restriction of the pairing $O^\intf_\nu(GL_n)\otimes U^\intf_\nu(\mathfrak{gl}_n)\to \mO$ to $O^\intf_\nu(GL_n)\otimes u'_\nu(\mathfrak{gl}_n)$ has right radical generated by $J_i^{\pm\ell}-1$ and left radical generated by the relations \eqref{eq:small-o-rel}. 
\end{prop}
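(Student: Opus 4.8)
The plan is to compute each radical in two movements: first exhibit the claimed generators inside the radical, then show they exhaust it by identifying the induced pairing on the quotients with the perfect pairing between the small quantum function algebra and the small quantum group. Throughout I use the pairing $\langle x^i_j,u\rangle=f_i(u\cdot v_j)$ of Proposition~\ref{prop:Tak-GLn}, extended to products by \eqref{eq:extend-pairing} and specialised at $q=\nu$. Both radicals are two-sided ideals (for a Hopf pairing the left radical satisfies $\langle ab,h\rangle=\langle a,h_{(1)}\rangle\langle b,h_{(2)}\rangle$, and dually for the right radical), so in each case it suffices to treat the generators.

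For the right radical, the standard module of Lemma~\ref{lem:V1-integral} has $J_i$ acting diagonally with eigenvalues $q^{\delta^i_k}$, so $J_i^{\pm\ell}$ acts by $q^{\pm\ell\delta^i_k}=1$ once $q=\nu$. Hence $J_i^{\pm\ell}$ acts as the identity on every tensor power of $V_1$ and of $V_1^*$, whence $\langle a,J_i^{\pm\ell}\rangle=\varepsilon(a)=\langle a,1\rangle$ for every coordinate function $a$, placing $J_i^{\pm\ell}-1$ in the right radical. Moreover, since $[\ell]_\nu!=0$, every root vector already satisfies $E_\alpha^{\ell}=[\ell]_\nu!\,E_\alpha^{(\ell)}=0=F_\alpha^\ell$ inside $U^\intf_\nu(\mathfrak{gl}_n)$, hence inside $u'_\nu(\mathfrak{gl}_n)$; together with Lemma~\ref{lem:small-u-as-quo} this identifies $u'_\nu(\mathfrak{gl}_n)/\langle J_i^{\pm\ell}-1\rangle$ with $u_\nu^\intf(\mathfrak{gl}_n)$, so that no relation beyond $J_i^{\pm\ell}-1$ is needed on the quantum group side.

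For the left radical I must show each generator of \eqref{eq:small-o-rel} annihilates all of $u'_\nu(\mathfrak{gl}_n)$. Writing $(x^i_j)^\ell=c^{V_1^{\otimes\ell}}_{f_i^{\otimes\ell},v_j^{\otimes\ell}}$, the value $\langle(x^i_j)^\ell,u\rangle$ is the $(f_i^{\otimes\ell},v_j^{\otimes\ell})$ matrix coefficient of $u$ acting on $V_1^{\otimes\ell}$. Grading $u'_\nu(\mathfrak{gl}_n)$ by weight, any monomial of nonzero weight sends $v_j^{\otimes\ell}$ into a different weight space and so contributes $0$, exactly as $\varepsilon$ does; as $v_i^{\otimes\ell}$ and $v_j^{\otimes\ell}$ have distinct weights for $i\neq j$, this already gives $\langle(x^i_j)^\ell,u\rangle=0$. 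For $i=j$ only weight-zero monomials survive, and there $J_k^{c}$ acts on $v_i^{\otimes\ell}$ by $q^{\ell c\,\delta^i_k}=1=\varepsilon(J_k^c)$; the remaining weight-zero monomials reduce, via $[E_k,F_k]=\tfrac{K_k-K_k^{-1}}{q-q^{-1}}$ together with $\langle(x^i_i)^\ell,K_k^{\pm1}\rangle=q^{\mp\ell(\cdots)}=1$ at $q=\nu$, to the torus modulo monomials of nonzero weight (the simplest case $E_kF_k$ returns nothing to $v_i^{\otimes\ell}$ since each $E_k,F_k$ shifts a single tensor label by one). Hence $\langle(x^i_i)^\ell-1,u\rangle=0$, and all relations \eqref{eq:small-o-rel} lie in the left radical.

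Combining the inclusions, the pairing descends to $\overline{\langle\,,\rangle}\colon o_\nu^\intf(GL_n)\otimes u_\nu^\intf(\mathfrak{gl}_n)\to\mO$, using Lemma~\ref{lem:Tak-OGLn-quotient} and Lemma~\ref{lem:small-u-as-quo} to identify the two quotients. To conclude that the radicals are \emph{exactly} the stated ideals it remains to prove $\overline{\langle\,,\rangle}$ is nondegenerate, i.e.\ that both induced maps into the $\mO$-duals are injective; this is the crux. Both factors are free $\mO$-modules of rank $\ell^{n^2}$, so nondegeneracy is equivalent to the Gram matrix between the monomial basis of Lemma~\ref{lem:Tak-OGLn-quotient} and the small PBW basis having nonzero determinant in the domain $\mO$. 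Since $\mO\hookrightarrow\mC$ via $\nu\mapsto\epsilon$, it is enough to verify this after base change to $\mC$, where $o_\epsilon(GL_n)$ is the linear dual Hopf algebra of the finite-dimensional $u_\epsilon(\mathfrak{gl}_n)$ and the pairing is the perfect evaluation pairing, cf.\ \cite{Tak}*{Section~5}; nondegeneracy over $\mC$ then forces the Gram determinant to be nonzero already in $\mO$. The main obstacle lies precisely in this passage from the generic nondegeneracy of Proposition~\ref{prop:Tak-GLn} to the integral statement at a root of unity: the ambient pairing $O^\intf_\nu\otimes U^\intf_\nu$ is genuinely degenerate there, so one cannot merely specialise, and one must show its degeneracy is captured exactly by the two displayed ideals—equivalently that the surviving Gram determinant does not vanish, for which the diagonal factors $1-q^{-2m}$ with $\ell\nmid m$ remain nonzero at $q=\nu$.
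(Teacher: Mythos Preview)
The paper's own proof is a single citation to \cite{Tak}*{5.4.2.--5.4.4.}; it does not attempt an independent argument. Your proposal tries to supply one, but it has a genuine gap in the left-radical step and ultimately falls back on the same reference.

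The gap is in showing $(x^i_j)^\ell$ (and $(x^i_i)^\ell-1$) annihilate all of $u'_\nu(\mathfrak{gl}_n)$. Your weight argument splits into weight-zero and nonzero-weight monomials, but for $i\neq j$ a nonzero-weight monomial $u$ of weight exactly $\ell(\epsilon_i-\epsilon_j)$ sends $v_j^{\otimes\ell}$ into the weight space of $v_i^{\otimes\ell}$, so the matrix coefficient $f_i^{\otimes\ell}(u\cdot v_j^{\otimes\ell})$ is \emph{not} forced to vanish by weights alone. Such monomials do exist in $u'_\nu(\mathfrak{gl}_n)$ for $n\geq 3$: e.g.\ for $n=3$, $i=1$, $j=3$, the element $E_1E_2E_{\alpha_1+\alpha_2}^{\ell-1}$ has weight $\ell(\epsilon_1-\epsilon_3)$ and is nonzero since each exponent is below $\ell$. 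Your text skips this case. Similarly, for $i=j$ the claim that weight-zero monomials beyond the torus ``reduce via $[E_k,F_k]$'' to something already handled is not an argument; one really must compute, and the needed cancellation (e.g.\ $f_k^{\otimes\ell}(E_kF_k\cdot v_k^{\otimes\ell})=q^{\ell-1}[\ell]_{q^2}=0$ at $q=\nu$) is exactly the kind of root-of-unity identity that Takeuchi's Frobenius-type argument packages systematically.

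Finally, your exhaustion step invokes nondegeneracy of the induced pairing over $\mC$ by citing \cite{Tak}*{Section~5}. But that is precisely the content being proved here (the isomorphism $o_\epsilon(GL_n)\cong u_\epsilon(\mathfrak{gl}_n)^*$ is deduced \emph{from} this proposition in the paper), so you are not avoiding the dependence on Takeuchi---you are using it at the hardest point while adding an incomplete verification of the easier inclusions. If you want a self-contained proof, the substantive work is the Frobenius centrality of the $(x^i_j)^\ell$ and the explicit Gram-matrix computation, both carried out in \cite{Tak}.
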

\begin{proof}
This is proved in \cite{Tak}*{5.4.2.--5.4.4.}.
\end{proof}
The above proposition implies that the induced pairing
$$o_\nu^\intf(GL_n)\otimes u_\nu^\intf(\mathfrak{gl}_n)\to \mO$$
is non-degenerate.
By \cite{Tak}*{Section~5.5}, this pairing induces a non-degenerate pairing 
$$o_\nu^\intf(SL_n)\otimes u_\nu^\intf(\mathfrak{sl}_n)\to \mO,$$
since, when restricting to the Hopf subalgebra $u_\nu^\intf(\mathfrak{sl}_n)\subset u_\nu^\intf(\mathfrak{gl}_n)$, the relation $\dq{n}-1$ will be in the right radical. 

Thus, we have isomorphisms of Hopf algebras over $\mO$,
$$
u_\nu^\intf(\mathfrak{gl}_n)^*\cong o_\nu^\intf(GL_n), \quad \text{and}\quad u_\nu^\intf(\mathfrak{sl}_n)^*\cong o_\nu^\intf(SL_n).
$$
Now, denote by $\cD$ the $\mO$-tensor category  of $\flRmod{u_\nu^\intf(\mathfrak{gl}_n)}{\mO}$ consisting of $u_\nu^\intf(\mathfrak{gl}_n)$-modules which are projective of finite rank over $\mO$. Thus, by Lemma \ref{lem:finiteHopf}, we obtain the following result.

\begin{cor}
\label{cor:Tak-small}
For $\mathfrak{g}_n=\mathfrak{gl}_n$ or $\mathfrak{sl}_n$ and $\cD$ as above, $A_{\cD}$ is isomorphic as an algebra in $\cD^{\op}\boxtimes \cD$ (or as a Hopf algebra)  to $o_\nu^\intf(G_n)$, with $G_n=GL_n$ or $SL_n$, respectively. The isomorphism is given on generators as in Proposition \ref{prop:Tak-GLn}.
\end{cor}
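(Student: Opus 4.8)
The plan is to reduce the statement to the finite-rank duality of Lemma~\ref{lem:finiteHopf}, together with its Hopf-algebra refinement Lemma~\ref{lem:finiteHopf-AC}, applied to the Hopf algebra $H = u_\nu^\intf(\mathfrak{g}_n)$ over the base ring $R = \mO$, and then to substitute the concrete identification of the dual with $o_\nu^\intf(G_n)$ coming from the non-degenerate pairing recalled immediately above the statement. First I would check the hypothesis of Lemma~\ref{lem:finiteHopf}, namely that $u_\nu^\intf(\mathfrak{g}_n)$ is a Hopf algebra in $\flmod{\mO}$. This is the point at which finiteness is used: the small quantum group is a free $\mO$-module of finite rank $\ell^{\dim \mathfrak{g}_n}$, dually to the monomial basis of $o_\nu^\intf(G_n)$ in Lemma~\ref{lem:Tak-OGLn-quotient}, so it indeed lies in $\flmod{\mO}$ and $\cD = \flRmod{u_\nu^\intf(\mathfrak{g}_n)}{\mO}$ is a finite tensor category over $\mO$. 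In particular, by Lemma~\ref{lem:finite-case1} the coend $A_\cD$ already exists in $\cD^\op \boxtimes \cD$, with no need to pass to the cocompletion $\wcD$.

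With the hypothesis in place, Lemma~\ref{lem:finiteHopf} yields an isomorphism $A_\cD \cong u_\nu^\intf(\mathfrak{g}_n)^*$ of algebras in $\cD^\op \boxtimes \cD$, and Lemma~\ref{lem:finiteHopf-AC} upgrades this to an isomorphism of Hopf algebras in $\flmod{\mO}$. I would then feed in the dual description recorded just before the statement, namely the isomorphisms of $\mO$-Hopf algebras $u_\nu^\intf(\mathfrak{gl}_n)^* \cong o_\nu^\intf(GL_n)$ and $u_\nu^\intf(\mathfrak{sl}_n)^* \cong o_\nu^\intf(SL_n)$ obtained from the non-degeneracy of Takeuchi's pairing, taking $\cD$ over $u_\nu^\intf(\mathfrak{sl}_n)$ in the $SL_n$-case. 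Composing these with $A_\cD \cong u_\nu^\intf(\mathfrak{g}_n)^*$ produces the desired isomorphism $A_\cD \cong o_\nu^\intf(G_n)$, both as an algebra in $\cD^\op \boxtimes \cD$ and as a Hopf algebra.

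It remains to verify the explicit form of the isomorphism on generators. Here I would trace through the component maps $i_{V_1}\colon V_1^* \boxtimes V_1 \to u_\nu^\intf(\mathfrak{g}_n)^*$ of Equation~\eqref{eq:componentmapsH}, which send $f_i \boxtimes v_j$ to the coordinate function $c_{f_i, v_j}^{V_1}$, and compare them with the generators $x^i_j$ of $o_\nu^\intf(G_n)$ through the pairing $\inner{x^i_j, u} = f_i(u \cdot v_j)$. Since the pairing used here is precisely the reduction, modulo the root-of-unity relations, of the generic pairing in Proposition~\ref{prop:Tak-GLn}, the generator correspondence $x^i_j \mapsto c_{f_i, v_j}^{V_1}$ together with $S(x^i_j) \mapsto c_{v_i, f_j}^{V_1^*}$ is inherited verbatim. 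I expect the only genuinely delicate point to be this bookkeeping: one must confirm that the non-degenerate pairing on the small quantum group is the specialization of the generic Takeuchi pairing, so that no new relations among the generators appear and the identification is literally that of Proposition~\ref{prop:Tak-GLn}; everything else is a direct appeal to the already-established finite-rank lemmas.
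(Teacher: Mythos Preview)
Your proposal is correct and follows exactly the paper's approach: the paper simply invokes Lemma~\ref{lem:finiteHopf} together with the non-degenerate pairing $u_\nu^\intf(\mathfrak{g}_n)^* \cong o_\nu^\intf(G_n)$ established in the preceding paragraph, and you have unpacked this in full detail. One small caveat: your appeal to Lemma~\ref{lem:finite-case1} is not quite on point since that lemma is stated for $\Bbbk$-linear finite tensor categories over a field, whereas here $R=\mO$; but this is harmless, as Lemma~\ref{lem:finiteHopf} already places $A_\cD \cong H^*$ inside $\cD^\op\boxtimes\cD$ directly.
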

By extension of scalars via $\mO\hookrightarrow \mC$, $\nu\mapsto \epsilon$, we obtain the following. 

\begin{cor}
There is an isomorphism 
$A_\cD\cong o_\epsilon(\mathfrak{g}_n)$ of $u_\epsilon(\mathfrak{g}_n)$-bimodule algebras (and $\Bbbk$-Hopf algebras)
for $\cD=\flkmod{u_\epsilon(\mathfrak{g}_n)}{\Bbbk}$, the abelian tensor category of finite-dimensional $u_\epsilon(\mathfrak{g}_n)$-modules.
\end{cor}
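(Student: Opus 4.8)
The plan is to obtain this as a base change of Corollary~\ref{cor:Tak-small} along the ring map $\mO\hookrightarrow \mC$, $\nu\mapsto \epsilon$, reusing the argument given in the paragraph preceding the Corollary to Proposition~\ref{prop:Tak-GLn}. Write $\cD_\mO=\flRmod{u_\nu^\intf(\mathfrak{g}_n)}{\mO}$ for the category appearing in Corollary~\ref{cor:Tak-small} and $\cD_\mC=\flkmod{u_\epsilon(\mathfrak{g}_n)}{\Bbbk}$ for the category in the present statement (the base field being $\mC$, over which $u_\epsilon(\mathfrak{g}_n)$ is defined). First I would record that extension of scalars $(-)\otimes_\mO\mC\colon \lMod{u_\nu^\intf(\mathfrak{g}_n)}_\mO\to \lMod{u_\epsilon(\mathfrak{g}_n)}$ is left adjoint to restriction of scalars, hence preserves all colimits. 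Since $V_1^\mC=V_1^\mO\otimes_\mO\mC$ together with its dual tensor-generate $\cD_\mC$ just as $V_1^\mO$ and its dual generate $\cD_\mO$ (cf.\ Lemma~\ref{lem:tensorgenerators}), and since $A_\cD$ is by definition the coend (a filtered colimit), the coend computing $A_{\cD_\mC}$ is the base change of the one computing $A_{\cD_\mO}$; that is, $A_{\cD_\mC}\cong A_{\cD_\mO}\otimes_\mO\mC$.

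Then I would combine this with Corollary~\ref{cor:Tak-small}, which gives $A_{\cD_\mO}\cong o_\nu^\intf(\mathfrak{g}_n)$ as algebras in $\cD_\mO^\op\boxtimes\cD_\mO$ and as Hopf algebras over $\mO$, with generators as in Proposition~\ref{prop:Tak-GLn}. Base-changing along $\mO\hookrightarrow\mC$ then yields $A_{\cD_\mC}\cong o_\nu^\intf(\mathfrak{g}_n)\otimes_\mO\mC=o_\epsilon(\mathfrak{g}_n)$, the last equality being the definitions in Definitions~\ref{pres-oqgln} and \ref{pres-oqsln}. The $u_\epsilon(\mathfrak{g}_n)$-bimodule structure and the Hopf structure transport under base change because $(-)\otimes_\mO\mC$ is a strong monoidal functor intertwining the $u_\nu^\intf(\mathfrak{g}_n)^{\cop}\otimes u_\nu^\intf(\mathfrak{g}_n)$-action with the $u_\epsilon(\mathfrak{g}_n)^{\cop}\otimes u_\epsilon(\mathfrak{g}_n)$-action after tensoring, while the comultiplication, counit and antipode of $o_\nu^\intf(\mathfrak{g}_n)$ specialize to those of $o_\epsilon(\mathfrak{g}_n)$.

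As a cross-check, and an alternative to the colimit argument, I would note the finiteness shortcut: $u_\epsilon(\mathfrak{g}_n)$ is finite-dimensional over $\mC$, so Lemma~\ref{lem:finite-case1} guarantees that $A_{\cD_\mC}$ already lives in $\cD_\mC^\op\boxtimes\cD_\mC$, and Lemma~\ref{lem:finiteHopf-AC} identifies it with $u_\epsilon(\mathfrak{g}_n)^*$. Because $u_\nu^\intf(\mathfrak{g}_n)$ is free of finite rank over $\mO$, taking the $\mO$-dual commutes with $(-)\otimes_\mO\mC$, so $u_\epsilon(\mathfrak{g}_n)^*\cong u_\nu^\intf(\mathfrak{g}_n)^*\otimes_\mO\mC\cong o_\nu^\intf(\mathfrak{g}_n)\otimes_\mO\mC=o_\epsilon(\mathfrak{g}_n)$, using the isomorphism $u_\nu^\intf(\mathfrak{g}_n)^*\cong o_\nu^\intf(\mathfrak{g}_n)$ established just before Corollary~\ref{cor:Tak-small}. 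I do not expect a genuine obstacle here: the only points requiring care are that base change commutes with the coend, which is guaranteed by left-adjointness, and with $\mO$-duality, which holds by finite freeness; everything else is bookkeeping of the transported module-algebra and Hopf structures.
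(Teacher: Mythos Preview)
Your proposal is correct and follows the same approach as the paper: the paper derives this corollary in a single sentence, ``By extension of scalars via $\mO\hookrightarrow \mC$, $\nu\mapsto \epsilon$,'' applied to Corollary~\ref{cor:Tak-small}, and you have simply spelled out the details of why that base change is legitimate (left-adjointness of extension of scalars, hence preservation of the coend). Your additional cross-check via Lemma~\ref{lem:finiteHopf-AC} and finite freeness is a nice redundancy but is not used in the paper.
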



\section{Presentations for small reflection equation algebras}
\label{sec:results}

This section contains the new results of this paper. We will use the presentations from \Cref{pres-oqgln} and \Cref{pres-oqsln} of $o_\epsilon(GL_n)$ and $o_\epsilon(SL_n)$, respectively, to give a presentation for the corresponding RE algebras, denoted by  $b_\epsilon(GL_n)$ and $b_\epsilon(SL_n)$, respectively, for $\epsilon$ is an odd $\ell$-th root of unity. These RE algebras are defined, in general, in \Cref{sec:BC} and are isomorphic to the covariantized Hopf algebras from \Cref{defn:cov-alg} associated to $o_\epsilon(GL_n)$ and $o_\epsilon(SL_n)$, respectively, by \Cref{lem:cov-is-BC}.

\subsection{The reflection equation algebras at generic parameter}\label{sec:generic-RE}

We start by detailing the presentation of the (infinite-dimensional) reflection equation algebras $B_q(GL_n)$ and $B_q(SL_n)$ for a generic parameter $q$. We use capital letters to denote these infinite dimensional RE algebras and lower case for the finite-dimensional small RE algebras to avoid confusion. 

\begin{defn}[Reflection equation algebras, generic integral form]
Recall $\mI=\mZ[q,q^{-1}]$. We denote by $B_q^\intf(M_n)$ the $\mI$-algebra which is the covariantized algebra $\un{A}$ of the $\mI$-Hopf algebra $A=O_q^\intf(M_n)$ from \Cref{defn:OLqMn}. Moreover, we denote by $B_q^\intf(GL_n)$ and $B_q^\intf(SL_n)$ the $\mI$-algebras $\un{A}$ for the $\mI$-Hopf algebras $A=O_q^\intf(GL_n)$, respectively, $A=O_q^\intf(SL_n)$ from \Cref{defn:OqLGLn-SLn}.
\end{defn}

We note that extension of scalars commutes with the process of passing to the covariantized algebra in \Cref{defn:cov-alg} and we denote the reflection equation algebras over $\Bbbk$ (or $\mK$) by 
$B_q(M_n)$, $B_q(GL_n)$, and $B_q(SL_n)$, respectively, see \Cref{eq:OqGn}. The following Lemma gives presentations for these RE algebras.

The dual R-matrix for $O_q^\intf(M_n)$ is defined by 
$$\cR(x^i_j\otimes x^k_l)=R^{ik}_{jl},$$
where, following  \cite{Majid95book}*{Example~4.5.7}, we use 
\begin{equation}\label{eq:R-matrix-sln}
    R^{ik}_{jl} = q^{-\frac{1}{n}}\left(\delta^i_j \delta^k_l
    q^{\delta^j_l}
    + (q - q^{-1}) \delta_{j>l}\delta^i_l \delta^k_j \right),
\end{equation}
where $\delta_{j>l}=1$ if and only if $j>l$ and zero otherwise, and $\delta^i_j$ is the Kronecker delta.
For the inverse dual R-matrix $$\cR^{-1}(a \otimes b)=\cR((S)a \otimes b), $$ 
see \cite{Majid95book}*{Lemma~2.2.2}, we have
\begin{equation}
(R^{-1})^{ik}_{jl} = q^{\frac{1}{n}}\left(\delta^i_j \delta^k_l
q^{-\delta^j_l}
- (q - q^{-1})\delta_{j>l} \delta^i_l \delta^k_j\right).
\end{equation}

Following \cites{KS,JW}, to twist the relations of the FRT algebra $A$ to give relations in the RE algebra $\un{A}$, we consider the following linear maps
\begin{equation}\label{eq:twistingmap}
\Psi\colon A\to \un{A}, \quad \Psi(1)=1, \quad \Psi\left(x^i_j\right)=u^i_j,
\end{equation}
which fixes the generators and is extended iteratively by sending a product $ab$ of elements in $A$ to the element
\begin{equation}
\begin{split}
\Psi(ab)=&\cR^{-1}\left(a_{(1)}, b_{(1)}\right)  \cR \left( a_{(3)}, b_{(2)}\right) \Psi(a_{(2)})\br \Psi(b_{(3)})\\
=&\cR\left(S(a_{(1)}), b_{(1)}\right)  \cR \left( a_{(3)}, b_{(2)}\right) \Psi(a_{(2)})\br \Psi(b_{(3)}).
\end{split}
\label{eq:twist-iteration}
\end{equation}
It follows that $\Psi$ is an isomorphism of free modules over the chosen base ring (e.g.\ $\mI$, $\mC$).
In particular, for the dual R-matrix in Equation \ref{eq:R-matrix-sln}, $\Psi$ is given on quadratic elements by 
\begin{align}\label{eq:twist-quadratic}
\begin{split}
\Psi(x^i_jx^k_l)=& 
    q^{\delta^j_k-\delta^i_k}u^i_j \br u^k_l
    + \delta^k_j (q - q^{-1})\sum_{d<j}
    q^{-\delta^i_k}u^i_d \br u^d_l
\\&
- \delta_{k>i}(q - q^{-1})
        q^{\delta^i_j}u^k_j \br u^i_l
   -\delta_{k>i}\delta^i_j(q - q^{-1})^2  \sum_{b<j} 
    u^k_b \br u^b_l.
    \end{split}
\end{align}
We refer to application of the map $\Psi$ as \emph{twisting}. 

\smallskip

The following presentation is known and given in detail in, e.g.\ \cite{JW}*{Equation (2.4)} and \cite{DL}*{Section~3} with different conventions (see Remark~\ref{rmk:compare-JW}).

\begin{lem}\label{lem:BqMn-relations}
The $\mI$-algebra $B^\intf_q(M_n)$ is generated by $u_i^j$ for $i,j=1,\ldots, n$ subject to the following relations for the braided product:
\begin{align}
u^i_l\br u^i_j-q^{\delta^i_j-\delta^i_l+1}u^i_j \br u^i_l
=&\; 
 \delta^i_j (q^2 - 1)\sum_{d<j}
u^i_d \br u^d_l- \delta^i_l \left(1 - q^{-2}\right)\sum_{d<l}
   u^i_d \br u^d_j,
\label{eq:BqMn-rel-1}
\\ u^k_j \br u^i_j-q^{\delta^j_k-\delta^i_j-1} 
 u^i_j \br u^k_j=& 
\;\delta^k_j \left(1 - q^{-2}\right)\sum_{d<j}
   u^i_d \br u^d_j
-\delta^i_j\left(1-q^{-2}\right) \sum_{b<j} 
    u^k_b \br u^b_j,
\label{eq:BqMn-rel-2}
\\
\begin{split}
 u^k_j \br u^i_l-q^{\delta^k_l-\delta^i_j} 
 u^i_l \br u^k_j=& -(q - q^{-1})
    q^{\delta^i_l-\delta^i_j} u^k_l \br u^i_j
\\
&-\delta^i_l(q - q^{-1})^2 q^{-1}  \sum_{b<l} 
    u^k_b \br u^b_j,
\\&
+\delta^k_l (q - q^{-1})q^{-\delta^i_j}\sum_{d<l}
   u^i_d \br u^d_j\\
&- \delta^i_j \left(1 - q^{-2}\right)\sum_{d<j}
   u^k_d \br u^d_l,
\end{split}
\label{eq:BqMn-rel-3}
\\
\begin{split}
 u^k_l\br u^i_j-q^{\delta^j_k-\delta^i_l} 
 u^i_j \br u^k_l &=
 \delta^k_j (q - q^{-1})q^{-\delta^i_l}\sum_{b<j}
   u^i_b \br u^b_l\\
&- \delta^i_l \left(1 - q^{-2}\right) \sum_{d<l}
   u^k_d \br u^d_j ,
\end{split}\label{eq:BqMn-rel-4}
\end{align}
for all $i<k$, $j<l$.
\end{lem}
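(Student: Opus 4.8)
The plan is to exploit the twisting map $\Psi\colon A\to \un{A}$ of \eqref{eq:twistingmap} for $A=O_q^\intf(M_n)$. Comparing the defining recursion \eqref{eq:twist-iteration} with the reconstruction formula \eqref{eq:brproduct2}, one sees that $\Psi$ is precisely the identity on the common underlying $\mI$-module of $A$ and $\un{A}=B^\intf_q(M_n)$: by \Cref{defn:cov-alg} the two algebras share the same coalgebra, unit, and underlying module, and only the product differs, so the identity map satisfies \eqref{eq:twist-iteration} and hence equals $\Psi$. In particular $\Psi$ is a linear isomorphism, and whenever $w=w'$ holds as an identity of elements of $A$ we automatically obtain $\Psi(w)=\Psi(w')$ in $\un{A}$. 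The strategy is therefore to apply $\Psi$ to each defining relation of $O_q^\intf(M_n)$ from \eqref{eq:Mqn1}--\eqref{eq:Mqn2}: expanding both sides by means of the explicit quadratic formula \eqref{eq:twist-quadratic} rewrites every original product $x^a_b x^c_d$ as an $\mI$-combination of braided products $u^\bullet_\bullet\br u^\bullet_\bullet$, and equating the two expansions yields a relation phrased entirely in the braided product.

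Concretely, I would match the four families as follows. Twisting the same-row relation $x^i_lx^i_j=qx^i_jx^i_l$ (with $j<l$) from \eqref{eq:Mqn1} produces \eqref{eq:BqMn-rel-1}; twisting the same-column relation $x^k_ix^j_i=qx^j_ix^k_i$ produces \eqref{eq:BqMn-rel-2}; and twisting the two relations of \eqref{eq:Mqn2}, namely $x^i_lx^j_k=x^j_kx^i_l$ and $x^j_lx^i_k-x^i_kx^j_l=(q-q^{-1})x^i_lx^j_k$, produces \eqref{eq:BqMn-rel-3} and \eqref{eq:BqMn-rel-4}, respectively. In each case one inserts the four-term expansion \eqref{eq:twist-quadratic} for the relevant index pattern, uses the Kronecker deltas to collapse the summations, and collects terms; the quadratic leading coefficients $q^{\delta^j_k-\delta^i_k}$ account for the $q$-powers appearing on the left-hand sides of \eqref{eq:BqMn-rel-1}--\eqref{eq:BqMn-rel-4}, while the $(q-q^{-1})$-corrections account for the sums over $d<j$ and $b<l$ on the right-hand sides. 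One checks for instance that twisting $x^i_lx^i_j=qx^i_jx^i_l$ yields the coefficient $q^{1+\delta^i_j-\delta^i_l}$ of $u^i_j\br u^i_l$, matching \eqref{eq:BqMn-rel-1}.

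It remains to argue completeness, i.e.\ that \eqref{eq:BqMn-rel-1}--\eqref{eq:BqMn-rel-4} is a \emph{full} set of relations. Let $\tilde{B}$ be the $\mI$-algebra abstractly presented by the generators $u^i_j$ and these relations; there is an evident surjection $\tilde{B}\twoheadrightarrow \un{A}$, since the $u^i_j$ generate $\un{A}$ (as $\Psi$ is surjective and fixes the generators) and the relations hold by the computation above. For injectivity I would fix a total order on the index pairs $(i,j)$ and observe that each of \eqref{eq:BqMn-rel-1}--\eqref{eq:BqMn-rel-4} rewrites an out-of-order braided product as an invertible scalar times the ordered product plus correction terms whose indices are strictly smaller (the sums run over $d<j$, $b<l$); hence $\tilde{B}$ is spanned over $\mI$ by the ordered braided monomials $\prod (u^i_j)^{\br n_{ij}}$. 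Since $A$ has the PBW basis of ordered monomials recorded after \Cref{defn:OLqMn} and $\Psi$ relates these, unitriangularly and with invertible leading coefficients, to the ordered braided monomials, the latter are $\mI$-linearly independent in $\un{A}$. Thus the surjection carries a spanning set to a basis and is an isomorphism.

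The main obstacle is the second step: the bookkeeping in the quadratic twist \eqref{eq:twist-quadratic}. Each relation of \eqref{eq:Mqn1}--\eqref{eq:Mqn2} must be twisted over all admissible index ranges, and the Kronecker-delta case analysis (distinguishing whether upper and lower indices coincide, and tracking the ordering constraints $j<l$, $i<k$, $d<j$) is where sign and $q$-power errors are easiest to make; verifying that the correction terms assemble exactly into the stated right-hand sides, with no residual terms, is the genuinely laborious part. The completeness argument, by contrast, is routine once the linear-isomorphism property of $\Psi$ from \eqref{eq:twist-iteration} is taken as given.
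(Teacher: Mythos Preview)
Your approach is essentially the same as the paper's: both apply the twisting map $\Psi$ of \eqref{eq:twist-iteration} to the quadratic relations \eqref{eq:Mqn1}--\eqref{eq:Mqn2} via the explicit formula \eqref{eq:twist-quadratic}, matching the four families exactly as you describe. Your proposal is in fact more complete than the paper's sketch, since you supply an explicit completeness argument (via the PBW basis and the observation that $\Psi$ is the identity on the underlying module), whereas the paper simply defers this to the literature references \cite{JW} and \cite{DL}.
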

\begin{proof}
This is proved by twisting the quadratic relations from \Cref{defn:OLqMn} by the twisting map of Equation \eqref{eq:twist-quadratic}.
The relations from \Cref{defn:OLqMn} to be twisted are: 
\begin{gather}
x^i_jx^i_l=q^{-1}x^i_lx^i_j\qquad (\forall 1\leq j<l\leq n),\label{eq:reltype1}\\
x^i_jx^k_j=q^{-1}x^k_jx^i_j\qquad (\forall 1\leq i<k\leq n)\label{eq:reltype2}\\
x^i_lx^k_j=x^k_jx^i_l\qquad (\forall 1\leq i<k\leq n,\quad 1\leq j<l\leq n )\label{eq:reltype3}\\ 
x^i_jx^k_l-x^k_lx^i_j=(q^{-1}-q)x^i_lx^k_j \qquad (\forall 1\leq i<k\leq n,\quad 1\leq j<l\leq n ),\label{eq:reltype4}
\end{gather}
where $i,j,k,l$ are pairwise distinct.
Using this formula, relations \eqref{eq:reltype1}--\eqref{eq:reltype3} twist to give relations \eqref{eq:BqMn-rel-1}--\eqref{eq:BqMn-rel-3}, respectively.  Twisting relation \eqref{eq:reltype4} gives an equation that simplifies to \eqref{eq:BqMn-rel-4}. 
\end{proof}

\begin{rmk}
Note that the dual quasi-triangular bialgebra $O_q(M_n)$ is \emph{not} a Hopf algebra. However, the covariantized algebra $B_q(M_n)$ is still well-defined, and commonly found in the literature, as the product formula only involves the antipode before applying the dual R-matrix. More precisely, one only needs that $\cR$ has a convolution inverse in the algebra $\Hom_\Bbbk (A\otimes A^{\cop}, \Bbbk)$ for the bialgebra $A$. If the antipode exists, this is given by $\cR(\Id\otimes S)$ but the convolution inverse might exist more generally, and the covariantized algebra $\un{A}$ is well defined. In the case of $A=O_q(M_n)$, which is denoted by $A(R)$ in \cite{Majid95book}, this is the algebra $B(R)$.
\end{rmk}

\begin{rmk}\label{rmk:compare-JW}
We can compare the relations \eqref{eq:BqMn-rel-1}--\eqref{eq:BqMn-rel-4} to \cite{JW}*{Equation (2.4)} by reversing the order of the index set $1,\ldots,n$. This is due to a difference in the form of the R-matrix between the one use in \cite{JW}*{Equation~(2.1)} which follows the conventions of \cite{KS} and \eqref{eq:R-matrix-sln} where we use the convention of \cite{Majid95book}. With this precise comparison to the conventions of \cite{JW}, we can apply their result \cite{JW}*{Corollary~6.4} giving a closed formula for the quantum determinant.
\end{rmk}

\begin{thm}[Jordan--White \cite{JW}]\label{thm:brdqn}
The quantum determinant $\dq{n}$ of \eqref{eq:qdet} corresponds to the element 
\begin{align}
\brdq{n}=\sum_{\sigma \in S_n} (-q)^{l(\sigma)}q^{e(\sigma)}u^{n}_{\sigma(n)}\br \ldots \br u^1_{\sigma(1)} \quad\in\quad B^\intf_q(M_n),\label{eq:brdq-n}
\end{align}
where $e(\sigma)=|\Set{i=1,\ldots, n~|~ \sigma(i)>i}|$.
\end{thm}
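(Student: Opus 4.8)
The plan is to deduce this identity by transporting \cite{JW}*{Corollary~6.4} through the dictionary of conventions set up in Remark~\ref{rmk:compare-JW}, rather than by expanding $\Psi(\dq{n})$ from scratch. Here ``corresponds'' means that $\dq{n}\in O^\intf_q(M_n)$, which is grouplike and central by Lemma~\ref{lem:dqn}, re-expressed in $B^\intf_q(M_n)$ in terms of the braided product $\br$ via the twisting map \eqref{eq:twist-iteration}, equals $\brdq{n}$. Jordan and White compute the centre of the reflection equation algebra at generic $q$ and, in doing so, produce the quantum determinant as a distinguished central element together with a closed formula for it, written in the R-matrix conventions of \cite{KS}. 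Since \eqref{eq:R-matrix-sln} follows the conventions of \cite{Majid95book} instead, the first task is to make the comparison between the two presentations completely explicit at the level of generators and relations.

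Concretely, I would introduce the order-reversing involution $\bar\imath=n+1-i$ on the index set and the induced assignment $u^i_j\mapsto u^{\bar\imath}_{\bar\jmath}$. The key step is to check that this extends to an isomorphism of $\mI$-algebras between $B^\intf_q(M_n)$ and the reflection equation algebra of \cite{JW}: on the level of R-matrices, conjugating \eqref{eq:R-matrix-sln} by the reversal permutation interchanges the condition $\delta_{j>l}$ with $\delta_{\bar\jmath<\bar\imath}$ and thereby carries it to the R-matrix of \cite{KS}, while the overall scalar $q^{-1/n}$ cancels in the covariantized product \eqref{eq:twist-iteration} (it enters through $\cR^{-1}$ and $\cR$ with opposite exponents) and so does not affect the relations of $M_n$. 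Equivalently, one verifies that $u^i_j\mapsto u^{\bar\imath}_{\bar\jmath}$ sends the relations \eqref{eq:BqMn-rel-1}--\eqref{eq:BqMn-rel-4} to \cite{JW}*{Equation~(2.4)}; this is exactly the comparison asserted in Remark~\ref{rmk:compare-JW}, and it identifies the central element $\dq{n}$ with the central quantum determinant of \cite{JW}.

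It then remains to transport the closed formula along this isomorphism and confirm that it takes the form \eqref{eq:brdq-n}. Under the reversal, the monomial $x^1_{\sigma(1)}\cdots x^n_{\sigma(n)}$ is reindexed by $i\mapsto\bar\imath$, so the summand indexed by $\sigma$ is sent to the summand indexed by $w_0\sigma w_0$ (with $w_0$ the longest element of $S_n$) and the order of the braided product is reversed, producing the factors $u^n_{\sigma(n)}\br\cdots\br u^1_{\sigma(1)}$ appearing in \eqref{eq:brdq-n}. I expect the only genuinely nontrivial part of the argument to be the combinatorial bookkeeping of the two permutation statistics through this substitution: one uses that the number of inversions $l(\sigma)$ is invariant under conjugation by $w_0$, so that the sign $(-q)^{l(\sigma)}$ is reproduced, while the excedance weight must be matched by means of the identity $|\{i:\sigma(i)>i\}|+|\{i:\sigma(i)<i\}|+\mathrm{fix}(\sigma)=n$ together with the reversal of the $q$-powers coming from the diagonal part of the R-matrix, so as to recover precisely $q^{e(\sigma)}$ with $e(\sigma)=|\{i:\sigma(i)>i\}|$. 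As an independent sanity check one can expand $\Psi(\dq{n})$ directly for small $n$ using the quadratic twist \eqref{eq:twist-quadratic} and the relations of Lemma~\ref{lem:BqMn-relations}, verifying agreement with \eqref{eq:brdq-n}.
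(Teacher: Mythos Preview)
Your proposal is correct and matches the paper's approach: the paper does not give an independent proof of this theorem but attributes it to \cite{JW}*{Corollary~6.4}, with Remark~\ref{rmk:compare-JW} indicating precisely the index-reversal dictionary you describe for translating between the conventions of \cite{JW} (following \cite{KS}) and those used here (following \cite{Majid95book}). Your write-up simply makes this translation more explicit than the paper does.
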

Note that $\brdq{n}$ is in the center of $B^\intf_q(M_n)$. Hence, a formally adjoined inverse will also be in the center. Thus, we derive the following complete presentations for the algebras $B^\intf_q(GL_n)$ and $B^\intf_q(SL_n)$.

\begin{cor}\label{cor:presBLq-SL-GL}
We have isomorphisms of $\mI$-algebras 
\begin{align*}
B^\intf_q(GL_n)&\cong B^\intf_q(M_n)[t]/\left(\brdq{n}t-1\right), \\ B^\intf_q(SL_n)&\cong B^\intf_q(M_n)/\left(\brdq{n}-1\right).
\end{align*}
\end{cor}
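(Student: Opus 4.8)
The plan is to deduce both isomorphisms from the functoriality of the covariantization functor $\un{(-)}$ of \Cref{defn:cov-alg}, together with the behaviour of central grouplike elements under the braided product. The two essential inputs are \Cref{thm:brdqn}, which identifies the braided determinant $\brdq{n}$ with the twist $\Psi(\dq{n})$ of the quantum determinant, and the remark that $\brdq{n}$ is central in $B^\intf_q(M_n)$. First I would record two elementary facts. Covariantization is functorial: a morphism $f$ of dual-quasitriangular bialgebras over $\mI$ intertwining the dual R-matrices induces an algebra morphism $\un f$ with the \emph{same} underlying $\mI$-linear map, since the braided product \eqref{eq:brproduct} is built only from the coproduct and the convolution-invertible pairing $\cR$; moreover the twisting map $\Psi$ of \eqref{eq:twistingmap} is natural for such $f$. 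Secondly, for any grouplike $g$ one has $g\br b=gb$, i.e.\ left braided multiplication by a grouplike equals ordinary left multiplication: this follows from the recovery formula \eqref{eq:brproduct2} using $\Delta(g)=g\otimes g$ and the fact that $\cR(g\otimes-)$ is convolution invertible (no antipode required). Finally, since \eqref{eq:twist-iteration} is exactly the recovery formula \eqref{eq:brproduct2}, an induction shows $\Psi$ is the identity on the common underlying module; in particular $\brdq{n}$ and $\dq{n}$ denote the \emph{same} element of $B^\intf_q(M_n)$, which by \Cref{lem:dqn} and \Cref{thm:brdqn} is grouplike and braided-central.

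For $B^\intf_q(SL_n)$ I would apply functoriality to the quotient map $\pi\colon O^\intf_q(M_n)\to O^\intf_q(M_n)/(\dq{n}-1)=O^\intf_q(SL_n)$ of \Cref{defn:OqLGLn-SLn}. As $\dq{n}-1$ is a central grouplike minus the unit, $(\dq{n}-1)$ is a coideal and an ideal and $\pi$ respects $\cR$, so $\un\pi\colon B^\intf_q(M_n)\to B^\intf_q(SL_n)$ is a surjective algebra map whose underlying linear map is $\pi$; hence $\ker\un\pi=\ker\pi=(\dq{n}-1)O^\intf_q(M_n)$ as a submodule. Using $\brdq{n}=\dq{n}$ grouplike central and the identity $g\br b=gb$, the two-sided braided ideal generated by $\brdq{n}-1$ is $\{(\brdq{n}-1)\br b\}=\{\brdq{n}\,b-b\}=(\dq{n}-1)O^\intf_q(M_n)$, which is precisely $\ker\un\pi$. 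Thus $\un\pi$ descends to the claimed isomorphism $B^\intf_q(M_n)/(\brdq{n}-1)\xrightarrow{\ \sim\ }B^\intf_q(SL_n)$.

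For $B^\intf_q(GL_n)$ I would use the localization inclusion $O^\intf_q(M_n)\hookrightarrow O^\intf_q(M_n)[\dq{n}^{-1}]=O^\intf_q(GL_n)$, a morphism of dual-quasitriangular Hopf algebras, giving an algebra map $\iota\colon B^\intf_q(M_n)\to B^\intf_q(GL_n)$. In $O^\intf_q(GL_n)$ the grouplike $\dq{n}$ is ordinary-invertible, so by $g\br b=gb$ (and its right-handed version, available since $\brdq{n}$ is braided-central) both braided multiplications by $\brdq{n}=\dq{n}$ agree with the invertible ordinary ones; hence $\iota(\brdq{n})$ is a central unit and $\iota$ extends to a map $B^\intf_q(M_n)[t]/(\brdq{n}t-1)\to B^\intf_q(GL_n)$. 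Because multiplication by the grouplike $\brdq{n}$ coincides for the two products, the Ore localization at $\brdq{n}$ has the same underlying module as $O^\intf_q(M_n)[\dq{n}^{-1}]$, so this extension is the identity on the common module and therefore an isomorphism. Combined with \Cref{lem:BqMn-relations}, this yields the full presentation.

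The main obstacle is reconciling data defined by the braided product with data defined by the ordinary product: a priori the ideal generated by $\brdq{n}-1$ and the localization at $\brdq{n}$ live in the braided algebra $B^\intf_q(M_n)$, whereas $\ker\pi$ and $O^\intf_q(GL_n)$ are built from the ordinary product of $O^\intf_q(M_n)$. The grouplike identity $g\br b=gb$ is exactly what bridges the two, and verifying it carefully (together with $\brdq{n}=\dq{n}$, via the coincidence of \eqref{eq:twist-iteration} and \eqref{eq:brproduct2}) is the crux; surjectivity of the induced maps is automatic from functoriality, and injectivity follows from the identification of underlying modules, or alternatively from comparing the known $\mI$-bases. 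All statements are compatible with the base changes $\mI\to\Bbbk,\mC$, since covariantization commutes with extension of scalars.
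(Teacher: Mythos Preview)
Your argument is correct and rests on the same observation the paper uses---that $\dq{n}$ is grouplike and central, so the twisting map trivializes on it---but you extract more from this than the paper does. The paper's proof only computes $\Psi(dt)=\cR^{-1}(d,t)\cR(d,t)\,\Psi(d)\br\Psi(t)=\Psi(d)\br\Psi(t)$, i.e.\ it checks that the claimed relation holds in the target algebra, and then implicitly invokes that $\Psi$ is a linear isomorphism to conclude. You instead prove the sharper identity $g\br b=gb$ for grouplike $g$ and \emph{arbitrary} $b$ (not just grouplike $b$), and note that $\Psi$ is literally the identity on the common underlying module; these two facts let you identify the braided ideal $(\brdq{n}-1)$ with the ordinary ideal $(\dq{n}-1)$, and the braided Ore localization with the ordinary one, directly at the level of submodules. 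This is a cleaner and more complete justification of both the surjectivity and injectivity of the claimed maps, whereas the paper's version leaves the injectivity direction to the reader.
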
 
\begin{proof}
The quantum determinant $d=\dq{n}$ is a central grouplike element, see Lemma~\ref{lem:dqn}, and so is its inverse $t$ in $O^\intf_q(GL_n)$. We denote the corresponding elements in $B^\intf_q(GL_n)$ by $\Psi(d)$ and $\Psi(t)$. Thus, when twisting we find that 
$$1=\Psi(dt)=\cR^{-1}(d,t)\cR(d,t)\Psi(d)\br \Psi(t)=\Psi(d)\br \Psi(t).$$
Thus, in $B^\intf_q(GL_n)$, the elements $\Psi(d)$ and $\Psi(t)$ are mutual inverses (even though $\Psi$ is not a morphism of algebras). Further, this shows that in $B^\intf_q(SL_n)$, the relation $\un{d}=1$ holds. 
\end{proof}

\begin{rmk}
Denote $G_n=M_n,GL_n$, or $SL_n$. Corollary \ref{cor:presBLq-SL-GL} directly gives presentations for the $\mK$-algebras algebras $B_q(G_n)$ or the $\mC$-algebras algebras $B_\epsilon(G_n):=B_q^\intf(G_n)\otimes_\mI \mC$, where $\epsilon$ is an odd root of unity in $\mC$, by extension of scalars along the map $\mI\to \mC$, $q\mapsto \epsilon$. 
\end{rmk}

\begin{ex}
Consider the case of $n=2$. Directly twisting the quantum determinant gives
\begin{align*}
\Psi(x^1_1x^2_2-q^{-1}x^1_2x^2_1)=&\;u^1_1\br u^2_2-(q^2-1)u^2_1\br u^1_2\\
&-u^1_2\br u^2_1-\left(1-q^{-2}\right)u^1_1\br u^1_1+\left(1-q^{-2}\right)u^2_2\br u^1_1\\
=&\;u^1_1\br u^2_2-(q^2-1)u^2_1\br u^1_2-u^2_1\br u^1_2\\
=&\;u^1_1\br u^2_2-q^{2}u^2_1\br u^1_2\\
=&\;u^2_2\br u^1_1-q^{2}u^2_1\br u^1_2\\
=&\;\brdq{2},
\end{align*}
using the relations
$$
u^2_1\br u^1_2-u^1_2\br u^2_1=\left(1-q^{-2}\right)u^1_1\br u^1_1-\left(1-q^{-2}\right)u^2_2\br u^1_1, 
$$
obtained from \eqref{eq:BqMn-rel-3}, and $u^1_1\br u^2_2=u^2_2\br u^1_1$ obtained from \eqref{eq:BqMn-rel-4}. Now, specializing all the relations from Lemma \ref{lem:BqMn-relations} to the case $n=2$, with $a=u^1_1$, $b=u^1_2$, $c=u^2_1$, and $d=u^2_2$, exactly recovers the presentation of $B_q(M_2)$ given in Example~\ref{eq:BqMn-GL2-SL2}, extending scalars to $\Bbbk$. Moreover, the quantum determinant is exactly the one used there and in \cite{Majid95book}*{Example~4.3.4} and we recover the resulting presentations for $B_q(GL_2)$ and $B_q(SL_2)$.
\end{ex}

\begin{ex}\label{ex:BqSL2}
In the case when $n=3$, the quantum determinant is given by 
\begin{align}\label{eq:brdq3}
\begin{split}
\brdq{3}=&\;u^3_3\br u^2_2 \br u^1_1- q^{2}u^3_3\br u^2_1 \br u^1_2- q^{2}u^3_1\br u^2_2 \br u^1_3\\
&- q^{2}u^3_2\br u^2_3 \br u^1_1+ q^{3}u^3_2\br u^2_1 \br u^1_3+ q^{4}u^3_1\br u^2_3 \br u^1_2.
\end{split}
\end{align}
Adding the relation $\brdq{3}=1$ to the algebra generated by the $u^i_j$, $1\leq i,j\leq 3$, subject to relations \eqref{eq:BqMn-rel-1}--\eqref{eq:BqMn-rel-4} gives a presentation for $B^\intf_q(SL_3)$.
\end{ex}

%

\subsection{Small reflection equation algebra at a root of unity}

This section contains Theorem~\ref{thm:main-theorem}, the main result of this paper. 
For this, let $\epsilon\in \mC$ denote a primitive $\ell$-th root of unity for $\ell$ odd. 

\begin{defn}[small reflection equation algebras]
We define the \emph{small reflection equation algebras} of type $GL_n$ and $SL_n$ to be 
$$b_\epsilon(GL_n):=\un{o_\epsilon(GL_n)} \quad \text{and}\quad b_\epsilon(SL_n):=\un{o_\epsilon(SL_n)},$$
using the covariantized algebra construction of Definition \ref{defn:cov-alg} and the small quantum function algebras from Definitions~\ref{pres-oqgln} and \ref{pres-oqsln}.

We may also define integral forms over the cylotomic integers $\mO=\mZ[\nu]$ as
$$b_\nu^\intf(GL_n):=\un{o_\nu^\intf(GL_n)} \quad \text{and}\quad b_\nu^\intf(SL_n):=\un{o_\nu^\intf(SL_n)}.$$
\end{defn}

Before giving presentations for the small reflection equation algebras which will display $b_\epsilon(GL_n)$ as a quotient of $B_\epsilon(GL_n)$ and $b_\epsilon(SL_n)$ as a quotient of $B_\epsilon(SL_n)$, we introduce some combinatorial notation.

\begin{notation}[Compositions]\label{notation:comp}
We let $\lambda\models N$ denotes a \emph{composition} of \emph{weight} $N$, i.e., a sequence $\lambda=(\lambda_1,\ldots, \lambda_k)\in \mZ_{\geq 1}^k$ of positive integers such that $\sum_{i=1}^k \lambda_i=N$. We say that the $\lambda_j$ are the \emph{parts} of $\lambda$ and their number, $k=|\lambda|$, is the \emph{length} of $\lambda$.

We also use the notation $\lambda_{-i}:=\lambda_{k+1-i}$ to denote the $i$-th last part of $\lambda$.
Moreover, we let $\lambda_{[1, -i]}$ denote the truncated composition $(\lambda_1, \dots, \lambda_{k+1-i})$ which deletes the final $i-1$ parts.
\end{notation}
We recall that each integer $n\geq 1$ has $2^{n-1}$ distinct compositions.

\begin{defn}[$\scalar_q(\lambda)$, $V^k(\lambda)$]  \label{def:k-extension}
Fix a composition $\lambda\models N$.
\begin{enumerate}
\item[(1)] For a generic parameter $q$, we define the \emph{$q$-scalar} associated to a composition $\lambda\models N$ to be
    \begin{align}
        \scalar_q(\lambda)=\scalar_q\left(\lambda\models N\right) = \frac{\prod_{j=1}^{N-1} \left( 1 - q^{-2(N-j)} \right)}{ \prod_{k=1}^{|\lambda|-1} \left( 1 - q^{-2 \left( N  - \sum_{j=1}^{k} \lambda_{-j} \right)} \right) }.
    \end{align}      
\item[(2)] For an integer $k \in \{1, \dots, n\}$, let $V^k(\lambda)=V^k(\lambda\models N)$ be the set of tuples $\bm{\beta}=(\beta_1,\ldots, \beta_{N+1})$ of length $N+1$ such that $\beta_{\sum_{i=1}^x \lambda_i + 1} = k$ for each $0 \leq x \leq |\lambda|$ and the remaining components $\beta_j$ are arbitrary integers $1\leq \beta_j<k$.
\end{enumerate}
\end{defn}
In other words, to specify an element of $V^k(\lambda)$, there are $|\lambda|+1$ determined components which equal $k$ while the remaining $N-|\lambda|$ components are freely chosen integers from $\{1, \ldots, k-1\}$. Thus, given a fixed positive integer $N$ and a composition $\lambda\models N$, the set $V^k(\lambda\models N)$ contains $(k-1)^{N-|\lambda|}$ distinct elements.

\begin{ex}
    The composition $\lambda = (3, 1, 2)\models 6$  has length $3$. Its $q$-scalar is 
\begin{align*}
\scalar_q(3, 1, 2)&=\frac{\left(1-q^{-2}\right)\left(1-q^{-4}\right)\ldots \left(1-q^{-10}\right)}{\left(1-q^{-8}\right)(1-q^{-6)})}=\left(1-q^{-2}\right)\left(1-q^{-4}\right)\left(1-q^{-10}\right)\\&=\left(1-q^{-10}\right)\scalar_q(3, 1), \qquad \text{where}\\
\scalar_q(3, 1)&=\frac{\left(1-q^{-2}\right)\left(1-q^{-4}\right)(1-q^{-6)})}{\left(1-q^{-6}\right)}=\left(1-q^{-2}\right)\left(1-q^{-4}\right)=\scalar_q(3).
\end{align*}
An example one of the $3^3$ elements of the set $V^4(3, 1, 2)$ is 
$(4, 2, 3, 4, 4, 1, 4)$.
\end{ex}

\begin{lem}\label{lem:lambda-scalar}
The $q$-coefficient associated with a composition $\lambda\models N$ is an element of $\mI=\mZ[q,q^{-1}]$ and satisfies the recursion
$$ \scalar_q\left(\lambda\right)= \left(\prod_{j=1}^{\lambda_{-1}-1} \left( 1 - q^{-2(N-j)} \right) \right) \scalar_q \left( \lambda_{[1,-2]}\right),$$
for the composition $\lambda_{[1,-2]}=(\lambda_1,\ldots, \lambda_{-2})\models N-\lambda_{-1}$.
\end{lem}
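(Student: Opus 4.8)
The plan is to establish the recursion first as an identity of rational functions in $\mQ(q)$, and then deduce the integrality claim $\scalar_q(\lambda)\in\mI$ by induction on the length $|\lambda|$ using that recursion. Throughout I write $m=|\lambda|$ and abbreviate the partial sums of the last parts by $S_k=\sum_{j=1}^k\lambda_{-j}$, so that $S_m=N$ and the defining denominator reads $\prod_{k=1}^{m-1}\bigl(1-q^{-2(N-S_k)}\bigr)$. Reindexing the numerator by $i=N-j$ rewrites it as $\prod_{i=1}^{N-1}(1-q^{-2i})$, a form that makes the comparison with the truncation transparent.

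First I would relate the combinatorial data of $\lambda$ to that of $\lambda'=\lambda_{[1,-2]}$, which has weight $N'=N-\lambda_{-1}$ and length $m-1$. The key observation is that the last parts shift by one, i.e.\ $\lambda'_{-j}=\lambda_{-(j+1)}$, so that the partial sums satisfy $S'_k=S_{k+1}-\lambda_{-1}$ and hence $N'-S'_k=N-S_{k+1}$. Consequently the denominator of $\scalar_q(\lambda')$ equals exactly the factors of the denominator of $\scalar_q(\lambda)$ indexed by $k=2,\dots,m-1$, so the two denominators differ precisely by the single $k=1$ factor $1-q^{-2(N-\lambda_{-1})}$ (here $N-S_1=N-\lambda_{-1}=N'$). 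Comparing numerators in the reindexed form shows that the numerators of $\scalar_q(\lambda)$ and $\scalar_q(\lambda')$ differ by the block $\prod_{j=1}^{\lambda_{-1}}(1-q^{-2(N-j)})$.

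Taking the quotient $\scalar_q(\lambda)/\scalar_q(\lambda')$ then yields this numerator block divided by the extra denominator factor $1-q^{-2(N-\lambda_{-1})}$. The decisive point is that the $j=\lambda_{-1}$ term of the numerator block is exactly $1-q^{-2(N-\lambda_{-1})}$ and therefore cancels the extra denominator factor, leaving precisely $\prod_{j=1}^{\lambda_{-1}-1}(1-q^{-2(N-j)})$; this is the asserted recursion. I expect the only genuine obstacle to lie in the index bookkeeping of this step, namely correctly matching the shifted partial sums and pinpointing which factor cancels, rather than in any conceptual difficulty, since everything reduces to comparing two explicit finite products.

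Finally, integrality follows by induction on $m=|\lambda|$. For the base case $m=1$ the composition is $\lambda=(N)$, the denominator is the empty product, and $\scalar_q((N))=\prod_{i=1}^{N-1}(1-q^{-2i})\in\mI$. For the inductive step the recursion expresses $\scalar_q(\lambda)$ as the explicit Laurent polynomial $\prod_{j=1}^{\lambda_{-1}-1}(1-q^{-2(N-j)})$ times $\scalar_q(\lambda_{[1,-2]})$, which lies in $\mI$ by the inductive hypothesis as $\lambda_{[1,-2]}$ has length $m-1$; since $\mI$ is closed under multiplication, $\scalar_q(\lambda)\in\mI$ as well.
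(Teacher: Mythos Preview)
Your proposal is correct and follows essentially the same route as the paper: split the numerator of $\scalar_q(\lambda)$ into the block with $j\le\lambda_{-1}$ and the rest, peel off the $k=1$ factor $1-q^{-2(N-\lambda_{-1})}$ from the denominator, cancel it against the $j=\lambda_{-1}$ numerator term, and reindex to recognise $\scalar_q(\lambda_{[1,-2]})$; integrality then follows by induction on $|\lambda|$ with the one-part base case. Your reindexing via $i=N-j$ and the explicit partial-sum bookkeeping $S'_k=S_{k+1}-\lambda_{-1}$ make the comparison a bit more transparent, but the argument is the same.
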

\begin{proof}
  To prove the recursive formula, we compute
    \begin{align*}
        \scalar_q\left(\lambda\right) &= \frac{\prod_{j=1}^{N-1} \left( 1 - q^{-2(N-j)} \right)}{ \prod_{k=1}^{|\lambda|-1} \left( 1 - q^{-2 \left( N - \sum_{j=1}^{k} \lambda_{-j} \right)} \right) } \\
        &= \frac{\prod_{j=1}^{\lambda_{-1}} \left( 1 - q^{-2(N-j)} \right) \prod_{j=\lambda_{-1}+1}^{N-1} \left( 1 - q^{-2(N-j)} \right)}{\left( 1 - q^{-2(N- \lambda_{-1})} \right) \prod_{k=2}^{|\lambda|-1} \left( 1 - q^{-2 \left( N - \sum_{j=1}^{k} \lambda_{-j} \right)} \right) } \\
        &= \frac{\prod_{j=1}^{\lambda_{-1}-1} \left( 1 - q^{-2(N-j)} \right) \prod_{j=1}^{N-\lambda_{-1} -1} \left( 1 - q^{-2(N-\lambda_{-1}-j)} \right)}{\prod_{k=1}^{|\lambda|-2} \left( 1 - q^{-2 \left( N - \lambda_{-1} - \sum_{j=2}^{k+1} \lambda_{-j} \right)} \right) } \\
                &= \frac{\prod_{j=1}^{\lambda_{-1}-1} \left( 1 - q^{-2(N-j)} \right) \prod_{j=1}^{N-\lambda_{-1} -1} \left( 1 - q^{-2(N-\lambda_{-1}-j)} \right)}{\prod_{k=1}^{|\lambda|-2} \left( 1 - q^{-2 \left( N - \lambda_{-1} - \sum_{j=1}^{k} \lambda_{-j-1} \right)} \right) } \\
        &= \left(\prod_{j=1}^{\lambda_{-1}-1} \left( 1 - q^{-2(N-j)} \right) \right) \scalar_q \left( \lambda_{[1,-2]}\right).
    \end{align*}
    
Finally, the claim that $\scalar_q(\lambda)\in \mI=\mZ[q,q^{-1}]$ now follows by induction on the length $|\lambda|$ by the formula just proved and that for a one-part composition $(N)\models N$, we have 
\[
\scalar_q(N)= \prod_{j=1}^{N-1}\left(1-q^{-2(N-j)}\right),
\]
with no denominator, since $\lambda_{-1}=N$ is the only part in this composition, i.e., $|\lambda|=1$.
\end{proof}

The above lemma means that the $q$-scalar $\scalar_q(\lambda)$ can be specialized to a root of unity $\epsilon$ or the generator $\nu$ of $\mO$ as $\scalar_\epsilon(\lambda)$ and $\scalar_\nu(\lambda)$ are well-defined.

\begin{thm}\label{thm:main-theorem}
The small reflection equation algebra $b_\epsilon(GL_n)$ is generated by $u^k_l$ for all $1\leq l,k\leq n$ subject to the relations \eqref{eq:BqMn-rel-1}--\eqref{eq:BqMn-rel-4} as well as the additional relations 
\begin{align}
\left(u^k_l\right)^{\br \ell}&=0,\label{eq:oepsilonGLn-rel1}\\
\sum_{\lambda\models \ell} \scalar_\epsilon (\lambda) \sum_{\bm{\beta}\in V^k(\lambda)} u^{\beta_1}_{\beta_2}\br \ldots \br u^{\beta_\ell}_{\beta_{\ell+1}}&=1,\label{eq:oepsilonGLn-rel2}
\end{align}
for all $1\leq k\neq l\leq n$, where the sum is taken over all compositions $\lambda$ of $\ell$ and all $\bm{\beta}=(\beta_1,\ldots, \beta_{\ell+1})\in V^k(\lambda)$. 

Moreover, the algebra $b_\epsilon(SL_n)$ is the quotient of $b_\epsilon(GL_n)$ by the additional relation that
\begin{align}
\brdeps{n}=\sum_{\sigma \in S_n} (-\epsilon)^{l(\sigma)}\epsilon^{e(\sigma)}u^{n}_{\sigma(n)}\br \ldots \br u^1_{\sigma(1)}&=1,\label{eq:oepsilonSLn-rel}
\end{align}
where $l(\sigma)$ is the number of inversions in $\sigma$ and $e(\sigma)=|\Set{i=1,\ldots, n~|~ \sigma(i)>i}|$.
\end{thm}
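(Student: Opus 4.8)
The plan is to transport the known presentation of $o_\epsilon(GL_n)$ across the twisting map $\Psi$ of \eqref{eq:twistingmap}. Recall from \Cref{lem:Tak-OGLn-quotient} that $o_\epsilon(GL_n)$ is the quotient of $O_\epsilon(M_n)$ by the ideal $J$ generated by the relations \eqref{eq:rel-small-o}, and that $\Psi\colon o_\epsilon(GL_n)\to b_\epsilon(GL_n)=\un{o_\epsilon(GL_n)}$ is a linear isomorphism with $\Psi(1)=1$ and $\Psi(x^i_j)=u^i_j$. First I would observe that $\Psi$ transports presentations: since the ordinary product is recovered from the braided one by the invertible, R-matrix-dressed formula \eqref{eq:brproduct2}, the free algebra on the symbols $x^i_j$ (ordinary product) and the free algebra on $u^i_j$ (braided product) are identified by $\Psi$, and this identification carries the ordinary two-sided ideal generated by a set $S$ onto the braided two-sided ideal generated by $\Psi(S)$. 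Consequently $b_\epsilon(GL_n)$ is presented by the $u^i_j$ subject to the twists of all defining relations of $o_\epsilon(GL_n)$. The twists of the quadratic relations are already identified with \eqref{eq:BqMn-rel-1}--\eqref{eq:BqMn-rel-4} in \Cref{lem:BqMn-relations}, so it remains only to twist the two families in \eqref{eq:rel-small-o}.

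For the nilpotency relations I would use a grading argument. The map $\Psi$ is the identity on underlying modules, hence preserves the total degree in the generators as well as the adjoint ($u_\epsilon(\mathfrak{gl}_n)$-)weight, under which $u^i_j$ carries weight $e_j-e_i\in\mZ^n$; moreover the braided monomials are homogeneous for both gradings. In degree $\ell$ and adjoint weight $\ell(e_l-e_k)$ with $k\neq l$, a braided monomial $u^{a_1}_{b_1}\br\cdots\br u^{a_\ell}_{b_\ell}$ must satisfy $\sum_s e_{b_s}-\sum_s e_{a_s}=\ell(e_l-e_k)$ subject to $\sum_s e_{a_s}=\sum_s e_{b_s}=\ell$, which forces every $a_s=k$ and every $b_s=l$. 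Thus this graded piece is one-dimensional, spanned by $(u^k_l)^{\br\ell}$; since $\Psi\big((x^k_l)^\ell\big)$ lies in it and $\Psi$ is injective, we get $\Psi\big((x^k_l)^\ell\big)=c\,(u^k_l)^{\br\ell}$ for a nonzero scalar $c$, yielding relation \eqref{eq:oepsilonGLn-rel1}.

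The substantive computation is the unipotency relation $\big(x^k_k\big)^\ell=1$, whose adjoint weight is zero and whose graded piece is far from one-dimensional. Here I would establish a recursion for $\Psi\big((x^k_k)^m\big)$ by writing $(x^k_k)^m=(x^k_k)^{m-1}x^k_k$ and applying \eqref{eq:twist-iteration} together with the iterated matrix coproduct $\Delta^{(2)}(x^k_k)=\sum_{p,r}x^k_p\otimes x^p_r\otimes x^r_k$ and the explicit dual R-matrix \eqref{eq:R-matrix-sln}. Each right multiplication by $x^k_k$ records an index path returning to $k$, contributing either by extending the current run or by opening a new part of a composition $\lambda\models m$; the accompanying R-matrix coefficients of the form $1-q^{-2(m-j)}$ accumulate exactly as in the recursion of \Cref{lem:lambda-scalar} for $\scalar_q(\lambda)$, while the sets $V^k(\lambda)$ of \Cref{def:k-extension} bookkeep the positions at which the path is pinned to the index $k$. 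Matching this bookkeeping identifies $\Psi\big((x^k_k)^\ell\big)$ with the sum in \eqref{eq:oepsilonGLn-rel2}; together with $\Psi(1)=1$ and the specialization $q\mapsto\epsilon$ this gives the unipotency relation. I expect organizing the zero-weight monomials produced by the noncocommutative coproduct of $x^k_k$ into the composition combinatorics of $\scalar_\epsilon(\lambda)$ and $V^k(\lambda)$ to be the principal difficulty.

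Finally, for $b_\epsilon(SL_n)$ I would follow \Cref{cor:presBLq-SL-GL} verbatim. By \Cref{pres-oqsln} we have $o_\epsilon(SL_n)=o_\epsilon(GL_n)/(\dnu{n}-1)$, and since $\dq{n}$ is central and grouplike in $O_\epsilon(M_n)$ by \Cref{lem:dqn}, its twist is the Jordan--White element $\brdq{n}$ of \Cref{thm:brdqn}; grouplikeness ensures the single relation $\dq{n}=1$ twists cleanly, so specializing $q\mapsto\epsilon$ gives the relation $\brdeps{n}=1$ of \eqref{eq:oepsilonSLn-rel}. Adjoining this to the $GL_n$ presentation completes the proof.
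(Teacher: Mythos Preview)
Your proposal is correct and follows the same overall strategy as the paper: transport the presentation of $o_\epsilon(GL_n)$ across the linear isomorphism $\Psi$, cite \Cref{lem:BqMn-relations} for the quadratic relations, twist the two families in \eqref{eq:rel-small-o} separately, and invoke Jordan--White (\Cref{thm:brdqn}) for the determinant in the $SL_n$ case. The one substantive difference is in the nilpotency step: you argue by degree and coadjoint weight that the graded piece containing $\Psi\big((x^k_l)^\ell\big)$ is one-dimensional, whereas the paper computes directly in \Cref{lem:u^k_l-powers} that $\Psi\big((x^k_l)^N\big)=q^{-N(N-1)/2}(u^k_l)^{\br N}$ by tracking which R-matrix entries in the expansion of \Cref{prop:braid_formula} can be nonzero. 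Your argument is cleaner and conceptual but does not produce the explicit scalar; the paper's is more computational but yields the precise constant. For the unipotency relation you correctly anticipate the recursive structure, which the paper carries out in detail via \Cref{prop: u^k_k recursion}, \Cref{lem:1}, and \Cref{prop:u^k_k-twisted}; your sketch of how the R-matrix factors $(1-q^{-2(m-j)})$ accumulate into $\scalar_q(\lambda)$ is exactly the mechanism formalized there.
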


\begin{proof}
Using Definition \ref{pres-oqgln} and Lemma~\ref{lem:Tak-OGLn-quotient}, we know that we need to twist the relations 
$$\left(x^k_l\right)^\ell=0, \qquad \text{and}\qquad \left(x^k_k\right)^\ell=1,$$
for all $1\leq k\neq l\leq n$. In particular, generators and relations associated with invertibility of the quantum determinant are not needed by Lemma~\ref{lem:Tak-OGLn-quotient}, see \cite{Tak}. 

To show that the twists of the relations in Equation \eqref{eq:small-o-rel} are Equation~\eqref{eq:oepsilonGLn-rel1}--\eqref{eq:oepsilonGLn-rel2}, 
we use results proved in the next subsection but with the generic parameter $q$ replaced with the root of unity $\epsilon$:
Equation~\eqref{eq:oepsilonGLn-rel1} follows from Proposition~\ref{lem:u^k_l-powers} and  Equation~\eqref{eq:oepsilonGLn-rel2} follows from Proposition~\ref{prop:u^k_k-twisted}. Passing to $b_\epsilon(SL_n)$ we twist the additional relation $\dq{n}=1$ in $o_\epsilon(SL_n)$ which, by Theorem~\ref{thm:brdqn} is given by Equation~\eqref{eq:oepsilonSLn-rel}.
\end{proof}

\begin{rmk}
We observe that for the composition $\lambda=(1,\ldots, 1)=\left(1^\ell\right)\models \ell$ we have 
$\scalar_\epsilon\left(1^\ell\right)=1$. Consider, say,  a degree lexicographic order on the monomial basis of $b_\epsilon(GL_n)$ where $u^k_l>u^a_b$ if $(k,l)>(a,b)$. Then the leading term of the relation \eqref{eq:oepsilonGLn-rel2} is always given by $\left(u^k_k\right)^{\br \ell}$ with coefficient $1$. All other terms involve degree $\ell$-monomials in generators $u^a_b$ with $a,b\leq k$. 

We further note that for $k=1$ the relation \eqref{eq:oepsilonGLn-rel2} takes an easy form. It simply becomes $\left(u^k_k\right)^{\br \ell}=1$ as there are no indices $\beta_i<1$. 
\end{rmk}

\begin{rmk}
Working with the same generators and relations (replacing $\epsilon$ by $\nu\in \mO$) give presentations for the integral forms $b_\nu^\intf(GL_n)$ and  $b_\nu^\intf(SL_n)$ as $\mO$-algebras from Theorem~\ref{thm:main-theorem}.
\end{rmk}

\begin{lem}
For any composition $\lambda \models N$, the set $V^2(\lambda)$ is a singleton. 
\end{lem}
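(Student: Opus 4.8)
The plan is to read the cardinality off directly from Definition~\ref{def:k-extension}, since for $k=2$ the constraint on the free entries collapses to a single allowed value. Recall that an element of $V^k(\lambda)$, for $\lambda\models N$, is a tuple $\bm{\beta}=(\beta_1,\ldots,\beta_{N+1})$ in which the $|\lambda|+1$ positions of the form $\sum_{i=1}^{x}\lambda_i+1$ (for $0\le x\le |\lambda|$) are pinned to the value $k$, while each of the remaining $N-|\lambda|$ entries ranges freely over $\{1,\ldots,k-1\}$. First I would note that the pinned positions are genuinely distinct and mutually consistent: because every part satisfies $\lambda_i\ge 1$, the partial sums $\sum_{i=1}^{x}\lambda_i$ are strictly increasing in $x$, so no position is pinned to two different values. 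This is exactly what underlies the count $|V^k(\lambda)|=(k-1)^{N-|\lambda|}$ recorded after the definition.

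Next I would specialize to $k=2$. Then the free entries are required to lie in $\{1,\ldots,k-1\}=\{1\}$, so each free entry is forced to equal $1$; there is no choice left to make. Consequently $|V^2(\lambda)|=(2-1)^{N-|\lambda|}=1^{N-|\lambda|}=1$, and the unique element is the tuple whose entries equal $2$ in the $|\lambda|+1$ pinned positions and equal $1$ in every other position. This exhibits the singleton explicitly rather than merely counting it.

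There is essentially no obstacle here; the only point warranting a moment's care is that the number of free entries $N-|\lambda|$ is nonnegative, which holds since $|\lambda|\le N$ (each part being at least $1$), so the exponent is well-defined. In the extremal case $\lambda=(1^N)$, where $|\lambda|=N$ and there are no free entries at all, the pinned positions $\sum_{i=1}^{x}\lambda_i+1=x+1$ for $0\le x\le N$ exhaust all of $\{1,\ldots,N+1\}$, and the single element is the constant tuple $(2,\ldots,2)$; the statement thus holds uniformly across all compositions of $N$.
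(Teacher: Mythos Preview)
Your proof is correct and follows essentially the same approach as the paper's: both arguments read off from Definition~\ref{def:k-extension} that for $k=2$ the free entries must lie in $\{1\}$, forcing a unique tuple, which the paper writes out explicitly. Your version is slightly more verbose, invoking the cardinality formula $(k-1)^{N-|\lambda|}$ and checking the edge case $\lambda=(1^N)$, but the core idea is identical.
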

\begin{proof}
From the definition we see that for $\bm{\beta}\in V^2(\lambda)$ we necessarily have that
$$ \bm{\beta}=\left(2,1^{\lambda_{1}},2,1^{\lambda_{2}},2,\ldots, 1^{\lambda_{-1}},2\right),$$
showing that $|V^2(\lambda)|=1$.
\end{proof}

Recall that presentations for $B_q(GL_2)$ and $B_q(SL_2)$ were given in Example~\ref{eq:BqMn-GL2-SL2}. We can now add the remaining relations to give a presentation for the finite-dimensional quotients.

\begin{cor}[$b_\epsilon(GL_2)$ and $b_\epsilon(SL_2)$]\label{cor:bepGL2-SL2}
Consider the case when $n=2$. 
\begin{enumerate}
\item
The algebra $b_\epsilon(GL_2)$ is generated by $a=u^1_1$, $b=u^1_2$, $c=u^2_1$, $d=u^2_2$ subject to the relations 
\eqref{eq:BqM2-rel1}--\eqref{eq:BqM2-rel3} and
\begin{align}
a^{\br \ell}&=1, &b^{\,\br \ell}&=0, &c^{\br \ell}&=0,\label{eq:bqGL2-rel1}
\end{align}
as well as the more involved relation
\begin{align}
\sum_{\lambda\models \ell} \scalar_\epsilon (\lambda) \monom(\lambda_{-1})\br \monom(\lambda_{-2})\br \ldots \br \monom(\lambda_1)&=1,\label{eq:bqGL2-rel2}
\end{align}
where for an integer $l\geq 1$,
$$\monom(l)=\begin{cases}
d, & \text{if $l=1$,}\\
c\br a^{\br (l-2)}\br b, & \text{if $l>1$.}
\end{cases}$$
\item The algebra $b_\epsilon(SL_2)$ is the quotient of $b_\epsilon(GL_2)$ by the additional relation 
\begin{align}\label{eq:brdet-1}
a\br d-\epsilon ^2c\br b&=1. 
\end{align}
\end{enumerate}
\end{cor}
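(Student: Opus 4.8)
The plan is to read off Theorem~\ref{thm:main-theorem} in the rank $n = 2$, writing $a = u^1_1$, $b = u^1_2$, $c = u^2_1$, $d = u^2_2$ as in \eqref{eq:abcd-def}, and to sort the relations into the four families of the statement. Three of these are immediate. The quadratic relations \eqref{eq:BqMn-rel-1}--\eqref{eq:BqMn-rel-4}, specialised to $n = 2$, are exactly the defining relations \eqref{eq:BqM2-rel1}--\eqref{eq:BqM2-rel3} of $B_q(M_2)$; this is the computation already recorded in the example following Lemma~\ref{lem:BqMn-relations} (cf.\ Example~\ref{eq:BqMn-GL2-SL2}), now read over $\mC$ with $q = \epsilon$. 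The nilpotency relations \eqref{eq:oepsilonGLn-rel1}, ranging over $1 \le k \ne l \le 2$, give precisely $b^{\br\ell} = 0$ and $c^{\br\ell} = 0$. Finally, since the left-hand side of \eqref{eq:oepsilonGLn-rel2} depends only on $k$, it contributes one relation for each $k \in \{1,2\}$; for $k = 1$ the degenerate case discussed in the remark following Theorem~\ref{thm:main-theorem} applies, the only surviving composition being $(1^\ell)$ with $\scalar_\epsilon(1^\ell) = 1$, so the relation collapses to $a^{\br\ell} = 1$. Together these yield \eqref{eq:bqGL2-rel1}.

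The content of part (1) is thus the relation \eqref{eq:oepsilonGLn-rel2} for $k = 2$. By the preceding lemma each set $V^2(\lambda)$ is a singleton, so the inner sum collapses and one is left with a single chained monomial $u^{\beta_1}_{\beta_2} \br \cdots \br u^{\beta_\ell}_{\beta_{\ell+1}}$ per composition $\lambda \models \ell$. I would read this monomial block by block along the unique tuple $\bm{\beta}$, cutting at the $|\lambda| + 1$ entries equal to $2$: a maximal run $2, 1^{\lambda_i - 1}, 2$ contributes $u^2_2 = d = \monom(1)$ when $\lambda_i = 1$ and $u^2_1 \br (u^1_1)^{\br(\lambda_i - 2)} \br u^1_2 = \monom(\lambda_i)$ when $\lambda_i > 1$. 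Hence the monomial factors as a braided product of the blocks $\monom(\lambda_i)$, and summing against $\scalar_\epsilon(\lambda)$ over all $\lambda \models \ell$ reproduces the combinatorial left-hand side of \eqref{eq:bqGL2-rel2}.

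The one delicate point, which I expect to be the main obstacle, is the order of the blocks: reading $\bm{\beta}$ from left to right groups them as $\monom(\lambda_1) \br \cdots \br \monom(\lambda_{|\lambda|})$, whereas \eqref{eq:bqGL2-rel2} lists them in the opposite order. One must therefore check that the two orderings define the same element of $b_\epsilon(GL_2)$. For this I would use the quadratic relations of $B_q(M_2)$: the block $d = \monom(1)$ commutes with every $\monom(l)$ (for instance $d \br c \br b = c \br b \br d$ is a one-line consequence of \eqref{eq:BqM2-rel2}), which already settles all $\ell \le 3$; in general the blocks $\monom(l)$ with $l > 1$ do not commute, so the equality of the two full sums is a genuine identity rather than a term-by-term rearrangement. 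The cleanest route I see is to exhibit an anti-automorphism of $b_\epsilon(GL_2)$ fixing each $\monom(l)$, so that it reverses each product $\monom(\lambda_1) \br \cdots \br \monom(\lambda_{|\lambda|})$ while fixing the scalars $\scalar_\epsilon(\lambda)$ and the unit; this would transport the forward relation $\sum_{\lambda \models \ell} \scalar_\epsilon(\lambda)\, \monom(\lambda_1)\br\cdots\br\monom(\lambda_{|\lambda|}) = 1$ obtained above to \eqref{eq:bqGL2-rel2}. Constructing and verifying such a symmetry on the generators (the naive swap $b \leftrightarrow c$ does not preserve \eqref{eq:BqM2-rel1}--\eqref{eq:BqM2-rel3} on the nose) is where the real work lies.

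For part (2), $b_\epsilon(SL_2)$ is cut out by \eqref{eq:oepsilonSLn-rel} at $n = 2$. The symmetric group $S_2 = \{\mathrm{id}, (1\,2)\}$ gives the two terms $d \br a$ (from $\mathrm{id}$, with $l = e = 0$) and $-\epsilon^2\, c \br b$ (from $(1\,2)$, with $l(\sigma) = e(\sigma) = 1$), so $\brdeps{2} = d \br a - \epsilon^2\, c \br b$; applying $a \br d = d \br a$ from \eqref{eq:BqM2-rel1} rewrites this as \eqref{eq:brdet-1}. This agrees with the specialisation of Theorem~\ref{thm:brdqn} and with the determinant recorded in Example~\ref{eq:BqMn-GL2-SL2}, completing the argument.
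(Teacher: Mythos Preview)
Your approach---specialise Theorem~\ref{thm:main-theorem} to $n=2$ and read off each family of relations---is exactly what the paper intends; the corollary is stated without proof as a direct consequence. Your treatment of the quadratic relations, the nilpotency relations, the $k=1$ case of \eqref{eq:oepsilonGLn-rel2}, and part~(2) is correct.

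You have also correctly spotted a genuine discrepancy: the unique $\bm\beta\in V^2(\lambda)$ read through the theorem yields $\monom(\lambda_1)\br\cdots\br\monom(\lambda_{|\lambda|})$ in \emph{forward} order, whereas the corollary (and the introduction) writes the blocks in reverse. Your instinct that this is the crux is right, but the resolution is not an anti-automorphism argument---it is a typo in the statement. The paper's own $\ell=5$ example is consistent only with the forward order: for instance the coefficient of $\monom(3)\br\monom(2)=c\br a\br b\br c\br b$ is recorded there as $(1-\epsilon^{-2})(1-\epsilon^{-4})(1-\epsilon^{-8})=\scalar_\epsilon(3,2)$, which is what the theorem gives for $\lambda=(3,2)$, not $\scalar_\epsilon(2,3)=(1-\epsilon^{-2})(1-\epsilon^{-6})(1-\epsilon^{-8})$ as the reversed formula \eqref{eq:bqGL2-rel2} would require. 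Since $\scalar_\epsilon(\lambda)\neq\scalar_\epsilon(\lambda^{\mathrm{rev}})$ in general and the blocks $\monom(l)$ with $l>1$ do not all commute, the two orderings are genuinely different expressions; the intended relation is the forward one, which follows immediately from the theorem with no further work. Your anti-automorphism idea would be the right kind of tool if the reversed formula were really meant, but the naive candidate fails (as you note) and no such symmetry is used anywhere in the paper.
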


\begin{ex}
We compute the more elaborate relation \eqref{eq:bqGL2-rel2} for cases of small values of $\ell$. For $\ell=3$ we have
\begin{align*}
1=d^{\br 3} + \left(1-\epsilon^{-2}\right)d\br c\br b + \left(1-\epsilon^{-4}\right)c\br b\br d + \left(1-\epsilon^{-2}\right)\left(1-\epsilon^{-4}\right)c\br a\br b.
\end{align*}
For $\ell=5$, we find
\begin{align*}
1&=d^{\br 5}+ \left(1-\epsilon^{-2}\right)d^{\br 3}\br c\br b + \left(1-\epsilon^{-4}\right)d^{\br 2}\br c\br b\br d  + \left(1-\epsilon^{-6}\right)d\br c\br b\br d^{\br 2} + \left(1-\epsilon^{-8}\right)c\br b\br d^{\br 3}\\
&+ \left(1-\epsilon^{-2}\right)\left(1-\epsilon^{-4}\right)d^{\br 2}\br c\br a\br b + \left(1-\epsilon^{-4}\right)\left(1-\epsilon^{-6}\right)d \br c\br a\br b \br d+ \left(1-\epsilon^{-6}\right)\left(1-\epsilon^{-8}\right)c\br a\br b\br d^{\br 2}\\
&+ \left(1-\epsilon^{-2}\right)\left(1-\epsilon^{-6}\right)(c\br b)^{\br 2}\br d+ \left(1-\epsilon^{-2}\right)\left(1-\epsilon^{-8}\right)c\br b\br d\br c\br b +\left(1-\epsilon^{-4}\right)\left(1-\epsilon^{-8}\right)d\br (c\br b)^{\br 2}\\
&+\left(1-\epsilon^{-2}\right)\left(1-\epsilon^{-4}\right)\left(1-\epsilon^{-8}\right)c\br a\br b\br c\br b+\left(1-\epsilon^{-2}\right)\left(1-\epsilon^{-6}\right)\left(1-\epsilon^{-8}\right)c\br  b\br c\br a\br b\\
&+\left(1-\epsilon^{-2}\right)\left(1-\epsilon^{-4}\right)\left(1-\epsilon^{-6}\right)d\br c\br a^{\br 2}\br b
+\left(1-\epsilon^{-4}\right)\left(1-\epsilon^{-6}\right)\left(1-\epsilon^{-8}\right)c\br a^{\br 2}\br b\br d\\
&+ \left(1-\epsilon^{-2}\right)\left(1-\epsilon^{-4}\right)\left(1-\epsilon^{-6}\right)\left(1-\epsilon^{-8}\right)c\br a^{\br 3}\br b.
\end{align*}
\end{ex}

\begin{ex}
More generally, consider the case when $n\geq 2$ and $\ell=3$. For $k\leq n$, the sets $V^k(\lambda)$ from Definition \ref{def:k-extension} are
\begin{gather*}
V^k(1,1,1)=\Set{(k,k,k,k)},\qquad V^k(1,2)=\Set{(k,k,i,k)~|~1\leq i< k}, \\ V^k(2,1)=\Set{(k,i,k,k)~|~1\leq i< k},\quad
V^k(3)=\Set{(k,i,j,k)~|~1\leq i,j< k}.
\end{gather*}
Thus, relation \eqref{eq:oepsilonGLn-rel2} specializes to 
\begin{align}
\begin{split}
1=\left(u^k_k\right)^{\br 3}&+\left(1-\epsilon^{-2}\right) \sum_{i<k} u^k_i \br u^i_k\br u^k_k + \left(1-\epsilon^{-4}\right)\sum_{i<k} u^k_k\br u^k_i\br u^i_k \\ &+ \left(1-\epsilon^{-2}\right)\left(1-\epsilon^{-4}\right)\sum_{i,j<k}u^k_i\br u^i_j\br u^j_k.
\end{split}
\end{align}
For instance, we have that
\begin{align*}
1=&\left(u^1_1\right)^{\br 3},\quad\\
1=&\left(u^2_2\right)^{\br 3}+\left(1-\epsilon^{-2}\right) u^2_1 \br u^1_2\br u^2_2 + \left(1-\epsilon^{-4}\right) u^2_2\br u^2_1\br u^1_2+ \left(1-\epsilon^{-2}\right)\left(1-\epsilon^{-4}\right)u^2_1\br u^1_1\br u^1_2.
\end{align*}
The number of monomials with non-zero coefficients in these relations is given by $1+(2^{\ell-1}-1)(k-1),$ which grows exponentially in $\ell$, the order of the root of unity.
\end{ex}


\subsection{Details of the proof of \texorpdfstring{Theorem~\ref{thm:main-theorem}}{Theorem 4.15}}

In the following subsection, we present a number of results leading up to Propositions \ref{lem:u^k_l-powers} and \ref{prop:u^k_k-twisted} which are used in the proof of this paper's main theorem, Theorem~\ref{thm:main-theorem}. Most generally, these statements are valid in the algebra $B^\intf_q(M_n)$ which is based on the dual R-matrix $\cR$ of Equation~\eqref{eq:R-matrix-sln} taken from \cite{Majid95book}*{Example~4.5.7}. We use indexed products 
$$\prod_{i=1}^j x^{k_i}_{l_i}=x^{k_1}_{l_1}\ldots x^{k_j}_{l_j} \qquad \text{or} \qquad \unprod_{i=1}^N u^{k_i}_{l_i}=u^{k_1}_{l_1}\br \ldots \br u^{k_j}_{l_j},$$
which are understood as ordered products in $O^\intf_q(M_n)$, respectively, $B^\intf_q(M_n)$ when using the symbol $\unprod$. We use the standard convention that empty sums are zero and empty products, in algebras, equal the unit element $1$.

\begin{prop} 
    \label{prop:braid_formula}
   For $i=1,\ldots, N+1$ and all $1\leq k_i,l_i\leq n$, we have the following equality in $B^\intf_q(M_n)$:
    \begin{align*}
       \Psi\left( \left( \prod_{i=1}^N x^{k_i}_{l_i} \right) x^{k_{N+1}}_{l_{N+1}}\right) =& \sum_{\alpha_i, \beta_i, \gamma_i, \delta_i} \left( \prod_{i=1}^N \cR \left(S \left(x^{k_{N-i+1}}_{\alpha_{N-i+1}}\right), x^{\gamma_i}_{\gamma_{i+1}} \right) \cR\left( x^{\beta_i}_{l_i}, x^{\delta_i}_{\delta_{i+1}} \right) \right) \\
       &\cdot \Psi\left(\left( \prod_{i=1}^N x^{\alpha_i}_{\beta_i} \right)\right) \br u^{\delta_{N+1}}_{l_{N+1}}\,,
    \end{align*}
where summation is taken over all $1\leq \alpha_{i},\gamma_i,\beta_i,\delta_i\leq n$ satisfying $\gamma_1 = k_{N+1}$ and $\gamma_{N+1} = \delta_1$.
\end{prop}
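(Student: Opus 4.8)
The plan is to apply the twisting identity \eqref{eq:twist-iteration} exactly once, taking $a=\prod_{i=1}^N x^{k_i}_{l_i}$ to be the whole product of the first $N$ generators and $b=x^{k_{N+1}}_{l_{N+1}}$ the last one, and then to expand the two scalar factors $\cR(S(a_{(1)}),b_{(1)})$ and $\cR(a_{(3)},b_{(2)})$ combinatorially. Since $\Psi$ is the identity on the underlying coalgebra, the second form of \eqref{eq:twist-iteration} holds for arbitrary $a,b\in A$ (it is the image under $\Psi$ of \eqref{eq:brproduct2}), so no genuine induction is needed; I would, however, isolate the auxiliary computation of $\cR$ on a product against a single generator as a short lemma and apply it twice.

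First I would compute the two iterated coproducts. Because $\Delta$ is an algebra homomorphism and $\Delta(x^i_j)=\sum_m x^i_m\otimes x^m_j$ by \eqref{eq:Mn-coalgebra}, the triple coproduct of $a$ factors into parallel chains,
\[
(\Delta\otimes\Id)\Delta(a)=\sum_{\alpha_i,\beta_i}\Big(\prod_{i=1}^N x^{k_i}_{\alpha_i}\Big)\otimes\Big(\prod_{i=1}^N x^{\alpha_i}_{\beta_i}\Big)\otimes\Big(\prod_{i=1}^N x^{\beta_i}_{l_i}\Big),
\]
which identifies $a_{(2)}=\prod_i x^{\alpha_i}_{\beta_i}$ and hence the factor $\Psi(a_{(2)})$ in the claim, while for the single generator $b$ one has $b_{(1)}\otimes b_{(2)}\otimes b_{(3)}=\sum_{s,t}x^{k_{N+1}}_s\otimes x^s_t\otimes x^t_{l_{N+1}}$.

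The key step is the expansion of the two $\cR$-factors using that $\cR$ is a dual R-form, $\cR(uv,w)=\cR(u,w_{(1)})\cR(v,w_{(2)})$, together with the fact that $S$ is an anti-algebra map, so that $S(a_{(1)})=S(x^{k_N}_{\alpha_N})\cdots S(x^{k_1}_{\alpha_1})$ with the order reversed. Applying the axiom iteratively pairs the $i$-th factor from the left against the $i$-th leg of the $(N-1)$-fold coproduct of the single second argument. Writing $\Delta^{(N-1)}(b_{(1)})=\sum x^{\gamma_1}_{\gamma_2}\otimes\cdots\otimes x^{\gamma_N}_{\gamma_{N+1}}$ with $\gamma_1=k_{N+1}$ and $\gamma_{N+1}=s$ gives $\cR(S(a_{(1)}),b_{(1)})=\sum\prod_{i=1}^N\cR(S(x^{k_{N-i+1}}_{\alpha_{N-i+1}}),x^{\gamma_i}_{\gamma_{i+1}})$, the reversed superscript $k_{N-i+1}$ being exactly the trace of the antipode's order reversal. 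In the same manner, expanding the in-order product $a_{(3)}=\prod_i x^{\beta_i}_{l_i}$ against $\Delta^{(N-1)}(b_{(2)})=\sum x^{\delta_1}_{\delta_2}\otimes\cdots\otimes x^{\delta_N}_{\delta_{N+1}}$ with $\delta_1=s$ and $\delta_{N+1}=t$ gives $\cR(a_{(3)},b_{(2)})=\sum\prod_{i=1}^N\cR(x^{\beta_i}_{l_i},x^{\delta_i}_{\delta_{i+1}})$, and the surviving summation over the internal index $s$ forces the identification $\gamma_{N+1}=\delta_1$, while $t=\delta_{N+1}$ leaves $\Psi(b_{(3)})=u^{\delta_{N+1}}_{l_{N+1}}$. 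Substituting all of this into \eqref{eq:twist-iteration} yields the asserted formula. I expect the only real difficulty to be the index bookkeeping --- in particular the order reversal in $S(a_{(1)})$ and the correct pinning of the boundary indices $\gamma_1=k_{N+1}$ and $\gamma_{N+1}=\delta_1$ --- rather than anything structural.
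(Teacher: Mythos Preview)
Your proposal is correct and follows essentially the same route as the paper: set $a=\prod_i x^{k_i}_{l_i}$, $b=x^{k_{N+1}}_{l_{N+1}}$, expand the triple coproducts, substitute into \eqref{eq:twist-iteration}, and then unravel the two $\cR$-factors using the dual R-matrix axiom $\cR(uv,w)=\cR(u,w_{(1)})\cR(v,w_{(2)})$ together with the anti-algebra property of $S$ for the first factor. The paper's proof is exactly this direct computation, and your identification of the order reversal and the boundary conditions $\gamma_1=k_{N+1}$, $\gamma_{N+1}=\delta_1$ matches it precisely.
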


\begin{proof}
    Let $a = \prod_{i=1}^N x^{k_i}_{l_i}$ and $b = x^{k_{N+1}}_{l_{N+1}}$.  To find the value of the twisting map, $\Psi(ab)$, in the notation of \eqref{eq:twistingmap}, we use the formula in Equation~\eqref{eq:twist-iteration}, and fix notation for the iterated coproduct of $a$ and $b$: 
    \[
        \sum b_{(1)} \otimes b_{(2)} \otimes b_{(3)}  =
        \sum_{\lambda, \mu} x^{k_{N+1}}_{\lambda} \otimes x^{\lambda}_{\mu} \otimes x^{\mu}_{l_{N+1}}
    \]
    and 
    \[ 
        \sum a_{(1)} \otimes a_{(2)} \otimes a_{(3)} 
        = \sum_{\alpha_i, \beta_i} \left( \prod_{i=1}^N x^{k_i}_{\alpha_i} \right) 
        \otimes \left(\prod_{i=1}^N x^{\alpha_i}_{\beta_i} \right) 
        \otimes \left(\prod_{i=1}^N x^{\beta_i}_{l_i}\right)
    \]
    where for the second equality we have used that the coproduct is a morphism of algebras. Substituting $a_{(i)}$ and $b_{(i)}$ into Equation~\eqref{eq:brproduct2} or Equation~\eqref{eq:twist-iteration} gives
    \begin{equation}
    \label{eq: ab inter}
    \begin{split}
    \Psi(a b) =& \sum_{\alpha_i,\beta_i,\lambda,\mu}\cR\left(S \left( \prod_{i=1}^N x^{k_i}_{\alpha_i} \right),  x^{k_{N+1}}_{\lambda}\right)  \cR \left(\left(\prod_{i=1}^N x^{\beta_i}_{l_i}\right), x^{\lambda}_{\mu} \right) \\
    &\cdot \Psi\left(\left(\prod_{i=1}^N x^{\alpha_i}_{\beta_i} \right)\right) \br u^{\mu}_{l_{N+1}}.
    \end{split}
    \end{equation}
Using the axioms of the dual R-matrix, specifically,
    \begin{equation}
        \label{dualR property}
        \cR(xy \otimes z) = \cR\left(x, z_{(1)}\right) \mathcal{R}\left(y, z_{(2)}\right), \end{equation}
we can simplify the middle factor of the right-hand side of \cref{eq: ab inter} to
\[\cR \left(\left(\prod_{i=1}^N x^{\beta_i}_{l_i}\right), x^{\lambda}_{\mu} \right) = \sum_{\delta_i}\prod_{i=1}^N \cR \left( x^{\beta_i}_{l_i}, x^{\delta_i}_{\delta_{i+1}} \right)\]
where $\lambda = \delta_1$ and $\mu = \delta_{N+1}$.
Finally, since the antipode is an anti-algebra map, we can use \cref{dualR property} once more to simplify the first factor in the right-hand side of \cref{eq: ab inter} to 
\begin{align*}
    \cR\left(S \left( \prod_{i=1}^N x^{k_i}_{\alpha_i} \right),  x^{k_{N+1}}_{\lambda}\right) =& \cR\left( \prod_{i=1}^N S \left( x^{k_{N-i+1}}_{\alpha_{N-i+1}} \right),  x^{k_{N+1}}_{\lambda}\right) \\
    =&\sum_{\gamma_i} \prod_i \cR\left(S \left( x^{k_{N-i+1}}_{\alpha_{N-i+1}} \right),  x^{\gamma_i}_{\gamma_i+1}\right)
\end{align*}
where $\gamma_1 = k_{N+1}$ and $\gamma_{N+1} = \lambda$. Combining these simplifications, we arrive at the result.
\end{proof}

\begin{prop}\label{lem:u^k_l-powers}
    If $k \neq l$ then, for all $N\geq 0$,
    \[\Psi\left(\left( x^k_l \right)^N\right) = q^{-\frac{1}{2}N(N-1)} \left( u^k_l \right)^{\br N}\] 
\end{prop}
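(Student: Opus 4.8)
The plan is to argue by induction on $N$, reducing the statement to the one-step recursion
\begin{equation}\label{eq:plan-recursion}
\Psi\left(\left(x^k_l\right)^{N+1}\right) = q^{-N}\,\Psi\left(\left(x^k_l\right)^{N}\right)\br u^k_l ,
\end{equation}
from which the closed formula follows at once, since $-\tfrac12 N(N-1) - N = -\tfrac12 N(N+1)$, while the base cases $N=0,1$ are immediate from $\Psi(1)=1$ and $\Psi(x^k_l)=u^k_l$. To establish \eqref{eq:plan-recursion} I would apply Proposition~\ref{prop:braid_formula} with $k_i=k$ and $l_i=l$ for all $i=1,\ldots,N+1$, viewing $\left(x^k_l\right)^{N+1}=\bigl(\prod_{i=1}^N x^k_l\bigr)\,x^k_l$. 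This writes $\Psi\bigl((x^k_l)^{N+1}\bigr)$ as a sum over indices $\alpha_i,\beta_i,\gamma_i,\delta_i$ of a product of dual $R$-matrix entries times $\Psi\bigl(\prod_{i=1}^N x^{\alpha_i}_{\beta_i}\bigr)\br u^{\delta_{N+1}}_l$, subject to the chain conditions $\gamma_1=k$ and $\gamma_{N+1}=\delta_1$. Using $\cR(S(-),-)=\cR^{-1}(-,-)$, the $\gamma$-factors are $(R^{-1})^{k\gamma_i}_{\alpha_{N-i+1}\gamma_{i+1}}$ and the $\delta$-factors are $R^{\beta_i\delta_i}_{l\delta_{i+1}}$, with $R$ and $R^{-1}$ as in \eqref{eq:R-matrix-sln}.

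The heart of the argument, and the main obstacle, is to show that the hypothesis $k\neq l$ forces this entire sum to collapse to the single diagonal term $\alpha_i=\gamma_i=\delta_i=k$, $\beta_i=l$. For the $\gamma$-chain, each entry of $R^{-1}$ is a sum of a diagonal summand (needing $\alpha_{N-i+1}=k$, $\gamma_{i+1}=\gamma_i$) and an off-diagonal summand (needing $\gamma_{i+1}=k$, $\gamma_i=\alpha_{N-i+1}$, $\alpha_{N-i+1}>\gamma_{i+1}$). Starting from $\gamma_1=k$, the off-diagonal summand would require $\gamma_i=\alpha_{N-i+1}>\gamma_{i+1}=k$ together with $\gamma_i=k$, i.e.\ $k>k$, which is impossible; so the diagonal summand is forced, propagating $\gamma_i=k$ and $\alpha_{N-i+1}=k$ along the whole chain, and in particular $\delta_1=\gamma_{N+1}=k$. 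For the $\delta$-chain, the off-diagonal summand of $R$ would require $\delta_i=l$, contradicting $\delta_i=k\neq l$; so again the diagonal summand is forced, giving $\delta_{i+1}=\delta_i=k$ and $\beta_i=l$ throughout, hence $\delta_{N+1}=k$. Thus the surviving term has $\Psi\bigl(\prod_{i=1}^N x^{\alpha_i}_{\beta_i}\bigr)=\Psi\bigl((x^k_l)^N\bigr)$ and final factor $u^{\delta_{N+1}}_l=u^k_l$.

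It then remains to compute the scalar. Each surviving $\gamma$-factor equals $(R^{-1})^{kk}_{kk}=q^{1/n-1}$, and each surviving $\delta$-factor equals $R^{lk}_{lk}=q^{-1/n}$, where $k\neq l$ annihilates both the would-be extra power $q^{\delta^l_k}$ and the off-diagonal summand. The $N$ factors of each type multiply to $q^{N(1/n-1)}\,q^{-N/n}=q^{-N}$, so the cumbersome $q^{\pm 1/n}$ normalizations cancel and \eqref{eq:plan-recursion} is proved. Feeding in the inductive hypothesis $\Psi\bigl((x^k_l)^N\bigr)=q^{-\frac12 N(N-1)}(u^k_l)^{\br N}$ and using $(u^k_l)^{\br N}\br u^k_l=(u^k_l)^{\br(N+1)}$ completes the induction. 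I expect the only delicate point to be the propagation of the diagonal/off-diagonal dichotomy along the two coupled chains; once the impossibility of the off-diagonal branches is recorded, the rest is routine.
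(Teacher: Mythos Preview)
Your proof is correct and follows essentially the same approach as the paper: apply Proposition~\ref{prop:braid_formula}, use the sparseness of $R$ and $R^{-1}$ to propagate the constraints $\gamma_i=k$, $\alpha_i=k$, $\delta_i=k$, $\beta_i=l$ along the two chains (exploiting $k\neq l$ to kill the off-diagonal branch in the $\delta$-chain), compute the surviving scalar to obtain the recursion $\Psi\bigl((x^k_l)^{N+1}\bigr)=q^{-N}\Psi\bigl((x^k_l)^N\bigr)\br u^k_l$, and induct. Your chain-propagation argument is in fact spelled out slightly more carefully than in the paper.
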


\begin{proof}
    To prove this identity, we begin by noting that the dual R-matrix $\cR$ is relatively sparse, with non-zero values only on the diagonal, i.e.,
$$\cR(x^i_i\otimes x^j_j)=R^{ij}_{ij}=q^{-\frac{1}{n}}q^{\delta^i_j},$$    
or the single off-diagonal entries 
     $$\cR\left(x^i_j \otimes x^j_i\right)=R^{ij}_{ji}=\left(q-q^{-1}\right)\qquad \text{for}\qquad j>i.$$
This means that in the formula for $\Psi\left(\left( \prod_{i=1}^N x^{k_i}_{l_i} \right) x^{k_{N+1}}_{l_{N+1}}\right)$ given in \Cref{prop:braid_formula}, many of the summands are zero. We proceed by analysing the conditions for which the summands are non-zero to find that with the specific values of $l_i = l$ and $k_i = k$, for $i=1,\ldots, N$, with $l \neq k$, there is a unique non-zero summand appearing in the formula from Proposition \ref{prop:braid_formula}. 
    
   For each $i=1,\ldots, N-1$, if $\gamma_i = k$ then the term     $\cR \left(S \left(x^{k}_{\alpha_{N-i}}\right), x^{\gamma_i}_{\gamma_{i+1}} \right)$ is only non-zero if both $\alpha_{N-i} = k$ and $\gamma_{i+1} = \gamma_i=k$. As $\gamma_1 = k$ this implies that $\alpha_i = k$ and $\gamma_i = k$ for all $i=1,\ldots, N$.
   
Again, for each $i=1,\ldots, N-1$, if $\delta_i = k$ then as $k \neq l$, the other term, $\cR\left( x^{\beta_i}_{l}, x^{\delta_i}_{\delta_{i+1}} \right)$, is only non-zero if both $\beta_i = l$ and $\delta_i = \delta_{i+1}=k$. As $\gamma_1 = \gamma_N =k$ we find that $\gamma_{N}=\delta_1=l$ and this implies that $\beta_i = l$ and $\delta_i = k$ for all $i=1,\ldots, N$.
    Hence, we have 
    \begin{align*}
        \Psi\left(\left( x^{k}_{l} \right)^N\right) =& \sum_{\alpha_i, \beta_i, \gamma_i, \delta_i} 
        \left( \prod_{i=1}^{N-1} \cR \left(S \left(x^{k}_{\alpha_{N-i}}\right), x^{\gamma_i}_{\gamma_{i+1}} \right) \cR\left( x^{\beta_i}_{l}, x^{\delta_i}_{\delta_{i+1}} \right) \right)\\
        &\cdot \Psi\left(\left( \prod_{i=1}^{N-1} x^{\alpha_i}_{\beta_i}\right)\right) \br u^{\delta_{N}}_{l} \\
        =& \left( \cR \left(S \left(x^{k}_{k}\right), x^{k}_{k} \right) \cR\left( x^{l}_{l}, x^{k}_{k} \right) \right)^{N-1}\Psi\left(\left( x^{k}_{l} \right)^{N-1}\right) \br u^{k}_{l} \\
        =& q^{-(N-1)} \Psi\left(\left( x^{k}_{l} \right)^{N-1}\right) \br u^{k}_{l}.
    \end{align*}
An inductive argument on $N$ shows that
$$\Psi\left(\left( x^{k}_{l} \right)^N\right)=q^{-(N-1)}q^{-(N-2)}\ldots q^{-1} \left(u^{k}_{l}\right)^{\br N}=q^{-\frac{1}{2}(N-1)N}\left(u^{k}_{l}\right)^{\br N}
$$
and completes the proof. 
\end{proof} 

\begin{prop}
    \label{prop: u^k_k recursion}
For any $N\geq 1$ and $1\leq k,l \leq  n$ we have 
    \[\Psi\left(\left( x^k_k \right)^N x^k_l\right) = \Psi\left(\left(x^k_k \right)^N\right) \br u^k_l + \left(1 -q^{-2N}\right) \sum_{\beta < k} \Psi\left(\left( x^k_k \right)^{N-1} x^k_{\beta}\right) \br u^{\beta}_l\]
\end{prop}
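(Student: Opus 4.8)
The plan is to apply Proposition~\ref{prop:braid_formula} with $k_i=k$ and $l_i=k$ for $i=1,\dots,N$ and with $k_{N+1}=k$, $l_{N+1}=l$, so that its left-hand side is exactly $\Psi\left(\left(x^k_k\right)^N x^k_l\right)$, and then to extract the non-vanishing summands using the sparsity of the dual R-matrix $\cR$ from \eqref{eq:R-matrix-sln}. As recorded in the proof of Proposition~\ref{lem:u^k_l-powers}, $\cR$ is supported on the diagonal pairs $\cR(x^i_i\otimes x^j_j)=q^{-1/n}q^{\delta^i_j}$ and the single off-diagonal pairs $\cR(x^i_j\otimes x^j_i)=q^{-1/n}(q-q^{-1})$ for $j>i$, with the corresponding formulas for $\cR^{-1}(\cdot\otimes\cdot)=\cR(S(\cdot)\otimes\cdot)$ obtained from the inverse R-matrix; the ubiquitous factors $q^{\pm 1/n}$ will cancel globally, exactly as in Proposition~\ref{lem:u^k_l-powers}.

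First I would analyse the chain of $S$-twisted factors $\cR\left(S\left(x^k_{\alpha_{N-i+1}}\right),x^{\gamma_i}_{\gamma_{i+1}}\right)$. Starting from the boundary condition $\gamma_1=k$, I would check that the off-diagonal branch of $\cR^{-1}$ would force $\gamma_i>k$, which is incompatible with $\gamma_1=k$ propagated diagonally; hence this chain is forced entirely diagonal, giving $\alpha_i=k$ for all $i$ and $\gamma_{N+1}=k$, so that $\delta_1=\gamma_{N+1}=k$. This collapses the surviving sum to terms $\Psi\left(\prod_{i=1}^N x^k_{\beta_i}\right)\br u^{\delta_{N+1}}_l$. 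Next I would run the analogous analysis on the untwisted chain $\cR\left(x^{\beta_i}_k,x^{\delta_i}_{\delta_{i+1}}\right)$ with $\delta_1=k$: here the off-diagonal branch requires $\delta_i=k$ and outputs $\delta_{i+1}=\beta_i<k$, after which every later factor is forced diagonal (a second jump would again require $\delta_i=k$). Thus each surviving summand either stays diagonal throughout—forcing all $\beta_i=k$, $\delta_{N+1}=k$, and contributing $\Psi\left(\left(x^k_k\right)^N\right)\br u^k_l$ with total coefficient $1$—or has exactly one jump at a position $j\in\{1,\dots,N\}$, forcing $\beta_j=\beta<k$, $\beta_i=k$ for $i\neq j$, and $\delta_{N+1}=\beta$, with coefficient $q^{j-1-N}(q-q^{-1})$ after the $q^{\pm 1/n}$ cancellation.

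It then remains to collect the jump terms $\Psi\left(\left(x^k_k\right)^{j-1}x^k_\beta\left(x^k_k\right)^{N-j}\right)\br u^\beta_l$. Using the relation $x^k_k x^k_\beta=qx^k_\beta x^k_k$ (valid for $\beta<k$ by \Cref{defn:OLqMn}) to commute $x^k_\beta$ past the trailing $x^k_k$ factors into the final slot, together with the linearity of $\Psi$ as an isomorphism of free modules, I would rewrite $\left(x^k_k\right)^{j-1}x^k_\beta\left(x^k_k\right)^{N-j}=q^{j-N}\left(x^k_k\right)^{N-1}x^k_\beta$ inside $\Psi$. Summing the resulting geometric series,
\begin{align*}
(q-q^{-1})\sum_{j=1}^{N} q^{(j-1-N)+(j-N)}=(q-q^{-1})\sum_{j=1}^{N} q^{2j-1-2N}=1-q^{-2N},
\end{align*}
produces precisely the claimed coefficient on $\Psi\left(\left(x^k_k\right)^{N-1}x^k_\beta\right)\br u^\beta_l$ and completes the proof. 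Note that $l$ plays no role in either chain, so no hypothesis $l\neq k$ is needed.

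I expect the main obstacle to be the bookkeeping in the second paragraph: rigorously establishing that the $S$-twisted chain admits \emph{no} off-diagonal jump while the untwisted chain admits \emph{exactly one}, and pinning down the exact $q$-power attached to a jump at position $j$ (including verifying that the $q^{\pm 1/n}$ contributions from the $2N$ surviving R-matrix factors cancel). Once these coefficients are fixed, the reduction via the quadratic relation and the geometric summation are routine.
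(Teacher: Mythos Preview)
Your proposal is correct and follows essentially the same route as the paper: both apply Proposition~\ref{prop:braid_formula}, force the $S$-twisted chain to be entirely diagonal (so $\alpha_i=\gamma_i=k$ and $\delta_1=k$), show the untwisted chain is either fully diagonal or has exactly one jump at some position $j$, then use the $O_q(M_n)$-relation $x^k_k x^k_\beta=q\,x^k_\beta x^k_k$ to normalize $\left(x^k_k\right)^{j-1}x^k_\beta\left(x^k_k\right)^{N-j}$ and sum the resulting geometric series to $1-q^{-2N}$. Your tracking of the $q^{\pm 1/n}$ cancellation and the jump coefficient $q^{j-1-N}(q-q^{-1})$ matches the paper's computation exactly.
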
 

\begin{proof}
    In this proof, we also employ the approach of using \cref{prop:braid_formula} and identifying non-zero summands used in the proof of Proposition \ref{lem:u^k_l-powers}. We begin by examining the product
\[
    \prod_{i=1}^N \cR \left(S \left(u^{k}_{\alpha_{N-i+1}}\right), u^{\gamma_i}_{\gamma_{i+1}} \right).    
\]
If $\gamma_i = k$, for the $i$-th factor $\cR \left(S \left(u^{k}_{\alpha_{N-i+1}}\right), u^{\gamma_i}_{\gamma_{i+1}} \right)$ to be non-zero, it must be diagonal with $\gamma_{i+1} = \alpha_{N-i+1} = k$. Since we have that $\gamma_1 = k$, we conclude that
\[\prod_{i=1}^N \cR \left(S\left( u^{k}_{\alpha_{N-i+1}}\right), u^{\gamma_i}_{\gamma_{i+1}} \right) = \prod_{i=1}^N \cR \left(S\left( u^{k}_{k}\right), u^{k}_{k} \right) = q^{N(1/n - 1)}\]
with $\alpha_1=\ldots=\alpha_N = k$ and $\gamma_1=\ldots=\gamma_{N+1} = k$.

Now we consider
\[\prod_{i=1}^N \cR\left( u^{\beta_i}_{k}, u^{\delta_i}_{\delta_{i+1}} \right).\]
There are two cases: 
\begin{enumerate}
\item[(1)] either there is a value of $j \geq 1$ such that $\beta_i = k$ for all $i < j$ and $\beta_j<k$, or 
\item[(2)] there exists no such $j$ and $\beta_i = k$ for all $1 \leq i \leq N$.
\end{enumerate}
Case\ (1): Assume that there exists such a $j$. Given that $\delta_1 = \gamma_{N+1} = k$, the only way for the terms $\cR\left( u^{\beta_i}_{k}, u^{\delta_i}_{\delta_{i+1}} \right)$, for $i<j$, to be non-zero is to have $\delta_i = k$ for all $1 \leq i \leq j$. This forces the  $j$-th term to be $\cR\left( u^{\beta_j}_{k}, u^{k}_{\beta_j} \right)$ with $\delta_{j+1}=\beta_j$. This in turn forces the terms $m > j$ to all be $\cR\left( u^{k}_{k}, u^{\beta_j}_{\beta_j} \right)$ with $\beta_m = k$ and $\delta_m =\beta_j$. Hence, we have
\begin{align*}
    \prod_{i=1}^N \cR\left( u^{\beta_i}_{k}, u^{\delta_i}_{\delta_{i+1}} \right) &=
    \left( 
    \prod_{i=1}^{j-1} \cR\left( u^{k}_{k}, u^{k}_{k} \right)
    \right) 
    \cR\left( u^{\beta_j}_{k}, u^{k}_{\beta_j} \right) 
    \left( 
    \prod_{i=j+1}^{N} \cR\left( u^{k}_{k}, u^{\beta_j}_{\beta_j} \right)
    \right) \\
    &= \left(q - q^{-1}\right) q^{-N/n + j-1}.
\end{align*}
Case\ (2): Now we assume that no such $j$ exists and so we have $\beta_i = k$ for all $1 \leq i \leq N$ we must have 
\[
    \prod_{i=1}^N \cR\left( u^{k}_{k}, u^{\delta_j}_{\delta_{j+1}} \right) = q^{N(-1/n + 1)}
\]
with $\delta_i = k$ for all $1 \leq i \leq N$. This follows as $\delta_1=k$ and non-zero entries only occur if $\delta_i=\delta_{i+1}$. 

By combining all the above information and using the relation $u^k_{\beta} u^k_k = q^{-1} u^k_k u^k_{\beta}$ for $\beta < k$, we conclude that
\begin{align*}
        \Psi\left(\left(x^k_k\right)^{N}x^k_l\right)
        =& q^{N(1/n - 1)} q^{N(-1/n + 1)} \Psi\left(\left(u^k_k\right)^N\right) \br u^k_l \\
        &+ q^{N(1/n - 1)} \left(q - q^{-1}\right) \sum_{\beta < k} \sum_{j=1}^N q^{-N/n + j-1} \Psi\left(\left( x^k_k \right)^{j-1} x^k_{\beta} \left( x^k_k\right)^{N-j} \right) \br u^{\beta}_l\\
        =&  \Psi\left(\left(x^k_k\right)^N\right) \br u^k_l \\
        &+ q^{N(1/n - 1)} \left(q - q^{-1}\right) \sum_{\beta < k} \sum_{j=1}^N q^{-N/n + j-1 - N+j} \Psi\left(\left( x^k_k \right)^{N-1} x^k_{\beta} \right)\br u^{\beta}_l \\
        =& \Psi\left( \left(x^k_k\right)^N\right) \br u^k_l \\
        &+ q^{-2N -1} \left(q - q^{-1}\right) \sum_{\beta < k} \left(\sum_{j=1}^N q^{2j}\right) \Psi\left(\left( x^k_k \right)^{N-1} x^k_{\beta} \right)\br u^{\beta}_l \\
        =&  \Psi\left(\left(x^k_k\right)^N\right)\br u^k_l + \left(1-q^{-2N}\right) \sum_{\beta < k} \Psi\left(\left( x^k_k \right)^{N-1} x^k_{\beta}\right) \br u^{\beta}_l,
\end{align*}
where the first equality uses Proposition~\ref{prop:braid_formula}. This completes the proof.
\end{proof}

\begin{lem} For all $N\geq 1$, $k,l=1,\ldots, n$, we have
    \label{lem:1}
    \[\Psi\left(\left( x^k_k \right)^N x^k_l \right)= \sum_{i=0}^N \sum_{\bm{\beta}}\left( \prod_{j=1}^i \left( 1- q^{-2(N+1-j)} \right) \right) 
   \Psi\left( \left( x^k_k \right)^{N-i}\right) \br \unprod_{j=1}^{i} u^{\beta_j}_{\beta_{j+1}} \br u^{\beta_{i+1}}_l\,,
    \]
    where  the sum is taken over all $\bm{\beta}=(\beta_1,\ldots, \beta_{i+1})$ with $\beta_1 = k$  and $\beta_j < k$ for all $1<j \leq i+1$.
\end{lem}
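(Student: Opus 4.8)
The plan is to prove this by induction on $N$, with the single engine being the one-step recursion already established in \Cref{prop: u^k_k recursion}. The base case is $N=0$: there the asserted identity collapses to its lone $i=0$ term, $\bm{\beta}=(k)$, namely $\Psi(1)\br u^k_l = u^k_l$, which is exactly $\Psi(x^k_l)=u^k_l$ since the twisting map $\Psi$ fixes generators. (One may equally take $N=1$ as the base and check it against the two terms of the recursion.) So the content is entirely in the inductive step.

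For the inductive step, assume the formula holds at level $N-1$ for every target index. Applying \Cref{prop: u^k_k recursion} gives
\[
\Psi\!\left(\left( x^k_k \right)^N x^k_l\right) = \Psi\!\left(\left(x^k_k \right)^N\right) \br u^k_l + \left(1 -q^{-2N}\right) \sum_{\beta < k} \Psi\!\left(\left( x^k_k \right)^{N-1} x^k_{\beta}\right) \br u^{\beta}_l .
\]
The first summand is precisely the $i=0$, $\bm{\beta}=(k)$ term of the claimed expansion. Into each summand of the second term I would substitute the induction hypothesis for $\Psi\!\left(\left( x^k_k \right)^{N-1} x^k_{\beta}\right)$, which produces a nested sum over $i'=0,\dots,N-1$ and over tuples $\bm{\gamma}=(\gamma_1,\dots,\gamma_{i'+1})$ with $\gamma_1=k$ and $\gamma_j<k$ for $1<j\le i'+1$. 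The key reindexing is to set $i=i'+1$ (so $i$ runs from $1$ to $N$, and $N-1-i'=N-i$) and to absorb the outer summation variable $\beta<k$ as the final entry of a longer tuple, defining $\bm{\beta}=(\gamma_1,\dots,\gamma_{i'+1},\beta)$ of length $i+1$. Then $\beta_1=\gamma_1=k$ and $\beta_j<k$ for all $1<j\le i+1$, matching the constraints in the statement, while the word $\unprod_{j=1}^{i'} u^{\gamma_j}_{\gamma_{j+1}} \br u^{\gamma_{i'+1}}_{\beta}\br u^{\beta}_l$ becomes exactly $\unprod_{j=1}^{i} u^{\beta_j}_{\beta_{j+1}}\br u^{\beta_{i+1}}_l$.

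The one calculation to verify carefully is that the coefficients combine correctly: the product $\left(1-q^{-2N}\right)\prod_{j=1}^{i-1}\left(1-q^{-2(N-j)}\right)$ coming from the recursion prefactor times the induction-hypothesis product must equal $\prod_{j=1}^{i}\left(1-q^{-2(N+1-j)}\right)$. This is immediate once one peels off the $j=1$ factor $1-q^{-2N}$ from the target product and shifts $j\mapsto j-1$ in the remaining factors, since $N+1-j=N-(j-1)$. This telescoping bookkeeping of the nested index sets and the coefficient product is the only real obstacle; it is mechanical rather than conceptual, but it requires care to keep the tuple-length conventions (empty products equal to $1$, the appended final index $\beta$) consistent with the empty-product conventions already fixed before \Cref{prop:braid_formula}. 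Assembling the $i=0$ term with the reindexed $i=1,\dots,N$ terms then yields precisely the asserted closed form, completing the induction.
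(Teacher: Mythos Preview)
Your proposal is correct and follows essentially the same approach as the paper's proof: induction on $N$ using \Cref{prop: u^k_k recursion} for the recursion, with the same reindexing $i=i'+1$ that absorbs the outer summation variable $\beta$ as the last entry of the extended tuple, and the same telescoping verification of the coefficient product. The only cosmetic difference is that the paper takes $N=1$ (checked against \eqref{eq:twist-quadratic}) as the base case rather than $N=0$, which you already flag as an equivalent alternative.
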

\begin{proof}
    We proceed by induction on $N$. In the base case $N=1$ the formula specializes to
$$    
\Psi\left(x^k_kx^k_l \right)=u^k_k\br u^k_l + \sum_{\beta<k}\left(1-q^{-2}\right)u^k_\beta \br u^\beta_l,
$$  
which holds by Equation~\eqref{eq:twist-quadratic}. Now assume $N\geq 2$ and that the statement holds for smaller powers. 
     By \cref{prop: u^k_k recursion} we have 
    \begin{align*}
        \Psi\left(\left( x^k_k \right)^N x^k_l \right)&= \Psi\left(\left(x^k_k \right)^N\right) \br u^k_l + \left(1 -q^{-2N}\right) \sum_{\beta < k} \Psi\left(\left( x^k_k \right)^{N-1} x^k_{\beta}\right) \br u^{\beta}_l
    \end{align*}
    and using the induction hypothesis then gives
    \begin{align*}
        \Psi\left(\left( x^k_k \right)^N x^k_l\right) =& \Psi\left(\left(x^k_k \right)^N\right) \br u^k_l + \left(1 -q^{-2N}\right) \sum_{\bm{\beta}}\sum_{i=0}^{N-1}\sum_{\beta_{i+2}<k} \left( \prod_{j=1}^i \left( 1- q^{-2(N-j)} \right) \right) \\&
        \cdot\Psi\left(\left( x^k_k \right)^{N-1-i}\right) \br \unprod_{j=1}^{i+1} u^{\beta_j}_{\beta_{j+1}} \br u^{\beta_{i+2}}_l.
    \end{align*}
    Changing the indexing to increase $i$ by $1$, by appending $\beta_{i+2}$ to $\bm{\beta}$, yields
    \begin{align*}
        \Psi\left(\left( x^k_k \right)^N x^k_l\right) =& \Psi\left(\left(x^k_k \right)^N\right) \br u^k_l + \sum_{\bm{\beta}}\sum_{i=1}^{N} \left( \prod_{j=1}^{i} \left( 1- q^{-2(N+1-j)} \right) \right) \\
        &\cdot
        \Psi\left(\left( x^k_k \right)^{N-i}\right) \br \unprod_{j=1}^{i} u^{\beta_j}_{\beta_{j+1}} \br u^{\beta_{i+1}}_l \\
        =& \sum_{\bm{\beta}}\sum_{i=0}^{N} \left( \prod_{j=1}^{i} \left( 1- q^{-2(N+1-j)} \right) \right) 
        \Psi\left(\left( x^k_k \right)^{N-i}\right) \br \unprod_{j=1}^{i} u^{\beta_j}_{\beta_{j+1}} \br u^{\beta_{i+1}}_l.
    \end{align*}
    and proves the claim.
\end{proof}

We now recall Notation~\ref{notation:comp} for compositions $\lambda\models N$ of $N$ and Definition~\ref{def:k-extension} for the associated $q$-scalars $\scalar_q(\lambda)$ and the set $V^k(\lambda)$.

\begin{prop}\label{prop:u^k_k-twisted}
    For all integers $N\geq 2$ and $1\leq k\leq n$, the equality
    \[
        \Psi\left(\left( x^k_k \right)^N\right) = \sum_{\lambda\models N} \scalar_q(\lambda) \sum_{\bm{\beta}\in V^k(\lambda)} \unprod_{i=1}^{N} u^{\beta_j}_{\beta_{j+1}},
    \]
holds in $B_q^\intf(M_n)$. Here, the sum is taken over all compositions $\lambda$ of $N$ and all elements $\bm{\beta}=(\beta_1,\ldots, \beta_{N+1})\in V^k(\lambda)$, see Definition~\ref{def:k-extension}.
\end{prop}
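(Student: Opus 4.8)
The plan is to prove the identity by strong induction on $N$, peeling off the \emph{last} part of each composition by means of the recursion in Lemma~\ref{lem:1}. For the base case $N=1$ one has $\Psi(x^k_k)=u^k_k$ by \eqref{eq:twistingmap}, which matches the right-hand side since the only composition of $1$ is $(1)$, with $\scalar_q((1))=1$ and $V^k((1))=\{(k,k)\}$. For the inductive step I would write $(x^k_k)^{N+1}=(x^k_k)^N x^k_k$ and apply Lemma~\ref{lem:1} with $l=k$. This expresses $\Psi\bigl((x^k_k)^{N+1}\bigr)$ as a sum over $i=0,\ldots,N$ of
$$\Bigl(\prod_{j=1}^i\bigl(1-q^{-2(N+1-j)}\bigr)\Bigr)\,\Psi\bigl((x^k_k)^{N-i}\bigr)\br H_i,$$
where, after setting $\beta_{i+2}:=k$, the head $H_i=\sum_{\bm\beta}\unprod_{j=1}^{i+1}u^{\beta_j}_{\beta_{j+1}}$ is exactly the $V^k$-sum $\sum_{\bm\delta\in V^k((i+1))}\unprod_{j=1}^{i+1}u^{\delta_j}_{\delta_{j+1}}$ for the single-part composition $(i+1)\models i+1$: its defining conditions $\beta_1=\beta_{i+2}=k$ and $\beta_2,\dots,\beta_{i+1}<k$ are precisely those cutting out $V^k((i+1))$.

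Next I would substitute the inductive hypothesis for $\Psi\bigl((x^k_k)^{N-i}\bigr)$ when $1\le N-i\le N$, and use $\Psi(1)=1$ directly in the remaining case $i=N$ (which reproduces the single-part composition $(N+1)$). The key combinatorial step is to recognize that concatenating the tail monomial indexed by $\bm\gamma\in V^k(\mu)$ with $\mu\models N-i$ and the head monomial indexed by $\bm\delta\in V^k((i+1))$ glues the two blocks along their shared boundary value $k$: the tail ends with lower index $\gamma_{N-i+1}=k$ and the head begins with upper index $\delta_1=k$, so they overlap in a single index and produce a length-$(N+1)$ braided monomial $\unprod_{j=1}^{N+1}u^{\eta_j}_{\eta_{j+1}}$. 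This gluing realizes a bijection between pairs $(\bm\gamma,\bm\delta)\in V^k(\mu)\times V^k((i+1))$ and tuples $\bm\eta\in V^k(\lambda)$ for the composition $\lambda=(\mu_1,\dots,\mu_{|\mu|},\,i+1)\models N+1$ with last part $\lambda_{-1}=i+1$ and $\lambda_{[1,-2]}=\mu$; the inverse simply splits $\bm\eta$ at the forced value $\eta_{N-i+1}=k$, which is the penultimate $k$-position of $\lambda$.

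Finally I would match coefficients using the scalar recursion of Lemma~\ref{lem:lambda-scalar}: for such $\lambda$ it gives $\scalar_q(\lambda)=\bigl(\prod_{j=1}^{i}(1-q^{-2(N+1-j)})\bigr)\,\scalar_q(\mu)$, which is exactly the product of the prefactor $\prod_{j=1}^i(1-q^{-2(N+1-j)})$ from Lemma~\ref{lem:1} and the scalar $\scalar_q(\mu)$ supplied by the inductive hypothesis. Summing over all $\bm\gamma,\bm\delta$ thus collects, for each fixed $i$, precisely the contribution of all compositions of $N+1$ whose last part equals $i+1$; summing over $i=0,\dots,N$ then ranges over all possible last parts and hence over all compositions $\lambda\models N+1$, yielding the claimed formula. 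I expect the main obstacle to be the bookkeeping in the combinatorial gluing step—verifying that concatenation respects the $k$-positions, that it is genuinely bijective in both directions, and that the index shift from $N$ to $N+1$ aligns the exponents $N+1-j$ appearing in Lemma~\ref{lem:1} with those in Lemma~\ref{lem:lambda-scalar}—rather than any further algebraic manipulation in $B^\intf_q(M_n)$.
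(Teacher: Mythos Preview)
Your proposal is correct and follows essentially the same route as the paper: both proofs apply Lemma~\ref{lem:1} with $l=k$ to peel off the last factor, invoke the induction hypothesis on the lower power $\Psi\bigl((x^k_k)^{N-i}\bigr)$, glue the resulting $V^k$-tuples to obtain elements of $V^k(\lambda)$ for the composition with appended last part, and match scalars via Lemma~\ref{lem:lambda-scalar}. The only cosmetic differences are that the paper runs the induction from $N-1$ to $N$ (reindexing the sum in Lemma~\ref{lem:1} to start at $i=1$) and takes $N=2$ as its explicit base case, whereas you go from $N$ to $N+1$ and start from $N=1$; neither choice affects the argument.
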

\begin{proof}
    We proceed by induction on $N$. We start with the induction base $N=2$. In this case, 
    $\lambda=(1,1), (2)$ are the only possible partitions and $\scalar_q(1,1)=1$, $\scalar_q(2)=1-q^{-2}$. Moreover, we have that 
    $$V^k(1,1)=\Set{(k,k,k)}, \qquad V^k(2)=\Set{(k,\beta,k)~|~ 1\leq \beta<k}.$$
    Thus, the claimed formula gives 
    $$
    \Psi\left(\left(x^k_k\right)^2\right)=u^k_k\br u^k_k + \left(1-q^{-2}\right)\sum_{\beta<k}u^k_\beta\br u^\beta_k.
    $$
    This matches the expression for $\Psi\left(\left(x^k_k\right)^2\right)$ obtained directly from Equation~\eqref{eq:twist-quadratic}.
    
    Now assume $N \geq 3$ is an integer and that the claim holds for smaller powers. By \cref{lem:1}, we have
\begin{align*}
 \Psi\left(\left( x^k_k \right)^N\right)= \sum_{\bm{\mu}}\sum_{i=1}^N \left( \prod_{j=1}^{i-1} \left( 1- q^{-2(N-j)} \right) \right) 
   \Psi\left( \left( x^k_k \right)^{N-i}\right) \br \unprod_{j=1}^{i-1} u^{\mu_j}_{\mu_{j+1}} \br u^{\mu_{i}}_k\,.
\end{align*}
where $\mu_1 = k$  and $\mu_j < k$ for all $1<j \leq i$.
Applying the induction hypothesis gives 
\begin{align*}
 \Psi\left(\left( x^k_k \right)^N\right)= & \sum_{\bm{\mu}}\sum_{i=1}^N \left( \prod_{j=1}^{i-1} \left( 1- q^{-2(N-j)} \right) \right) \\
& \cdot 
   \left(\sum_{\lambda'\models N-i} \scalar_q(\lambda') \sum_{\bm{\beta}\in V^k(\lambda')} \unprod_{i=1}^{N-i} u^{\beta_j}_{\beta_{j+1}}\right) \br \unprod_{j=1}^{i-1} u^{\mu_j}_{\mu_{j+1}}\,
\end{align*}
    where the first summation runs over all $\bm{\mu}=(\mu_1,\ldots, \mu_i)$ with $\mu_j < k$ for $1<k<i$ and $\mu_1 = k = \mu_{i}$. 

Observe that the space of compositions is stratified as the disjoint union
    \[
        \left\{ \lambda ~|~ \lambda\models N\right\} =\bigsqcup_{i=1}^N  \left\{ (p_1,\ldots, p_k, i) ~|~ k\geq 1, \text{ and } (p_1,\ldots, p_k)\models N-i\right\}.
    \]
    Thus, we can extend the given composition $\lambda'$ of $N-i$ by appending $i$ and get all compositions $\lambda\models N$ of $N$ ending with $i$. Using this bijection in the above formula for re-indexing, we have that 
    \begin{align*}
        \Psi\left(\left( u^k_k \right)^N \right) 
        =& 
        \sum_{\bm{\mu}}\sum_{\lambda\models N} \left( \prod_{j=1}^{\lambda_{-1}-1} \left( 1- q^{-2(N-j)} \right) \right) \scalar_q\left(\lambda_{[1, -2]}\right)  \\
        &\cdot \sum_{\bm{\beta}\in V^k\left(\lambda_{[1,-2]}\right)} \unprod_{j=1}^{N-\lambda_{-1}} u^{\beta_j}_{\beta_{j+1}} \br \unprod_{j=1}^{\lambda_{-1}-1} u^{\mu_j}_{\mu_{j+1}}\\
        =& \sum_{\lambda\models N} \left( \prod_{j=1}^{\lambda_{-1}-1} \left( 1- q^{-2(N-j)} \right) \right) \scalar_q\left(\lambda_{[1, -2]}\right) \sum_{\bm{\beta}\in V^k\left(\lambda\right)} \unprod_{j=1}^{N} u^{\beta_j}_{\beta_{j+1}}.
    \end{align*}
Here, the second equality we re-index again, by extending the given $\bm{\beta}\in V^k(\lambda_{[1,-2]})$ by the given $\bm{\mu}$. That is, we set $\beta_{N-\lambda_{-1}+1+j}:=\mu_j$, for $j=1,\ldots, \lambda_{-1}$, i.e.
$$(\beta_1,\ldots, \beta_{N-\lambda_{-1}+1},\mu_1,\ldots, \mu_{\lambda_{-1}})=(\beta_1,\ldots, \beta_{N+1})=\bm{\beta}\in V^k(\lambda\models N),$$
where now $\beta_{N-\lambda_{-1}+2}=\mu_1=k$ and $\beta_{N+1}=\mu_{\lambda_{-1}}=k$. Thus, the extended string on the right hand side defines an element in $V^k(\lambda)$. We observe that any element  of $V^k(\lambda)$ is obtained as such an extension from an element $V^k(\lambda_{[1,-2]})$, since all possible $\beta_{N-\lambda_{-1}+3}=\mu_2, \ldots, \beta_{N}=\mu_{\lambda_{-1}-1}<k$ appeared in the summation over $\mu$. Finally, 
$$\Psi\left(\left( u^k_k \right)^N \right) 
        = \sum_{\lambda\models N} \scalar_q\left(\lambda\right) \sum_{\bm{\beta}\in V^k\left(\lambda\right)} \unprod_{j=1}^{N} u^{\beta_j}_{\beta_{j+1}},$$
using Lemma~\ref{lem:lambda-scalar}. This completes the proof. 
\end{proof}

\appendix

\section{Cocompletion of \texorpdfstring{$R$}{R}-linear tensor categories}
\label{sec:appendix}

In this section, we collect some facts on locally finitely presentable categories with focus on cocompletions of $R$-linear monoidal categories with cocontinuous tensor products building on work of \cites{Lyu3}. For the general theory of locally finitely presentable categories, we refer to \cite{AR}, with \cite{Kel2} for the enriched case. 
We assume that all categories are skeletally small, Kar\-ou\-bi\-an (i.e., additive and idempotent complete), and $R$-linear over a non-zero commutative ring $R$. The symmetric monoidal category of $R$-modules is denoted by $\lMod{R}$.

\subsection{Categories enriched over a commutative ring}

The category $\lMod{R}$ is closed with respect to the internal hom functor 
$$\Hom_R(V\otimes_R W, U)\cong \Hom_R(V,\iHom(W,U)),$$
where $\iHom(W,U)$ is the space of maps of abelian groups $f\colon W\to U$, with $R$-action given by $(r\cdot f)(w)=rf(w),$ and $\Hom_R$ denotes the abelian group of $R$-linear maps. Thus, $\lMod{R}$ is a cartesian closed, complete and cocomplete, symmetric monoidal category.

Let $\cC$ be a category enriched over $R$-modules. An appropriate concept of (co)limit taking into account the enriched structure is that of a \emph{conical (co)limit} \cites{Kel, Rie}. Given (unenriched) indexing category $\cD$ with an (unenriched) diagram functor $D\colon \cD\to \cC$, the \emph{(conical) limit} $\lim D$ is an object in $\cC$ such that  
for every object $X$ in $\cC$, there is a natural isomorphism of $R$-modules 
$$\Hom_\cC(X,\lim D)\cong \Hom_{\Fun(\cD,\cC)}(R,\Hom_\cC(X,F(-)).$$
This means that there is an $R$-linear isomorphism between morphisms $\eta\colon X\to \lim D$ in $\cC$ and diagrams $(\eta_d)_{d\in \cD}$ of morphisms $\eta_d\colon X\to D(d)$ such that for any morphism $f\colon d\to d'$ in $\cD$, the diagram 
$$\xymatrix{
X\ar[rr]^{\eta_d}\ar[dr]^{\eta_{d'}}&&D(d)\\
&D(d')\ar[ur]^{D(f)}&
}$$
commutes. Conical colimits are defined dually.

Assume that the $R$-linear category $\cC$ is \emph{tensored} over $\lMod{R}$, i.e., that $\cC$ comes with a categorical action 
$$\lMod{R}\times \cC\to \cC, \qquad (V,X)\mapsto V\otimes_R X, $$
with natural isomorphisms
$$\Hom_\cC(V\otimes_R X,Y)\cong \Hom_{R}(V,\Hom_\cC(X,Y)).$$
Further, we assume that that $\cC$ is \emph{cotensored} over $\lMod{R}$, i.e., that $\cC$ comes with a functor 
$$\lMod{R}\times \cC\to \cC, \quad (V,X)\mapsto \iHom_R(V,X),$$
such that there are natural isomorphisms 
$$ \Hom_{R}(V,\Hom_\cC(X,Y))
\cong \Hom_\cC(X,\iHom_R(V,Y)).
$$

We note that if $\cC$ is both tensored and cotensored over $\lMod{R}$, any (co)limit in $\cC$ is a \emph{conical} (co)limit in the above sense \cite{Rie}*{Theorem~7.5.3}. The $R$-linear category $\Fun_R(\cC^\vee, \lMod{R})$ of $R$-linear functors is both tensored and cotensored over $\lMod{R}$ with 
$$(V\otimes_R M)(W):=V\otimes_R M(W), \quad \iHom_R(V,M)(W):=\iHom_R(V,M(W)),$$
for $R$-modules $V,W$ and any $R$-linear functor $M\colon \cC^\vee\to \lMod{R}$.

\subsection{Locally finitely presentable categories}

Assume that $\cC$ is an $R$-linear category with directed colimits, tensored and cotensored over $\lMod{R}$. An object $X$ of $\cC$ is \emph{compact} (or \emph{finitely presented}) if the functor $\Hom_\cC(X, -)\colon \cC\to \lMod{R}$ commutes with directed colimits.\footnote{Equivalently, directed colimit can be replaced by filtered colimits \cite{AR}.}
We denote by $\cC_\circ$ the full subcategory of compact objects.
Every representable object $\Hom_\cC(-,X)$ is compact in $\Fun(\cC^\vee,\lMod{R})$ as it preserves colimits \cite{AR}*{1.A}.
The finitely presented objects of the presheaf category are precisely the finite colimits of representables.

The category $\cC$ is \emph{compactly generated} if every object is a directed colimit  of compact objects. If $\cC$ is compactly generated and cocomplete then $\cC$ is \emph{locally finitely presentable (LFP)}.\footnote{In the enriched case, this definition is found in \cite{Kel2}*{(2.1)} and uses \emph{conical} colimits. Since we assume $\cC$ is tensored and cotensored over $\lMod{R}$, it suffices to consider ordinary colimits.}

Recall that a colimit is directed if the diagram category $D$ is a poset $(I,\leq)$ so that for any two elements $i,j$ there exists $k$ such that $i\leq k$ and $j\leq k$. We write 
\begin{equation}\label{eq:X-colim}
X=\colim_{i\in I}X_i
\end{equation}
when $X$ is such a directed colimit over the diagram $X_I\colon I\to \cC_\circ, i\mapsto X_i$. A morphism between two such objects 
\begin{equation}
f\colon \colim_{i\in I}X_i\to \colim_{j\in J}Y_j\label{eq:Ind-morphism}
\end{equation}
corresponds to a map of posets $\phi\colon I\to J$ and morphisms $f_i\colon X_i\to Y_{\phi(i)}$ such that, for all $i\leq j$, the 
diagrams
$$\xymatrix{X_i\ar[rr]^{f_i}\ar[d]&&Y_{\phi(i)}\ar[d]\\
X_{j}\ar[rr]^{f_j}&&Y_{\phi(j)}
}$$
commute. This data describes a functor $\phi\colon (I,\leq) \to (J,\leq)$ and a natural transformation between the diagram
\begin{equation}\label{eq:restrict-diagram}
\phi^*X_I\colon I\to \cC_\circ,\quad  i \mapsto Y_{\phi(i)},
\end{equation}
of shape $(I,\leq)$ given by the $X_i$ and the restriction $\phi^*X_I$ of the diagram of shape $(J,\leq)$ given by the $Y_i$ along the functor $\phi$. 

\begin{ex}
\begin{enumerate}
\item 
A vector space is finitely presented if and only if it is finite-dimensional. 
\item More generally, if $R$ is a ring, then the compact $R$-modules are precisely the finitely presentable modules. 
\item If $R$ is a Noetherian ring, then finitely generated modules are finitely presented. 
\item If $A$ is a compact $R$-algebra, then an $A$-module is compact if and only if it is compact as an $R$-module.
\end{enumerate}
\end{ex}

\subsection{Completion under directed colimits}\label{sec:cocompletion}

In the following, we always assume that $\cC$ is a compactly generated $R$-linear category,  tensored and cotensored over $\lMod{R}$. 
Consider the Yoneda embedding 
$$h\colon \cC\to \Fun_R(\cC^\vee,\lMod{R}), \qquad X\mapsto h_X=\Hom_\cC(-,X).$$
We will identify $\cC$ with its image under $h$.
It follows that $h_X$ is a compact object in the LFP category $\Fun_R(\cC^\vee,\lMod{R})$. In fact, all compact objects in the latter category are finite colimits of objects in $\cC$.

Since $\cC$ is compactly generated, the Yoneda embedding factors through the full subcategory $$\wcC:=\Fun_R^\lex(\cC_\circ^\vee,\lMod{R})$$ of \emph{left exact} or \emph{finitely concontinous functors}, i.e., contravariant, additive, $R$-linear functors that preserve finite limits that exist in $\cC_\circ^\vee$ (i.e., send finite colimits in $\cC$ to the corresponding limits in $\lMod{R}$). The restricted Yoneda embedding $h\colon \cC\hookrightarrow \wcC$ is right exact, i.e., preserves finite colimits, \cite{AR}*{1.45~Proposition}.

\begin{defn}
We call $\wcC$ the \emph{cocompletion} of $\cC$. 
\end{defn}

The cocompletion $\wcC$ is a LFP category \cite{AR}*{Section~1.46}. In particular, $\wcC$ is cocomplete  and complete and every object in $\wcC$ is a (countable) directed colimit of objects in $\cC_\circ$. Such objects can be displayed as $X=\colim_{i\in I}X_i$, with $X_i\in \cC_\circ$, as described above in Equation~\eqref{eq:X-colim}.

The cocompletion $\wcC$ has the following universal property \cite{AR}*{1.45 Definition}. If $\cD$ is a cocomplete category and $F\colon \cC_\circ\to \cD$ right exact functor, then there exists an extension to a cocontinuous (i.e., arbitrary colimit preserving) functor 
$\wF\colon \wcC\to\cD$, which is unique up to natural isomorphism. Thus, on an object $X=\colim_{i\in I} X_i$ we have 
$\wF(X)=\colim_{i\in I} F(X_i)$.

If $\cC$ is already LFP then the inclusion $\cC\hookrightarrow \wcC$ is an equivalence \cite{AR}*{1.45--1.46},  \cite{BCJF}*{Proposition~2.2}. In particular, $\wcC$ is equivalent to its cocompletion. We note that a LFP $R$-linear category $\cC$ is not equivalent to the \emph{free} cocompletion $\Fun_R(\cC,\lMod{R})$ since in the latter all representable functors are finitely presentable.

\begin{ex}\label{ex:Rmod-compl}
If $\cC=\lMod{R}$, then the subcategory of compact objects $\cC_\circ$ is the category of finitely presented $R$-modules \cite{Len} which we denote by $\fplmod{R}$.

The category $\cC=\lMod{R}$ itself is LFP so every $R$-module is a directed colimit of finitely presented $R$-modules and $\cC\simeq \wcC$.

We may consider the full subcategory $\cP=\flmod{R}$ of finitely-generated projective $R$-modules. The category $\wcC$ is co-complete so it contains all finite colimits of finitely-generated projective $R$-modules. In particular, all finitely-presented $R$-modules. As every $R$-module is a directed union of finitely-presented ones, $\wcP=\lMod{R}$. 
\end{ex}

Most examples of $R$-linear categories $\cC$ and their completions considered in the main text are of the form discussed in the following examples.

\begin{ex}\label{ex:Cocompletion-A-projR-finite}
Let $A$ be an $R$-algebra that is a finitely generated projective $R$-module. Consider the category $\cC=\flRmod{A}{R}$ which consists of $A$-modules that are finitely generated projective as $R$-modules. By the universal property and the Yoneda embedding, the cocompletion $\wcC$ is a full subcategory of $\lMod{A}$.  As $A$ is a finitely generated projective $R$-module, any finitely presented $A$-module is finitely presented as an $R$-module and hence contained in $\wcC$. Thus, $\wcC$ is equivalent to $\lMod{A}$.
\end{ex}

\begin{ex}\label{ex:Cocompletion-A-projR}
Let $A$ be an $R$-algebra with a filtration $A=\cup_{i\in \mN}A_i$, with $A_i\subset A_j$, where each $A_i$ is finitely generated projective over $R$ and the multiplication $m_A$ satisfies $m_A\colon A_i\otimes A_j\to A_{i+j}$. Then $A/A_i$ is contained in  $\cC=\flRmod{A}{R}$ and $A$ is the directed colimit of the $A/A_i$. Thus, any finitely presented $A$-module is contained in $\wcC$ and this cocompletion is equivalent to $\lMod{A}$.
\end{ex}

\begin{rmk}[The abelian case]
Note that if $\cC$ is abelian and compactly generated, then $\wcC$ is in fact a \emph{Grothendieck category} and $\cC\hookrightarrow \wcC$ is an exact functor \cite{BD}*{Theorem~5.40}.
\end{rmk}


\begin{ex}
If $\cC$ is a finite $\Bbbk$-linear abelian category \cite{EGNO}*{Definition~1.8.5}, then $\cC$ is equivalent to finite-dimensional modules over a $\Bbbk$-algebra $A$. In this case, $\wcC=\lMod{A}$, the category of all $A$-modules.
\end{ex}

\subsection{Tensor products of cocompletions}\label{sec:tensor-cocompletion}

In this section, we review the external tensor product of LFP categories. We work with $R$-linear categories. 

Given two  $R$-linear categories $\cC$, $\cD$ we define $\cC\otimes_R\cD$ to be the category with objects $X\otimes Y$ where $X$, $Y$ are objects of $\cC$, $\cD$, respectively. Morphism spaces are given by tensor products over $R$, i.e.,
$$\Hom_{\cC\otimes_R\cD}(X\otimes Y,X'\otimes Y')=\Hom_\cC(X,X')\otimes_R \Hom_\cD(Y,Y').$$

There is a tensor product $\cC\boxtimes_R \cD$ of $R$-linear LFP categories $\cC$, $\cD$ which is part of a cartesian closed structure in \cite{BCJF}. It satisfies the universal property that for any cocomplete $R$-linear category $\cE$ and any $R$-linear functor 
$F\colon \cC\otimes_R \cD\to \cE$ which is cocontinuous in each component  there exists a 
$R$-linear cocontinuous functor $\hat{F}\colon \cC\boxtimes_R \cD\to \cE$ which extends $F$ under the inclusion $\cC\otimes_R \cD\hookrightarrow \cC\boxtimes_R \cD$. Moreover, $\hat{F}$ is unique up to natural isomorphism. 

The compact objects in $\cC\boxtimes_R \cD$ are finite colimits of objects in $\cC_\circ\otimes_R \cD_\circ$. Thus, a model for 
$\cC\boxtimes_R \cD$ is given by $\Fun_R^\lex(\cC_\circ^\vee\otimes_R \cD_\circ^\vee, \lMod{R})$. In particular, $\cC\boxtimes \cD$ is the cocompletion $\widehat{\cT}$ of $\cT=\cC_\circ\boxtimes^{\mathrm{f}}_R \cD_\circ$, where $\boxtimes^{\mathrm{f}}_R$ denotes the Kelly tensor product \cite{Kel}*{Section~6.5}, the cocompletion of $\cC_\circ\otimes_R \cD_\circ$ under \emph{finite} colimits (enriching over $\lMod{R}$).

More generally, if  $\cC$ and $\cD$ are compactly generated $R$-linear categories, it makes sense to consider $\wcC\boxtimes_R \wcD$, the full subcategory of $\Fun_R(\cC_\circ^\vee\otimes_R \cD_\circ^\vee, \lMod{R})$ which are left exact in both components, also equivalent to the cocompletion of $\cC_\circ\boxtimes^{\mathrm{f}}_R\cD_\circ$. Then, by \cite{AR}*{1.49}, $\wcC\boxtimes_R \wcD$ is a LFP category. The functor $\wcC\otimes_R \wcD\to \wcC\boxtimes_R \wcD$ is cocontinuous  in both components and  $\wcC\boxtimes \wcD$ satisfies the following universal property.  
For any cocomplete category $\cE$ and any functor $\cC_\circ\otimes_R \cD_\circ\to \cE$ which is $R$-linear and right exact in both compontents there exists an extension to an $R$-linear cocontinuous functor $\wcC\boxtimes_R \wcD\to \cE$ which is unique up to natural isomorphism.

\begin{ex}
If $\cC=\lMod{A}$ and $\cD=\lMod{B}$ for $R$-algebras $A,B$, then $\cC\boxtimes_R \cD\simeq \lMod{A\otimes_R B}$.
\end{ex}

If it is clear from context, we will omit the subscript  $R$ from tensor products, i.e, denote $\otimes=\otimes_R$ and $\boxtimes=\boxtimes_R$.

\subsection{Cocompletions of \texorpdfstring{$R$}{R}-linear tensor categories}

In this section, we generalize \cite{Lyu3}*{Section~3.4} to equip the cocompletion $\wcC$ of an $R$-linear tensor category $\cC$ with the structure of a monoidal category with cocontinuous tensor product. We will use the following class of monoidal categories.

\begin{defn}\label{def:tensor}
Let $R$ be a commutative ring. An \emph{$R$-linear tensor category} is a monoidal category which is: 
\begin{enumerate}
\item[(i)] $R$-linear, additive, and Karoubian (i.e.~idempotent complete),
\item[(ii)] tensored and cotensored over $\lMod{R}$,
\item[(iii)] compactly generated,
\item[(iv)] equipped with a tensor product $\otimes \colon \cC\otimes \cC\to \cC$ that is cocontinuous in both components.
\end{enumerate}
\end{defn}

We note that the assumptions used here are weaker than the concept of tensor category in \cite{EGNO}, where $\cC$ is required to be abelian and rigid, having left and right duals. In particular, in \cite{EGNO}, the tensor product is exact rather than just right exact.

\begin{prop}\label{prop:wcC}
If $\cC$ is an $R$-linear tensor category, then so is $\wcC$, and the canonical fully faithful functor $\cC\to \wcC$ is a right exact functor of monoidal categories.
\end{prop}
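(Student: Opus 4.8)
The plan is to transport the monoidal structure of $\cC$ along the Yoneda embedding $h\colon\cC\hookrightarrow\wcC$ using the universal property of the cocompletion. First I would dispose of conditions (i)--(iii) of \Cref{def:tensor} for $\wcC$ by invoking the LFP theory: by \cite{AR} the cocompletion $\wcC$ is locally finitely presentable, hence complete and cocomplete, $R$-linear, additive, and idempotent complete (giving (i) and (iii)); being complete and cocomplete and $R$-linear, it is automatically tensored and cotensored over $\lMod R$, since one may form the copower $V\otimes_R X$ and the power $\iHom_R(V,X)$ as the relevant colimits and limits of copies of $X$ indexed by a presentation of $V$ (giving (ii)). It therefore remains to equip $\wcC$ with a tensor product satisfying (iv), together with its coherence data, and to show that $h$ is a right exact strong monoidal functor.

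To build the tensor product I would consider the bifunctor $\cC_\circ\otimes_R\cC_\circ\to\wcC$, $(X,Y)\mapsto h(X\otimes Y)$. Since $\otimes$ is cocontinuous in each variable on $\cC$ and $h$ is right exact, this bifunctor is right exact in each variable. By the universal property of the external tensor product $\wcC\boxtimes_R\wcC$ recorded in \Cref{sec:tensor-cocompletion}, it extends to a cocontinuous functor $\wcC\boxtimes_R\wcC\to\wcC$, that is, to a bifunctor $\widehat\otimes$ on $\wcC$ cocontinuous in each variable, which is (iv); concretely $X\widehat\otimes Y=\colim_{i,j}h(X_i\otimes Y_j)$ for presentations $X=\colim_i X_i$, $Y=\colim_j Y_j$ by compact objects. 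The associativity and unit constraints are then inherited from $\cC$ by the same principle, using the standard enhancement of this universal property to natural transformations: the equivalence between cocontinuous functors $\wcC^{\boxtimes m}\to\wcC$ and functors $\cC_\circ^{\times m}\to\wcC$ that are right exact in each variable (the two-variable case of \Cref{sec:tensor-cocompletion}, iterated). The associator and unitors of $\cC$, restricted to $\cC_\circ$, are natural isomorphisms between the relevant composites of $\otimes$ and $h$, so they extend to natural transformations $\widehat\alpha,\widehat\lambda,\widehat\rho$ of the corresponding cocontinuous functors; each is invertible because a natural transformation of cocontinuous functors is an isomorphism as soon as it is one on the compact generators. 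The pentagon and triangle axioms are equalities of natural transformations of cocontinuous functors on $\wcC^{\boxtimes 4}$, respectively $\wcC^{\boxtimes 3}$, whose two sides extend the equal composites holding on $\cC_\circ$, so by uniqueness of extensions they agree on all of $\wcC$; with unit object $h(\one)$ this makes $\wcC$ an $R$-linear tensor category.

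Finally I would verify that $h$ is strong monoidal. The coherence isomorphism $h(X)\widehat\otimes h(Y)\cong h(X\otimes Y)$ holds by construction for $X,Y\in\cC_\circ$, and I would extend it to arbitrary $X,Y\in\cC$ as follows. Writing $X=\colim_i X_i$ and $Y=\colim_j Y_j$ as directed colimits of compact objects in $\cC$, cocontinuity of $\otimes$ on $\cC$ gives $X\otimes Y=\colim_{i,j}(X_i\otimes Y_j)$ in $\cC$; since every compact object $C$ has $\Hom_\cC(C,-)$ commuting with directed colimits, $h$ preserves this directed colimit, and hence $h(X\otimes Y)=\colim_{i,j}h(X_i\otimes Y_j)=h(X)\widehat\otimes h(Y)$, the last identification using cocontinuity of $\widehat\otimes$. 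Compatibility of this isomorphism with the associativity and unit constraints follows from the corresponding compatibilities in $\cC$, and the right exactness and full faithfulness of $h$ are already recorded in \Cref{sec:cocompletion}.

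The step I expect to be the main obstacle is the second one: making precise, in the several-variable form needed for the coherence diagrams, the principle that cocontinuous functors and the natural transformations between them are determined by their restriction to the compact generators. This density-and-uniqueness mechanism underlies both the construction of $\widehat\otimes$ and the verification of the pentagon and triangle axioms, and keeping careful track of the iterated external tensor products $\wcC^{\boxtimes m}$ and their universal properties is where the argument requires the most care.
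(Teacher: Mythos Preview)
Your proposal is correct and follows essentially the same strategy as the paper: inherit (i)--(iii) from the LFP theory, extend the tensor product to $\wcC$ via the universal property of the cocompletion (arriving at the same formula $X\widehat\otimes Y=\colim_{i,j}X_i\otimes Y_j$), and let the coherence data extend from $\cC$. The paper extends $\otimes$ slot by slot rather than through the external product $\wcC\boxtimes_R\wcC$, and is much terser about coherence (simply asserting that the associativity and unitality isomorphisms ``naturally extend''), whereas you spell out the density-and-uniqueness mechanism and the strong monoidality of $h$; but these are differences of detail rather than of method.
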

\begin{proof}
Properties (i)--(iii) are inherited by the cocompletion.
It remains to check (iv) for $\wcC$. Fix an object $X$ of $\cC$. Then the functor $(-)\otimes X\colon \cC\to \cC\hookrightarrow \wcC$ extends to a cocontinuous functor $(-)\otimes X\colon \wcC\to \wcC$ by the universal property of the cocompletion. The same argument works for the second tensor slot. Together, we obtain an extension of the tensor product to a functor 
$$\otimes\colon \wcC\otimes \wcC\to \wcC.$$
Following \cite{Lyu3}*{Section~3.4}, we define the tensor product of objects $X=\colim_{i\in I}X_i$ and $Y=\colim_{j\in J}Y_j$. The product $I\otimes J$ is a directed poset with $(i,j)\leq (i',j')$ if and only if $i\leq i'$ and $j\leq j'$. Thus, we can define  
$$X\otimes Y=\colim_{(i,j)\in I\times J}X_i\otimes Y_j.$$
The associativity and unitality isomorphisms and their coherences naturally extend to $\wcC$. 
\end{proof}

In the abelian case, we obtain from \cite{BD}*{Theorem~5.40} that $\wcC$ is a Grothen\-dieck category. The above proposition now yields the following.

\begin{cor}
If $\cC$ is an abelian monoidal category with right exact tensor product, then $\wcC$ is an abelian monoidal category with right exact tensor product and $\cC\hookrightarrow \wcC$ is an exact functor.
\end{cor}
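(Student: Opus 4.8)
The plan is to obtain this corollary as a direct synthesis of two facts already at our disposal: the purely abelian-categorical input of \cite{BD}*{Theorem~5.40} and the monoidal input of Proposition~\ref{prop:wcC}. Neither ingredient requires new work, so the proof amounts to assembling them and checking their compatibility.

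First I would invoke the hypothesis that $\cC$ is abelian together with the standing assumption that it is compactly generated, and apply \cite{BD}*{Theorem~5.40} exactly as recorded in the remark preceding Proposition~\ref{prop:wcC}. This yields two conclusions: that the cocompletion $\wcC$ is a Grothendieck category, hence in particular abelian and cocomplete, and that the canonical fully faithful embedding $\cC\hookrightarrow\wcC$ is an \emph{exact} functor. These account for the abelian content of the transferred hypothesis as well as for the final clause on exactness of the inclusion. Next I would apply Proposition~\ref{prop:wcC} to the underlying $R$-linear tensor category $\cC$, which equips $\wcC$ with a monoidal structure whose tensor product is cocontinuous in each variable and for which the inclusion $\cC\to\wcC$ is a right exact functor of monoidal categories.

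It then remains only to reconcile the two structures on $\wcC$. Since the tensor product furnished by Proposition~\ref{prop:wcC} is cocontinuous, it preserves all colimits; in an abelian category the right exactness of an additive functor amounts to preservation of cokernels together with finite coproducts, both of which are particular colimits. Consequently the tensor product is automatically right exact with respect to the abelian structure supplied by \cite{BD}, yielding the assertion that $\wcC$ is an abelian monoidal category with right exact tensor product. The last clause then follows from the exactness of the embedding established in the first step; combined with its monoidal structure from Proposition~\ref{prop:wcC}, the inclusion is in fact an exact monoidal functor. The only genuine point requiring care, which I would treat as the main (though minor) obstacle, is verifying that the abelian structure produced by \cite{BD} and the monoidal structure produced by Proposition~\ref{prop:wcC} are mutually compatible rather than two unrelated structures on the same underlying category; this compatibility is precisely the implication that cocontinuity entails right exactness, and no further coherence check or analysis of the interaction between kernels and the tensor product is needed.
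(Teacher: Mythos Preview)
Your proposal is correct and matches the paper's approach exactly: the paper simply records that \cite{BD}*{Theorem~5.40} gives the Grothendieck (hence abelian) structure on $\wcC$ together with exactness of the inclusion, and then states the corollary as an immediate consequence of Proposition~\ref{prop:wcC}. Your write-up fills in the one-line justification that cocontinuity of the tensor product implies right exactness in the abelian setting, which the paper leaves implicit.
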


We are mostly interested in examples coming from Hopf algebras over $R$.

\begin{ex}\label{ex:Cocompletion-H-projR}
Let $A$ be an $R$-algebra with a filtration $A=\cup_{i\in \mN}A_i$, with $A_i\subset A_j$, where each $A_i$ is finitely generated projective over $R$ as in Example~\ref{ex:Cocompletion-A-projR}. Then $\cC=\flRmod{H}{R}$ is an $R$-linear tensor category with the tensor product $\otimes_R$, where $H$ acts via the coproduct. It follows that $\wcC$ is an $R$-linear tensor category and equivalent to $\lMod{H}$. 
\end{ex}

Passing to cocompletions is $2$-functorial. In particular, if $\cC$ has a braiding then so does the cocompletion $\wcC$.

\bibliography{main}
\bibliographystyle{amsrefs}%

\end{document}